\def\newaliasedtheorem#1[#2]#3{
  \newaliascnt{#1@alt}{#2}
  \newtheorem{#1}[#1@alt]{#3}
  \expandafter\newcommand\csname #1@altname\endcsname{#3}
}
\numberwithin{equation}{section}
\theoremstyle{plain}
\newtheorem{theorem}{Theorem}[section]
\theoremstyle{definition}
\theoremstyle{remark}
\newcommand{\setN}{\mathbb{N}}
\newcommand{\setQ}{\mathbb{Q}}
\newcommand{\setR}{\mathbb{R}}
\newcommand{\eps}{\varepsilon}
\let\altphi\phi
\let\phi\varphi
\let\varphi\altphi
\let\altphi\undefined
\newcommand{\abs}[1]{\left\lvert#1\right\rvert}
\newcommand{\norm}[1]{\left\lVert#1\right\rVert}
\newcommand{\weakto}{\rightharpoonup}
\newcommand{\Weakto}[1]{\xrightharpoonup{#1}}
\newcommand{\Id}{\mathrm{Id}}
\let\div\undefined
\DeclareMathOperator{\div}{div}
\DeclareMathOperator{\sign}{sign}
\newcommand{\plchldr}{\,\cdot\,}
\newcommand{\di}{\mathop{}\!\mathrm{d}}
\newcommand{\didi}[1]{\frac{\mathrm{d}}{\di#1}}
\newcommand{\bs}{{\rm bs}}
\newcommand{\loc}{{\rm loc}}
\newcommand{\ev}{{\rm ev}}
\newcommand{\sym}{{\rm sym}}
\newcommand{\res}{\mathop{\hbox{\vrule height 7pt width .5pt depth 0pt
\vrule height .5pt width 6pt depth 0pt}}\nolimits}
\newcommand{\restr}{\raisebox{-.1618ex}{$\bigr\rvert$}}
\DeclareMathOperator{\supp}{supp}
\newcommand{\rmD}{{\rm D}}
\newcommand{\Ch}{{\sf Ch}}
\DeclareMathOperator{\Lip}{Lip}
\newcommand{\Lipa}{\operatorname{Lip}_{\rm a}}
\DeclareMathOperator{\Lipb}{Lip_{\rm b}}
\DeclareMathOperator{\Lipbs}{Lip_\bs}
\DeclareMathOperator{\Liploc}{Lip_\loc}
\newcommand{\Cb}{C_{\rm b}}
\newcommand{\Cbs}{C_\bs}
\newcommand{\Cbsx}{C_{\rm bsx}}
\newcommand{\haus}{\mathscr{H}}
\newcommand{\leb}{\mathscr{L}}
\newcommand{\Meas}{\mathscr{M}}
\newcommand{\Prob}{\mathscr{P}}
\newcommand{\Borel}{\mathscr{B}}
\newcommand{\bb}{{\boldsymbol{b}}}
\newcommand{\sbb}{{\boldsymbol{b}}}
\newcommand{\cc}{{\boldsymbol{c}}}
\newcommand{\XX}{{\boldsymbol{X}}}
\newcommand{\ppi}{{\boldsymbol{\pi}}}
\newcommand{\dist}{\mathsf{d}}
\newcommand{\meas}{\mathfrak{m}}
\newcommand{\Algebra}{{\mathscr A}}
\newcommand{\Der}{{\rm Der}}
\newcommand{\Div}{{\rm Div}}
\DeclareMathOperator{\RCD}{RCD}
\begin{document}

\renewcommand{\sectionautorefname}{Section}

\title{Weak and strong convergence of derivations and \\ stability of flows with respect to MGH convergence}
\author{Luigi Ambrosio
\thanks{Scuola Normale Superiore, \url{luigi.ambrosio@sns.it}} \and
Federico Stra
\thanks{Scuola Normale Superiore, \url{federico.stra@sns.it}} \and
Dario Trevisan
\thanks{Universit\`a di Pisa, \url{dario.trevisan@unipi.it}}}
\maketitle

\begin{abstract}
This paper is devoted to the study of weak and strong convergence of derivations, and of the flows associated to them,
when dealing with a sequence of metric measure structures $(X,\dist,\meas_n)$, $\meas_n$ weakly convergent to $\meas$.
In particular, under curvature assumptions, either only on the limit metric structure $(X,\dist,\meas)$ or on the whole sequence of metric
measure spaces, we provide several stability results.
\end{abstract}

\tableofcontents

\section{Introduction}

In this paper we study convergence of vector fields, more precisely derivations, and Lagrangian flows with respect to measured Gromov-Hausdorff 
convergence. The literature on
the topic of MGH convergence is very wide (see for instance  \cite{GigliMondinoSavare13}, \cite{Gromov}, \cite{Sturm06}, \cite{Shioya}, \cite{Villani}) 
and the convergence problems can be attacked from different points of view, depending
also on the richness of the structure under consideration. Our motivations come from two different directions: the first one is a deeper
investigation of the convergence of gradient derivations, particularly in the case when all spaces $(X_n,\dist_n,\meas_n)$ belong to
the same class $\RCD(K,\infty)$ of \cite{AmbrosioGigliSavare14}. In this respect, our results provide a kind of local version of the Mosco convergence of the
$2$-Cheeger energies estabilished in \cite{GigliMondinoSavare13}. This local version will play a role in a forthcoming paper \cite{AmbrosioHonda16}, where Mosco convergence is proved
for all $p$-Cheeger energies, also with stability results for $BV$ functions, Cheeger constants and Hessians.
The second motivation comes as a natural complement of \cite{AmbrosioTrevisan14}, where well-posedness of flows associated to sufficiently
regular derivations $\bb$ (see \autoref{thm:well-posedness-rcd}) 
is proved for the first time in a nonsmooth setting, which includes $\RCD(K,\infty)$ metric measure spaces $(X,\dist,\meas)$. It is
then natural to investigate the stability question (as in the Euclidean theory \cite{AmbrosioCrippa13}), assuming that the derivations depend
on $n$ and that the reference measures $\meas_n$ are variable.

In this paper, when dealing with these problems, we systematically adopt the so-called extrinsic point of view in MGH convergence. Namely,
up to an isometric embedding, we assume that neither $X$ nor $\dist$ depend on $n$. We assume then that $\meas_n$
are locally uniformly finite and nonnegative Borel measures in $(X,\dist)$ which weakly converge to $\meas$, namely
$\int_X v\di\meas_n\to\int_X v\di\meas$ for all $v\in C_\bs(X)$, where $C_\bs(X)$ is the space of bounded continuous
functions with bounded support. 
As illustrated for instance in \cite{GigliMondinoSavare13} (where several notions of convergence are
carefully compared, for sequences of pointed metric measure spaces), this point of view is not really restrictive, and basically unavoidable 
when treating Lagrangian
questions such as the convergence of flows. Indeed, some coupling between points in the different spaces is necessary to prove
convergence of paths to paths, and the simplest way to achieve this is to embed all metric structures into a common one.
Still in connection with the metric structure $(X,\dist)$, we assume only completeness and separability; dropping
local compactness assumptions is useful to include in the theory all $\RCD(K,\infty)$ spaces.

Now we pass to a more detailed description of the content of this paper. It consists of a first part where we recall the basic properties
of derivations and two more parts, an ``Eulerian'' one, dealing with the convergence of derivations and particularly of gradient derivations,
and a ``Lagrangian'' part, dealing with the convergence of flows, that can be read almost independently.

\paragraph{\autoref{pt:preliminary}.} In the seminal paper \cite{Weaver} the concept of derivation is used to build a good notion of tangent
bundle in metric spaces, the main idea being to describe the bundle implicitly through the collection of its ($\meas$-measurable) sections.
These sections, called derivations, are linear maps $\bb:\Lipb(X)\to L^0(X,\meas)$ which satisfy suitable continuity and locality
properties. In this paper we almost completely adopt the point of view of \cite{Gigli}, where besides linearity one assumes the validity
of the inequality
\begin{equation}\label{eq:intro0}
|\bb(f)|\leq h|\rmD f|\qquad\text{$\meas$-a.e.~in $X$, for all $f\in\Lipb(X)$.}
\end{equation}
The only difference is that, since we are dealing with a sequence of measures, we keep $\Lipb(X)$ as
domain of derivations, to have $\meas$-independent domains, instead of the Sobolev space considered in  \cite{Gigli}.
In \cite{Gigli} a systematic analysis of first and second order calculus based on this notion is made, including in particular
the $L^2$ duality between tangent and cotangent bundle, see \autoref{rdual}.  The quantity $|\rmD f|$ in the right hand side of \eqref{eq:intro0} 
is the minimal relaxed slope of \cite{Cheeger}, which provides integral representation to
Cheeger's energy $\Ch$, see \autoref{appendix:slopes} for a quick introduction to this concept.
By duality, the minimal $h$ in \eqref{eq:intro0} is denoted by $|\bb|$ (strictly speaking, we should
use the notation $\Ch_\meas$, $|\bb|_\meas$ to stress the dependence of these concepts on $\meas$). The $L^2$ duality estabilished in \cite{Gigli} is
particularly useful for us to read the Hilbertian character of the norm $|\bb|$ in terms of the cotangent bundle,
i.e.~in terms of quadraticity of Cheeger's energy. According to the theory developed in \cite{AmbrosioGigliSavare13} and \cite{AmbrosioGigliSavare14}, this is
the analytically most convenient formulation. 

In order to treat weak convergence of derivations from the sequential point of view it is technically useful to consider a countable 
algebra $\Algebra\subset\Lipb(X)$ of ``test'' functions. In the spirit of Gromov's reconstruction theorem \cite{Gromov} and many other
results of the theory (see also \cite{Keith} in connection with Cheeger's coordinates in PI spaces and the more recent work 
\cite{CheegerKleinerSchioppa}) it is natural to consider the
algebra finitely generated by truncations of distance functions. To make it separable, we consider a countable dense set $D\subset X$ and define
$\Algebra$ as in \eqref{eq:setD}, also considering the subalgebra $\Algebra_\bs$ of functions with bounded support. In connection
with this choice, denoting by $H^{1,2}(X,\dist,\meas)$ the domain of $\Ch$, we shall also need this approximation result
(where $\Lipa $ stands for the asymptotic Lipschitz constant, see \eqref{eq:deflipa}): 
\textit{for all $f$ in $H^{1,2}(X,\dist,\meas)$, there exist $f_n\in\Algebra_\bs$
with $f_n\to f$ and $\Lipa (f_n)\to |\rmD f|$ in $L^2(X,\meas)$.} 

Its proof, given in \autoref{appendix:approximation}, is a further refinement of the techniques and of the results developed in 
\cite{AmbrosioGigliSavare13} (see also \cite{AmbrosioColomboDiMarino}), where the approximating functions where chosen, as in
\cite{Cheeger}, in the larger class $\Lipb(X)\cap L^2(X,\meas)$.

\paragraph{\autoref{pt:derivations}.} In this part we discuss weak convergence of derivations and criteria for strong convergence. Even though some of our
results deal with time-dependent derivations (more natural for the study of flows, in \autoref{pt:flows}), we present in this introduction only
the autonomous case. We say that $\bb_n$, derivations in $(X,\dist,\meas_n)$, {\it weakly} converge in duality with
$\Algebra_\bs$ to $\bb$, derivation in $(X,\dist,\meas)$, and write $\bb_n\Weakto{\Algebra_\bs}\bb$, if 
\begin{equation}\label{eq:firstweak}
\lim_{n\to\infty} \int_X \bb_n(f)v\di\meas_n=\int_X\bb(f)v\di\meas\qquad\forall f\in\Algebra_\bs,\,\,v\in\Cb(X).
\end{equation}

Recall that the divergence $\div\bb$ of a derivation is defined by duality, via the formula
$\int_X f\div\bb\di\meas=-\int_X\bb(f)\di\meas$ for all $f\in\Lip_\bs(X)$
(as for $\Ch$ and $|\bb|$, we should use the notation $\div_{\meas}$, to stress its dependence on $\meas$).
In presence of uniform bounds on $|\bb_n|$ and on their divergences, \autoref{thm:compactness} provides a sequential compactness
result totally independent of regularity assumptions on the metric measure structures. 
Due to the close relation between vector fields with bounds on divergence and normal currents (see in particular the
discussion in \cite[Appendix~A]{PaoliniStepanov}), this result is reminiscent
of the compactness result for normal currents in \cite{Ambrosio-Kirchheim}. Notice that the above mentioned approximation result, 
provided in \autoref{appendix:approximation},  is necessary to pass to
the limit in the inequality $|\bb_n(f)|\leq |\bb_n|_{\meas_n}|\rmD f|_{\meas_n}$, using as intermediate step the weaker inequality with the asymptotic
Lipschitz constant in the right hand side. 

When we deal with gradient derivations the bounds on divergence correspond to Laplacian bounds, which could be too restrictive for
some applications. For this reason, we exploit the regularizing properties of the heat semigroup $P_t$.  
Indeed, under suitable regularity assumptions (only) on the limit metric measure structure $(X,\dist,\meas)$,
which are satisfied by all $\RCD(K,\infty)$ spaces, it is technically convenient to replace in \eqref{eq:firstweak} the algebra $\Algebra_\bs$ by a more regular 
class of test functions, namely $P_{\setQ_+}\Algebra_\bs := \{P_t f:\ t\in\setQ_+,\,\,f\in\Algebra_\bs\}$, letting $v$ vary in $C_\bs(X)$ (since the support of
$P_tf$ may have infinite measure, unless all measures $\meas_n$ are finite). With this new notion of weak convergence, for which
we use the notation $\bb_n\Weakto{P\Algebra_\bs}\bb$, we provide another
sequential compactness result, see \autoref{thm:compa2}, free of divergence bounds;  in 
\autoref{rem:compaweak} we show that, under uniform bounds on divergence, the two notions of convergence are equivalent.

In addition, for the convergence in duality with $P_{\setQ_+}\Algebra_\bs$, we provide a criterion for strong $L^p$ convergence, namely
\begin{equation}\label{eq:firststrong}
\lim_{n\to\infty} \int_X |\bb_n(f)|^p\di\meas_n\leq\int_X|\bb(f)|^p\di\meas\qquad\forall f\in P_{\setQ_+}\Algebra_\bs,
\end{equation}
under the assumption that there is no loss of norm in the limit, namely
$$
\limsup_{n\to\infty}\int_X|\bb_n|^p\di\meas_n\leq\int_X|\bb|^p\di\meas<\infty.
$$
The proof of this criterion, given in \autoref{thm:strong-stability-hilbert}, is probably the most technical part of this paper: its proof follows closely
ideas from the theory of Young measures (\cite{Valadier}, \cite[Section~5.4]{AmbrosioGigliSavare08}), with the extra difficulties due to the fact that 
we do not have a pointwise description of the tangent bundle (but, via a suitable concept of pre-derivation, we provide a kind of replacement for it).

Finally, we conclude this part by discussing convergence of gradient derivations. If $f_n\in H^{1,2}(X,\dist,\meas_n)$
strongly converge in $H^{1,2}$ to $f\in H^{1,2}(X,\dist,\meas)$ (in particular $\limsup_n\Ch_n(f_n)\leq\Ch(f)$)  
we prove in \autoref{thm:strong-convergence-gradients}, under a Mosco convergence assumption that the induced derivations $\bb_{f_n}$ strongly converge in $L^2$ to $\bb_f$, 
meaning that $\bb_{f_n}(a)$ converge in $L^2$ to $\bb_f(a)$ for all  $a\in P_{\setQ_+}\Algebra_\bs$. The Mosco convergence  assumption is fulfilled by sequences of $\RCD(K,\infty)$ spaces, as in \cite{GigliMondinoSavare13}; these results will play an important 
role in \cite{AmbrosioHonda16}.

\paragraph{\autoref{pt:flows}.} In this part we discuss the stability of flows w.r.t.~strong convergence of derivations. As in \cite{AmbrosioTrevisan14}, we say that
$\XX(t,x)$ is a regular flow relative to a possibly time dependent derivation $\bb_t$, $t\in (0,T)$, if $\XX(0,x)=x$, $\XX(\plchldr,x)$ is absolutely continuous
in $[0,T]$ for $\meas$-a.e.~$x$ and solves the ODE $\gamma'=\bb_t\circ\gamma$ in the following weak sense:
\begin{equation}\label{eq:intro3}
\didi t f\circ\XX(t,x)=\bb_t(f)(\XX(t,x))\quad\text{for $\leb^1\times\meas$-a.e.~$(t,x)\in (0,T)\times X$,} 
\end{equation}
for all $f\in\Lipb(X)$. The adjective ``regular'' refers, as in the Euclidean theory (see \cite{AmbrosioCrippa13} and the references therein) to the non-concentration
condition $\XX(t,\plchldr)_\#\meas\leq C\meas$ for all $t\in (0,T)$, with $C=C(X,\meas)$. When the limit structure $(X,\dist,\meas)$ is
a $\RCD(K,\infty)$ space and  the regularity assumptions on $\bb$ of 
\autoref{thm:well-posedness-rcd} hold (which ensure uniqueness of the regular flow $\XX$ relative to $\bb_t$), we are 
able to provide in \autoref{thm:stabflow_curv} a convergence result for regular flows $\XX_n$ relative to $\bb_{n,t}$, assuming strong convergence of
$\bb_{n,t}$ to $\bb_t$, uniform growth bounds and $\sup_nC(\XX_n,\meas_n)<\infty$. Here, convergence is undertood as convergence in measure,
namely convergence of the $C([0,T];X)$-valued maps $x\mapsto\XX_n(\plchldr,x)$ to the map $x\mapsto\XX(\plchldr,x)$; the notion of convergence in measure
can be adapted to our case, where even the reference measures are variable.

The strategy is to prove, via tightness estimates, convergence to a 
regular generalized flow $\ppi$ relative to $\bb_t$ (see \autoref{dregflow} for this more general concept of flow) and then use the regularity
of $\bb_t$ to extract a ``deterministic'' flow out of it. The key step, where strong convergence of the derivations $\bb_{n,t}$ is involved, is that the ODE
condition \eqref{eq:intro3} passes to the limit; in the proof of this we use a new principle (see \autoref{prop:conc}) based on the continuity equation
which would lead also to the simplification of some proofs of the Euclidean theory (for instance,  \cite[Theorem~12]{AmbrosioCrippa13}).

\paragraph{Acknowledgments.}
The third author has been supported by project PRA\_2016\_41 from Pisa University. All authors are members of the GNAMPA research
group of the Istituto Nazionale di Alta Matematica (INdAM).

\part{Preliminary results}\label{pt:preliminary}

\section{Notation}

\paragraph{Metric concepts.} In a metric space $(X,\dist)$, we denote by $B_r(x)$ and $\bar{B}_r(x)$ the open and closed
balls respectively, by $\Cbs(X)$ the space of continuous functions with bounded support, by $\Lip_\bs(X)\subset \Cbs(X)$  the subspace of Lipschitz 
functions. We use the notation $\Cb(X)$ and $\Lipb(X)$ for bounded continuous and bounded Lipschitz functions respectively.

For a function $f:X\to\setR$ we denote by $\Lip(f)\in [0,\infty]$ its Lipschitz constant. We also define the asymptotic Lipschitz constant by
\begin{equation}\label{eq:deflipa}
\Lipa f(x) = \inf_{r>0} \Lip \left( f\restr_{B_r(x)} \right) =
\lim_{r\to0^+} \Lip \left( f\restr_{B_r(x)} \right),
\end{equation}
which is upper semicontinuous.
If $I\subset\setR$ is an interval, we denote by $AC(I;X)\subset C(I;X)$ the 
space of absolutely continuous maps w.r.t.~$\dist$, satisfying
\begin{equation}\label{eq:minimalg}
\dist(\gamma(s),\gamma(t))\leq\int_s^tg(r)\di r\qquad\text{for all $s,\,t\in I$ with $s\leq t$}
\end{equation}
for some (nonnegative) $g\in L^1(I)$. We denote by $\ev_t:C(I;X)\to X$ the evaluation map at time $t$, i.e.~$\ev_t(\gamma)=\gamma(t)$.

The metric derivative $|\dot\gamma|:\ring{I}\to [0,\infty]$ of $\gamma\in AC(I;X)$ is the Borel map defined by
$$
|\dot\gamma|(t):=\limsup_{s\to t}\frac{\dist(\gamma(s),\gamma(t))}{|s-t|}
$$
and it can be proved (see for instance \cite[Theorem~1.1.2]{AmbrosioGigliSavare08}) that the $\limsup$ is a limit $\leb^1$-a.e.~in $I$, 
that $|\dot\gamma|\in L^1(I)$ and that $|\dot\gamma|$ 
is the smallest $L^1$ function with the property \eqref{eq:minimalg}, up to $\leb^1$-negligible sets.

\paragraph{The metric algebras $\Algebra$, $\Algebra_\bs$.} In the sequel we associate to any separable metric space $(X,\dist)$ a countable dense set $D \subset X$ and the smallest 
set $\Algebra\subset\Lipb(X)$ containing
\begin{equation}\label{eq:setD}
\min\{\dist(\plchldr,x),k\}\quad\text{with $k\in\setQ\cap [0,\infty]$, $x\in D$,}
\end{equation}
which is a vector space over $\setQ$ and is stable under products and lattice operations.
It is a countable set and it depends only on the choice of $D$ (we do not emphasize this dependence 
in our notation). We shall also consider the subalgebra $\Algebra_\bs$ of functions with bounded support.

\paragraph{Measure-theoretic notation.} The Borel $\sigma$-algebra of a metric space $(X,\dist)$ is denoted by $\Borel(X)$.
The Borel signed measures with finite total variation are denoted by $\Meas(X)$, while we use the notation $\Meas^+(X)$, $\Meas^+_\loc(X)$, $\Prob(X)$ for 
nonnegative finite Borel measures, Borel measures which are finite on bounded sets and Borel probability measures. For $E\in\Borel(X)$
the restriction operator $\mu\mapsto\mu\res E$ is defined by $\mu\res E(B)=\mu(B\cap E)$ for all $B\in\Borel(X)$.

We use the standard notation $L^p(X,\meas)$, $L^p_\loc(X,\meas)$ for the $L^p$ spaces
when $\meas$ is nonnegative, including also the case $p<1$ (when $p=0$ the spaces $L^0(X,\meas)=L^0_\loc(X,\meas)$ correspond to the class of
real-valued $\meas$-measurable functions). Notice that, in this context where no local compactness assumption is made, $L^p_\loc$ means
$p$-integrability on bounded subsets.

Given metric spaces $(X,\dist_X)$ and $(Y,\dist_Y)$ and a Borel map $f:X\to Y$, we denote by $f_\#$ the induced push-forward
operator, mapping $\Prob(X)$ to $\Prob(Y)$, $\Meas^+(X)$ to $\Meas^+(Y)$ and, if the preimage of bounded sets is bounded, 
$\Meas^+_\loc(X)$ to $\Meas^+_\loc(Y)$. Notice that, for all $\mu\in\Meas^+(X)$, $f_\#\mu$ is well defined also if $f$ is $\mu$-measurable.

\begin{definition}[Metric measure space]
A \emph{metric measure space} is a triple $(X,\dist,\meas)$, where $(X,\dist)$
is a complete and separable metric space and $\meas\in\Meas_\loc^+(X)$. 
\end{definition}

\paragraph{Convergence of functions and measures.} We say that $f_n\in\Lipb(X)$ converge in the \emph{flat sense} to $f\in\Lipb(X)$ if $f_n\to f$ pointwise
in $X$ and $\sup_n(\sup|f_n|+\Lip(f_n))<\infty$. If pointwise convergence occurs only $\meas$-a.e., we say that $f_n\to f$ in the
$\meas$\emph{-flat sense}.

We say that $\meas_n\in\Meas_\loc(X)$ weakly converge to $\meas\in\Meas_\loc(X)$ if
$\int_X v\di\meas_n\to\int_X v\di\meas$ as $n\to\infty$ for all $v\in \Cbs(X)$. When all the measures $\meas_n$ as well as $\meas$
are probability measures, this is equivalent to requiring that $\int_X v\di\meas_n\to\int_X v\di\meas$ as $n\to\infty$ for all $v\in\Cb(X)$. 
We shall also use the following well-known proposition.

\begin{proposition}\label{prop:passa_al_limite} If $\meas_n$ weakly converge to $\meas$ in $\Meas^+_\loc(X)$ and if
$$
\limsup_{n\to\infty}\int_X\Theta\di\meas_n<\infty
$$
for some Borel $\Theta:X\to [0,\infty]$, then 
\begin{equation}\label{eq:passa_al_limite}
\lim_{n\to\infty}\int_X v\di\meas_n=\int_X v\di\meas
\end{equation} 
for all $v:X\to\setR$ continuous with $\lim_{\dist(x,\bar x)\to\infty}|v|(x)/\Theta(x)=0$
for some (and thus all) $\bar x\in X$. If $\Theta:X\to [0,\infty)$ is continuous and
$$\limsup_{n\to\infty}\int_X\Theta\di\meas_n\leq\int_X\Theta\di\meas<\infty,$$ 
then \eqref{eq:passa_al_limite} holds for all $v:X\to\setR$ continuous with
$|v|\leq C\Theta$ for some constant $C$.
\end{proposition}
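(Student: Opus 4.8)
The plan is to prove both statements by the standard truncation-plus-tightness scheme, the only real subtlety being that in the first part $\Theta$ is merely Borel, so it cannot be tested directly against the weak convergence. I fix a base point $\bar x\in X$ (the growth condition at infinity does not depend on this choice, since $\dist(\cdot,\bar x)\to\infty$ if and only if $\dist(\cdot,\bar x')\to\infty$) and, for $R>0$, I use the Lipschitz cutoff $\chi_R:=\max\{0,\min\{1,R+1-\dist(\cdot,\bar x)\}\}\in\Lip_\bs(X)$, which satisfies $0\le\chi_R\le1$, $\chi_R\equiv1$ on $\bar B_R(\bar x)$ and $\supp\chi_R\subset\bar B_{R+1}(\bar x)$. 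For the relevant continuous $v$ I then split
\begin{equation*}
\int_X v\di\meas_n-\int_X v\di\meas=\Big(\int_X v\chi_R\di\meas_n-\int_X v\chi_R\di\meas\Big)+\int_X v(1-\chi_R)\di\meas_n-\int_X v(1-\chi_R)\di\meas,
\end{equation*}
the \emph{core} term tending to $0$ by weak convergence since $v\chi_R\in\Cbs(X)$ (truncating $v$ at a high level if it fails to be bounded on $\bar B_{R+1}(\bar x)$), while the two \emph{tail} terms are estimated through $\Theta$.

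For the first statement, set $M:=\limsup_n\int_X\Theta\di\meas_n<\infty$ and, given $\eps>0$, choose $R$ so large that $|v|\le\eps\Theta$ on $\{\dist(\cdot,\bar x)>R\}$, which is possible by the hypothesis $|v|/\Theta\to0$ at infinity. Since $1-\chi_R$ is supported in $\{\dist(\cdot,\bar x)>R\}$, the $\meas_n$-tail is controlled by $\big|\int_X v(1-\chi_R)\di\meas_n\big|\le\eps\int_X\Theta\di\meas_n$, whence $\limsup_n\big|\int_X v(1-\chi_R)\di\meas_n\big|\le\eps M$. The $\meas$-tail is the delicate point: I would \emph{not} bound it by $\int_X\Theta\di\meas$ (which need not be finite for Borel $\Theta$), but instead apply lower semicontinuity to the nonnegative \emph{continuous} function $w:=|v|(1-\chi_R)$, which satisfies $0\le w\le\eps\Theta$. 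Writing $w$ as the increasing pointwise limit of $\min\{w,j\}\chi_j\in\Cbs(X)$, with $\chi_j$ the analogous cutoff of radius $j$, and passing to the limit in $n$ in $\int_X\min\{w,j\}\chi_j\di\meas_n\le\eps\int_X\Theta\di\meas_n$, monotone convergence gives $\int_X w\di\meas\le\liminf_n\int_X w\di\meas_n\le\eps M$; in particular $\big|\int_X v(1-\chi_R)\di\meas\big|\le\eps M$ (and $v\in L^1(\meas)$). Collecting the three contributions yields $\limsup_n\big|\int_X v\di\meas_n-\int_X v\di\meas\big|\le2\eps M$, and letting $\eps\downarrow0$ concludes.

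For the second statement $\Theta$ is continuous and nonnegative, so the same lower semicontinuity argument gives $\int_X\Theta\di\meas\le\liminf_n\int_X\Theta\di\meas_n$; together with the assumed $\limsup_n\int_X\Theta\di\meas_n\le\int_X\Theta\di\meas<\infty$ this forces the \emph{no loss of mass} $\lim_n\int_X\Theta\di\meas_n=\int_X\Theta\di\meas$. Testing the weak convergence against $\Theta\chi_R\in\Cbs(X)$ (again truncating if necessary) and subtracting then gives $\lim_n\int_X\Theta(1-\chi_R)\di\meas_n=\int_X\Theta(1-\chi_R)\di\meas$, whose right-hand side is at most $\int_{\{\dist(\cdot,\bar x)>R\}}\Theta\di\meas$ and therefore tends to $0$ as $R\to\infty$ since $\Theta\in L^1(\meas)$. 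This is exactly uniform integrability of the tails: given $\eta>0$ I fix $R$ with $\int_X\Theta(1-\chi_R)\di\meas<\eta$, so that $\int_X\Theta(1-\chi_R)\di\meas_n<\eta$ for $n$ large. Using $|v|\le C\Theta$ to bound both tail terms by $C\eta$ and the weak convergence on the core term, I obtain $\limsup_n\big|\int_X v\di\meas_n-\int_X v\di\meas\big|\le2C\eta$, and $\eta\downarrow0$ finishes the argument.

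I expect the main obstacle to be precisely the $\meas$-side tail in the first statement: because $\Theta$ is only Borel one cannot invoke $\int_X\Theta\di\meas\le\liminf_n\int_X\Theta\di\meas_n$ directly, and the trick is to transfer the required smallness to the genuinely continuous function $w=|v|(1-\chi_R)$, for which the lower semicontinuity (portmanteau) bound is available through approximation from below by functions of $\Cbs(X)$. In the second statement the corresponding role is played by the no-loss-of-mass equality for $\Theta$, which upgrades weak convergence to uniform tail integrability.
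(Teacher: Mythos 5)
The paper does not actually prove this proposition: it is stated as a ``well-known proposition'' and used as a black box, so there is no argument of the authors to compare yours against. Taken on its own, your proof is the standard cutoff-plus-portmanteau scheme and is essentially correct; in particular, your treatment of the $\meas$-side tail in the first statement --- replacing the merely Borel majorant $\Theta$ by the continuous minorant $w=|v|(1-\chi_R)\leq\eps\Theta$ and approximating $w$ from below by $\min\{w,j\}\chi_j\in\Cbs(X)$ before invoking monotone convergence --- is exactly the right move, and your derivation of the no-loss-of-mass identity for $\Theta$ in the second statement is correct.

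The one point you should not leave to a parenthesis is ``truncating $v$ at a high level if it fails to be bounded on $\bar B_{R+1}(\bar x)$''. The paper deliberately works without local compactness, so a continuous $v$ need not be bounded on the closed ball $\bar B_{R+1}(\bar x)$; then $v\chi_R\notin\Cbs(X)$ and, in the \emph{first} statement, the truncation error $\int(v-v_L)\chi_R\di\meas_n$ is controlled by nothing, since the hypothesis $|v|\leq\eps\Theta$ is only available outside a large ball. This is not a removable technicality: with $X=\ell^2$, $\meas=\delta_0$, $\meas_n=\delta_0+n^{-1}\delta_{e_n/2}$, $\Theta\equiv 1$ and $v\geq 0$ continuous with bounded support and $v(e_n/2)=n$ (disjoint bumps of growing height), all hypotheses of the first statement hold and yet $\int_X v\di\meas_n=1\not\to 0=\int_X v\di\meas$. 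So the statement needs the (surely intended, and automatic in proper spaces) extra assumption that $v$ is bounded on bounded sets, and you should either state it or observe that it holds in all the paper's applications. In the \emph{second} statement your truncation can in fact be justified without extra hypotheses: there $|v|\leq C\Theta$ globally, and splitting $\Theta=\min\{\Theta,j\}+(\Theta-j)^+$ and running your superadditivity/no-loss-of-mass argument gives $\limsup_n\int_X(\Theta-j)^+\di\meas_n\leq\int_X(\Theta-j)^+\di\meas\to 0$ as $j\to\infty$, i.e.\ uniform integrability of $\Theta$ at high levels, which bounds the truncation error of the core term; it would be worth adding this line.
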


For a sequence $(\meas_n)\subset\Meas_\loc^+(X)$ that weakly converges to $\meas\in\Meas^+_\loc(X)$, we introduce, following
the presentation in \cite{GigliMondinoSavare13}, notions of weak and strong convergence for a sequence of Borel functions 
$f_n: X \to\setR$, $n \ge 1$.  It is difficult to trace when these concepts, very natural when variable reference measures
are involved, have been introduced for the first time; good references in geometric setups are \cite{Hutchinson} and
\cite{Kuwae-Shioya}.

\paragraph{Weak convergence.} For $p \in (1, \infty]$, we say that $f_n \in L^p(X,\meas_n)$ converge to $f \in L^p(X,\meas)$ weakly 
(weakly-* if $p =\infty$) in $L^p$ if $f_n \meas_n$ weakly converge to $f \meas$ in $\Meas_\loc (X)$, with
\begin{equation}\label{eq:bound-lp-norms} 
\limsup_{n\to \infty} \left\| f_n \right\|_{L^p(X,\meas_n)} < \infty.
\end{equation}
In such a situation, one has $\left\| f \right\|_{L^p(X,\meas)} \le  \liminf_n \left\| f_n \right\|_{L^p(X,\meas_n)}$. 
Moreover, if $p\in (1,\infty)$, any sequence $f_n \in L^p(X,\meas_n)$ such that \eqref{eq:bound-lp-norms} holds
admits a subsequence weakly converging to some $f \in L^p(X,\meas)$.

We notice that, if two sequences $f_n$, $g_n \in L^p(X,\meas_n)$ weakly converge in $L^p$ respectively to $f$, $g \in L^p(X,\meas)$ and satisfy $0 \le f_n \le g_n h$, $\meas_n$-a.e.~in $X$, 
where $h: X\to\setR^+$ is bounded and upper semicontinuous, then $f \le g h$, $\meas$-a.e.~in $X$. This follows from duality with $\phi \in \Cbs(X)$, $\phi \ge 0$, and the fact that $h$ is the pointwise infimum of bounded continuous functions. 
 
\paragraph{Strong convergence.} For $p \in (1, \infty)$ we say that $f_n\in L^p(X,\meas_n)$ converge to $f$ strongly in $L^p$ if, in addition to weak convergence in $L^p$, one has 
\begin{equation}\label{eq:kadec}
\limsup_n\left\| f_n \right\|_{L^p(X,\meas_n)} \le \left\| f \right\|_{L^p(X,\meas)}.
\end{equation}  
This condition, in the standard setting of a \textit{fixed} Banach space, is reminiscent of the Kadec-Klee property, 
which yields strong convergence out of weak convergence plus convergence of norms. 
In our more general situation, one can prove that, under \eqref{eq:kadec}, weak convergence of $f_n \meas_n \in \Meas_\loc (X)$ to $f\meas$ improves to 
\begin{equation} \label{eq:duality-strong-weak-convergence}
\lim_{n \to \infty} \int_X v_n f_n \meas_n  = \int_X v f \meas, \quad \text{$\forall v_n \in L^{p'}(X,\meas_n)$ weakly converging in $L^{p'}$ to $v$,}\end{equation}
where $p'$ is the conjugate exponent of $p$. Moreover, given two sequences $f_n,\,g_n \in L^p(X,\meas_n)$ strongly converging in $L^p$
respectively to $f,\,g\in L^p(X,\meas)$, one can show that $f_n+g_n$ strongly converges in $L^p$ to $f+g$. Other improvements,
which also motivate the terminology strong convergence, are discussed in \autoref{rem:sono_simili}.

In the special case $p=1$, we say that $f_n \in L^1(X,\meas_n)$ converge to $f \in L^1(X,\meas_n)$ weakly 
in $L^1$ if $f_n \meas_n$ weakly converge to $f \meas$ in $\Meas_\loc (X)$. Since in the context of variable measures
it seems not easy to formulate an adequate notion of equi-integrability, this notion is strictly weaker than the classical
weak $L^1$ convergence when the measure is fixed. However, sufficient conditions for sequential compactness
w.r.t.~weak $L^1$ convergence can be stated also in the case of variable measures. 

\begin{lemma}\label{lem:DPett}
The following properties hold.
\begin{itemize} 
\item[(i)] 
If $|f_n|\leq g_n h_n$ with $g_n$ bounded in $L^2$ and $h_n$ strongly convergent in $L^2$, then $(f_n)$ has subsequences $(f_{n(k)})$ weakly convergent in $L^1$.
\item[(ii)]
If $\sup_n\int_X\Theta(|f_n|)\di\meas_n<\infty$ for some $\Theta:[0,\infty)\to [0,\infty]$ with more than linear growth at infinity, then
$(f_n)$ has subsequences $(f_{n(k)})$ weakly convergent in $L^1$. In addition, if $\Theta$ is convex and lower semicontinuous,
$$
\int_X\Theta(|f|)\di\meas\leq\liminf_{k\to\infty}\int_X\Theta(|f_{n(k)}|)\di\meas_{n(k)}.
$$
\end{itemize}
\end{lemma}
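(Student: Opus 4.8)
The plan is to prove both parts by reducing to the weak $L^1$ compactness criterion through the Dunford--Pettis theorem applied in a suitable fixed auxiliary measure. The main conceptual difficulty is that the reference measures $\meas_n$ vary with $n$, so the classical Dunford--Pettis theorem (which lives on a single fixed measure space) does not apply directly; the trick will be to transfer everything to a fixed space by viewing each $f_n$ as living on $(X,\meas_n)$ and comparing against products with strongly convergent sequences.

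For part (i), I would first note that since $h_n$ converges strongly in $L^2$, by the improvement \eqref{eq:duality-strong-weak-convergence} of strong convergence to duality against weakly convergent sequences, the products $g_n h_n$ are controlled: writing $|f_n|\leq g_n h_n$, I would test against an arbitrary $v\in\Cbs(X)$ and use Cauchy--Schwarz in $L^2(X,\meas_n)$ to get $\int_X |f_n|\,|v|\di\meas_n\leq \norm{g_n}_{L^2(X,\meas_n)}\norm{h_n v}_{L^2(X,\meas_n)}$. The uniform $L^2$ bound on $g_n$ together with the strong (hence norm-bounded) convergence of $h_n$ yields a uniform bound on $\int_X |f_n v|\di\meas_n$ for each fixed bounded $v$. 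To upgrade this to weak $L^1$ compactness of $(f_n)$ in the variable-measure sense, the cleanest route is to show that the measures $f_n\meas_n$ (or their total variations $|f_n|\meas_n$) are tight and locally uniformly bounded, so that a subsequence converges weakly in $\Meas_\loc(X)$ to some $\mu$; the Cauchy--Schwarz estimate with $h_n$ strongly convergent should produce the equi-integrability needed to identify the limit as $f\meas$ with $f\in L^1(X,\meas)$, rather than a measure charging sets where $\meas$ vanishes or escaping to infinity. The key point is that strong convergence of $h_n$ prevents concentration: it forces $\int_{\{h_n>M\}} g_n h_n\di\meas_n$ to be small uniformly in $n$ for large $M$, which is precisely the variable-measure analogue of equi-integrability.

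For part (ii), the superlinear growth hypothesis on $\Theta$ is exactly the de la Vall\'ee--Poussin condition. I would use the uniform bound $\sup_n\int_X\Theta(|f_n|)\di\meas_n<\infty$ to deduce, again, that $(|f_n|\meas_n)$ is a tight and locally bounded sequence of measures: superlinearity gives, for every $\eps>0$, a threshold $M$ with $|f_n|\leq \eps\,\Theta(|f_n|)$ on $\{|f_n|>M\}$, controlling the large-value part of the mass uniformly. Combined with weak convergence of $\meas_n$ to $\meas$, a diagonal/subsequence argument extracts $f_{n(k)}\meas_{n(k)}\weakto f\meas$ in $\Meas_\loc(X)$ with $f\in L^1(X,\meas)$. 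For the lower semicontinuity inequality when $\Theta$ is convex and lower semicontinuous, I would invoke the standard lower semicontinuity of integral functionals with respect to weak convergence of measures: by Jensen/duality one writes $\Theta$ as a supremum of affine functions $t\mapsto a t - b$ and tests against $\int_X(a|f_n|-b)\,\phi\di\meas_n$ for $\phi\in\Cbs(X)$, $0\leq\phi\leq 1$, then passes to the limit using weak $L^1$ convergence of $f_{n(k)}$ and weak convergence of $\meas_{n(k)}$, and finally takes the supremum over $\phi\uparrow 1$ and over the affine minorants.

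The step I expect to be the main obstacle is the identification of the weak limit of $f_n\meas_n$ as $f\meas$ with $f$ genuinely in $L^1(X,\meas)$ (and not merely a measure absolutely continuous with respect to $\meas$ whose density could fail integrability), together with verifying that no mass escapes to infinity. In the fixed-measure setting this is automatic from Dunford--Pettis, but here the interplay between the variable measures and the equi-integrability substitute — strong $L^2$ convergence of $h_n$ in (i), de la Vall\'ee--Poussin superlinearity in (ii) — must be used carefully to rule out concentration and escape of mass. Once tightness and the absence of concentration are established, the extraction of the subsequence and the lower semicontinuity bound follow from \autoref{prop:passa_al_limite} and the convex duality representation of $\Theta$.
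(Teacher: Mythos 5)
Your argument follows essentially the same route as the paper's (which is itself only sketched): local uniform bounds plus tightness give weak-$*$ compactness of $|f_n|\meas_n$ in $\Meas_\loc(X)$, the Cauchy--Schwarz bound against the strong $L^2$ limit of $h_n$ (resp.\ the superlinearity of $\Theta$) identifies any limit point as absolutely continuous with respect to $\meas$, and the $\liminf$ inequality is the classical convex-duality argument the paper cites from \cite[Lemma~9.4.3]{AmbrosioGigliSavare08}. Your only superfluous concern is escape of mass to infinity: weak $L^1$ convergence is defined here in duality with $\Cbs(X)$, so it is a purely local notion and no global tightness is required.
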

\begin{proof}~(i) Let $h$ be the strong $L^2$ limit of $h_n$ and $L=\sup_n\|g_n\|_{L^2(X,\meas_n)}$.
It is easy to check that any limit point $\mu$ of $f_n\meas_n$ satisfies $\mu(C)\leq L\bigl(\int_Ch^2\di\meas)^{1/2}$
for all $C\subset X$ closed and bounded, hence $\mu\ll\meas$.

(ii) It follows by a classical duality argument, see for instance \cite[Lemma~9.4.3]{AmbrosioGigliSavare08}.
\end{proof}

%
%
All such notions of weak and strong convergence have natural local analogues. In particular, we say that $f_n$ converge to $f$ 
weakly in $L^p_\loc$ (resp. weakly-* if $p =\infty$) if $f_nv$ converge to $fv$ weakly in $L^p$ (resp. weakly-* if $p=\infty$)
for all $v\in C_\bs(X)$. In the case of strong $L^p_\loc$ convergence, $p\in (1,\infty)$, we ask that $f_n \meas_n$ weakly converge 
to $f \meas$ in $\Meas_\loc (X)$, with
\begin{equation*}
\limsup_{n\to \infty} \left\| f_n \right\|_{L^p(B_R(\bar x),\meas_n)}\leq
\|f\|_{L^p(B_R(\bar x),\meas)}<\infty
\end{equation*}
for some $\bar x\in X$ and arbitrarily large $R$. It follows immediately that \eqref{eq:duality-strong-weak-convergence} holds 
for all $v_n$  weakly converging in $L^{p'}$ to $v$ with $\cup_n\supp v_n$ bounded in $X$.


Even in the simplest case when $\meas_n=\meas$, another natural notion of convergence for metric-valued maps is 
convergence in measure.

\paragraph{Convergence in measure for metric space valued maps.} Given a metric space $(Y,\dist_Y)$, 
we say that Borel functions $f_n: X \to Y$ (locally) converge in measure to a Borel $f: X \to Y$ if, for every $\Phi\in\Cb(Y)$ one has that
$\Phi(f_n)\meas_n$ weakly converge to $\Phi(f)\meas$, namely
\begin{equation}\label{eq:weakPhi} 
\lim_{n \to \infty} \int_X v\Phi(f_n)\di \meas_n  = \int_X v \Phi(f)\di \meas\qquad\forall v\in\Cbs(X).
\end{equation}
Notice that, choosing $\Phi$ to be constant, already \eqref{eq:weakPhi} encodes the
weak convergence of $\meas_n$ to $\meas$, so strictly speaking this should be understood as a convergence of pairs $(\meas_n, f_n)$,
where $\meas_n$ are reference measures for $f_n$.
It is also easy to check that, when $\meas_n =\meas$ is fixed, this requirement is equivalent to the usual notion of local convergence in 
$\meas$-measure. 

In the following proposition we provide equivalent formulations of the convergence in measure, closely
related to the theory of Young measures \cite{Valadier}. To this aim, we introduce the following
terminology: we say that a family $(\mu_n)\subset\Meas^+_\loc(X\times Y)$ is locally tight w.r.t.~$X$ if, for some $\bar x\in X$ and
all $R>0$, the measures $\mu_n\res B_R(\bar x)\times Y$ are tight. Accordingly, we denote by $\Cbsx(X\times Y)$ the space
$$
\Cbsx(X\times Y)\coloneqq 
\left\{f\in \Cb(X\times Y) : \supp(f)\subset B_R(\bar x)\times F
\ \text{for some $\bar x\in X$, $R>0$}\right\}
$$
and we say that $\mu_n\Weakto{\Cbsx(X\times Y)}\mu$ if $\mu_n\to\mu$ in the duality with $\Cbsx(X\times Y)$. 

\begin{proposition}\label{prop:convergence-in-measure}
Let $(Y,\dist_Y)$ be a metric space, let $\meas_n$ be weakly convergent to $\meas$ in $\Meas^+_\loc(X)$ 
and let $f_n,\,f: X\to Y$  be Borel. The following conditions are equivalent:

\begin{itemize}
\item[(a)] $f_n$ converge to $f$ in measure;
\item[(b)]  for any $\Phi \in \Cb(Y)$ and any $v_n\in L^1(X,\meas_n)$ weakly converging to $v\in L^1(X,\meas)$ in $L^1$ with
$\cup_n\supp(v_n)\subset B_R(\bar x)$ for some $\bar x\in X$ and $R>0$, one has
\[ \lim_{n \to \infty} \int_X v_n \Phi(f_n)\di \meas_n  = \int_X v \Phi(f)\di \meas;\] 
\item[(c)] $(\Id \times (\Phi\circ f_n))_{\#} \meas_n$ converge in duality with $\Cbsx(X\times\setR)$ to 
$(\Id \times (\Phi \circ f))_\#\meas$ for any $\Phi \in \Cb(Y)$.
\end{itemize}
Under the assumption that $(\Id \times f_n)_\#\meas_n$ are locally tight w.r.t.~$X$, (a), (b), (c) are also equivalent to
$(\Id \times f_n)_\#\meas_n\Weakto{\Cbsx(X\times Y)}(\Id \times f)_\# \meas$.
\end{proposition}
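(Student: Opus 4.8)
The plan is to establish the easy implications $(b)\Rightarrow(a)$ and $(c)\Rightarrow(a)$ directly, then $(a)\Rightarrow(c)$ by approximation, and finally the substantial implication $(a)\Rightarrow(b)$, before treating the tight case. The implication $(b)\Rightarrow(a)$ is immediate: for fixed $v\in\Cbs(X)$ the constant sequence $v_n:=v$ converges weakly in $L^1$ to $v$ (because $\meas_n\weakto\meas$) and has fixed bounded support, so specializing (b) to this choice and to an arbitrary $\Phi\in\Cb(Y)$ reproduces exactly \eqref{eq:weakPhi}, that is (a). Likewise $(c)\Rightarrow(a)$ will follow by testing against the product $g(x,t)=v(x)\chi(t)$, with $v\in\Cbs(X)$ and $\chi\in\Cb(\setR)$ chosen to coincide with $t\mapsto t$ on $[-M,M]$, where $M:=\sup_Y|\Phi|$; since $\Phi(f_n)$ and $\Phi(f)$ take values in $[-M,M]$ this returns (a).

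For $(a)\Rightarrow(c)$ I would argue by Stone--Weierstrass on the compact range of the second variable. Writing $\psi_n:=\Phi\circ f_n$ and $\psi:=\Phi\circ f$, both bounded by $M$, I first record the uniform mass bound $\sup_n\meas_n(B_R(\bar x))<\infty$, obtained by testing $\meas_n\weakto\meas$ against a cutoff in $\Cbs(X)$. Given $g\in\Cbsx(X\times\setR)$ with $X$-support in $B_R(\bar x)$, I approximate $g$ uniformly on $B_R(\bar x)\times[-M,M]$ by finite sums $\sum_i v_i(x)\chi_i(t)$ with $v_i\in\Cbs(X)$, $\chi_i\in\Cb(\setR)$; each term passes to the limit by (a) applied to $\chi_i\circ\Phi\in\Cb(Y)$, while the uniform error is absorbed by the mass bound. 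As $\int_X g(x,\psi_n(x))\di\meas_n$ is the pairing of $(\Id\times\psi_n)_\#\meas_n$ with $g$, this gives (c).

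The heart of the matter is $(a)\Rightarrow(b)$, which I expect to be the main obstacle. The difficulty is that $v_n$ converges only weakly in $L^1$ while $\psi_n=\Phi(f_n)$ converges only in measure, so neither factor converges strongly. The mechanism that rescues the argument is that the limit is \emph{deterministic}: applying (a) to $\Phi$ and to $\Phi^2$ (with cutoffs and the mass bound) shows $\psi_n\to\psi$ strongly in $L^2_\loc$, and the limiting Young measure $(\Id\times\psi)_\#\meas$ is concentrated on the graph of $\psi$. Fixing $\eps>0$, I would use Lusin's theorem to produce $h\in\Cb(X)$ with $|h|\le M$ and $\meas\bigl(B_R(\bar x)\cap\{h\ne\psi\}\bigr)$ small, and split
\[
\int_X v_n\psi_n\di\meas_n=\int_X v_n h\di\meas_n+\int_X v_n(\psi_n-h)\di\meas_n.
\]
The first term converges to $\int_X vh\di\meas$ by weak $L^1$ convergence of $v_n$ paired with the fixed bounded continuous $h$ (after inserting a cutoff equal to $1$ on $B_R(\bar x)$), and $\int_X vh\di\meas\to\int_X v\psi\di\meas$ as the Lusin error vanishes, using $v\in L^1(X,\meas)$. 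The remainder term is where the obstacle sits: since $\psi_n-h$ is bounded by $2M$ and tends to $\psi-h$ in $\meas_n$-measure, it is small outside a set $A_n\subset B_R(\bar x)$ of vanishing $\meas_n$-measure, and one must bound $\limsup_n\int_{A_n}|v_n|\di\meas_n$. This is precisely the point where the weak $L^1$ convergence of $v_n$ enters through the non-concentration (uniform integrability) it encodes: the deterministic limit prevents the mass of $v_n$ from asymptotically charging the shrinking bad set $A_n$.

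Finally, for the tight case I would upgrade (c) to convergence against $\Cbsx(X\times Y)$. One implication is free: testing $(\Id\times f_n)_\#\meas_n\Weakto{\Cbsx(X\times Y)}(\Id\times f)_\#\meas$ against $g\circ(\Id\times\Phi)$, which lies in $\Cbsx(X\times Y)$ whenever $g\in\Cbsx(X\times\setR)$, returns (c). For the converse I would use local tightness to select, for each $R$ and $\eps$, a compact $Q\subset Y$ with $(\Id\times f_n)_\#\meas_n\bigl(B_R(\bar x)\times(Y\setminus Q)\bigr)<\eps$ uniformly in $n$, and likewise for the limit; on the compact $\overline{B_R(\bar x)}\times Q$ a general $g\in\Cbsx(X\times Y)$ is approximated uniformly by sums $\sum_i v_i(x)\Phi_i(y)$, each term handled by (a), while the contribution from $Y\setminus Q$ is controlled by $\eps\|g\|_\infty$. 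The role of tightness is exactly to rule out escape of mass in the noncompact $Y$-direction, an issue that does not arise in the $\setR$-valued reductions (a)--(c) because there the second variable already ranges in the compact interval $[-M,M]$.
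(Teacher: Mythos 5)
Your overall architecture is sound but differs from the paper's: you attack (a)$\Rightarrow$(b) head-on, whereas the paper closes the cycle as (a)$\Rightarrow$(c)$\Rightarrow$(b), after reducing to $Y=[0,1]$ and $\Phi=\Id$ and working with the Young measures $\mu_n=(\Id\times f_n)_\#\meas_n$. The difference of route is not the issue; the issue is that the one step you defer to the weak $L^1$ convergence of $(v_n)$ is precisely the step that this hypothesis cannot carry. You assert that the remainder term is controlled because weak $L^1$ convergence ``encodes'' uniform integrability, i.e.\ that $\limsup_n\int_{A_n}|v_n|\di\meas_n$ is small whenever $\limsup_n\meas_n(A_n)$ is. With the paper's definition of weak $L^1$ convergence for variable measures --- nothing more than weak convergence of $v_n\meas_n$ to $v\meas$ in $\Meas_{\loc}(X)$ plus an $L^1$ bound --- this is false, and the paper itself warns that no equi-integrability is built into this notion. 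Concretely: take $\meas_n=\meas=\leb^1\res [0,1]$, $f_n=f=\chi_{[0,1/2]}$ and $v_n=n\chi_{[1/2,1/2+1/n]}-n\chi_{[1/2-1/n,1/2]}$; then $v_n\meas\weakto 0=v\meas$, $\|v_n\|_{L^1}=2$, and (a) holds trivially, yet $\int_X v_nf\di\meas\equiv -1\neq 0$. The bad set $A_n$ shrinks exactly where $v_n$ concentrates, so ``the deterministic limit'' does not rescue the estimate. This is a genuine gap, not a missing detail.

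For comparison, the paper's proof of (c)$\Rightarrow$(b) does not appeal to equi-integrability: it truncates $v_n$ at a level $M$, bounds the contribution of $\{|v_n|\le M\}$ by $M\int_{B_R(\bar x)\times Y}|y-w(x)|\di\mu_n\to M\int_{\bar B_R(\bar x)}|f-w|\di\meas$ (then optimizes over $w$ by density of $\Cbs(X)$ in $L^1_{\loc}$ and lets $M\to\infty$), and disposes of $\{|v_n|>M\}$ via the Chebyshev bound $\meas_n(\{|v_n|>M\})\le 1/M$. Note, though, that what must actually be estimated there is $\int_{\{|v_n|>M\}}|v_n|\,|y-w|\di\mu_n$ rather than $\meas_n(\{|v_n|>M\})$, so even that tail bound requires additional control on $(v_n)$ (a uniform $L^\infty$ or $L^{p'}$, $p'>1$, bound --- which is what is available in every place (b) is invoked later, e.g.\ in the closure theorem for flows where $0\le v_n\le C$). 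In short, you have put your finger on exactly the delicate point, but the resolution you propose is not available under the stated hypotheses; you must either reproduce the truncation device or strengthen the integrability assumption on $(v_n)$. A separate, minor and fixable point: your Stone--Weierstrass approximations in (a)$\Rightarrow$(c) and in the tight case are performed on $\overline{B_R(\bar x)}\times[-M,M]$, but closed balls in a complete separable $X$ need not be compact; you first need the tightness of $\meas_n\res B_R(\bar x)$ (automatic from weak convergence) to pass to a compact subset of $X$, which is in effect what the paper does by identifying the limit only through product test functions $\psi(x)\Phi(y)$.
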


\begin{proof} For ease of notation, write $\mu_n := (\Id \times f_n)_\#\meas_n$, and $\mu = (\Id \times f)_\#\meas$.
It is obvious that the convergence of $\mu_n$ to $\mu$ in duality with $\Cbsx(X\times Y)$ implies (c), and that (b) implies (a).
 
We first prove the implications (c)$\implies$(b) and (a)$\implies$(c), which provide the equivalence between (a), (b), (c). 
In the proof of these two implications,  without any loss of generality we may assume that $Y$ is a compact interval of $\setR$, 
say $Y = [0,1]$, and that $\Phi$ is the identity map (it is sufficient to argue with fixed $\Phi \in \Cb(X)$ and replace $f_n$ with  $\Phi \circ f_n$, $f$ with $\Phi \circ f$).

(c)$\implies$(b). Let $v_n$, $v$ as in (b), with $\sup_n\norm{ v_n }_{L^1(X,\meas_n)} \le 1$, and let $w\in C_\bs(X)$. 
By weak convergence of the measures $v_n \meas_n$ to $v \meas$,
\[
 \lim_{n \to \infty} \int_X v_n f_n \di \meas_n = \lim_{n \to \infty} \int_X v_n w \di \meas_n + \int_X v_n (f_n - w)\di \meas_n
=  \int v w \di \meas,
\]
because
\begin{equation*}\begin{split}
\limsup_{n\to \infty} & \left| \int_X  v_n (f_n - w)\di \meas_n \right | 
  = \limsup_{n\to \infty} \left| \int_{X\times Y} v_n(x) \left(y - w(x)\right) \di \mu_n (x,y)\right | \\
  & \le\limsup_{n\to \infty} \left\{M \int_{B_R(\bar x)\cap \{ \abs{v_n} \le M \}} \left| y - w(x) \right | \di \mu_n(x,y) + \meas_n \left(\{\abs{v_n} > M \}\right)\right\} \\  
  & \le   M \int_{\bar{B}_R(\bar x)\times Y} \left| y - w(x) \right | \di \mu(x,y) + \frac 1 M \\
 &=  M \int_{\bar{B}_R(\bar x)}\left| { f(x) - w(x)} \right | \di \meas(x) + \frac 1 M,
\end{split}\end{equation*}
and $M>0$ is arbitrary. By density of $\Cbs(X)$ in $L^1_\loc(\bar{B}_R(\bar x),\meas)$, we 
may optimize the choice of $w$ and then let $M \to \infty$ to conclude.

(a)$\implies$(c).  The family $(\mu_n)$ is locally tight w.r.t.~$X$, thanks to the local tightness of $\meas_n$ in $X$
(recall that we are assuming that $Y$ is compact). In addition, any weak limit point $\sigma$
of $(\mu_n)$ in duality with $\Cbsx(X\times Y)$ agrees with $\mu$ on test functions $\varphi(x,y)$ of the form $\psi(x)\Phi(y)$, 
with $\psi\in\Cbs(X)$ and $\Phi\in\Cb(Y)$. This class of functions is sufficient to prove that $\sigma=\mu$, hence $\mu_n$ weakly converge to 
$\mu$ in duality with $\Cbsx(X\times Y)$.

Finally, for general metric spaces $(Y,\dist_Y)$, if we assume that $(\mu_n)$ is locally tight w.r.t.~$X$, the argument used to prove that (a) implies (c) works and provides the weak convergence of $\mu_n$ to $\mu$ in duality with $\Cbsx(X\times Y)$.
\end{proof}

\begin{remark}[The case of probability measures]\label{rem:measure_when_prob}
In the study of flows, we shall need convergence in measure of $C([0,T];X)$-valued maps; in this case 
the reference measures for our maps will be probability measures (actually marginal measures of probabilities defined on $C([0,T];X)$). In this
simpler case the results of \autoref{prop:convergence-in-measure} can be strengthened.
Indeed when all $\meas_n$ as well as $\meas$ are probability measures, an equivalent formulation of the convergence in measure \eqref{eq:weakPhi}
is with test functions $v\in\Cb(X)$. In addition, stating (b) without any bound on the support of $v_n$ and replacing $\Cbsx(X\times Y)$
with $\Cb(X\times Y)$, in this modified form (a), (b), (c) are still equivalent; finally, assuming a priori the tightness of $\mu_n$,
these notions are equivalent to the weak convergence of $\mu_n$ to $\mu$ in $\Prob(X\times Y)$.
\end{remark}

In the following remark we compare strong convergence with convergence in measure. 

\begin{remark}[Convergence in measure versus strong $L^p$ convergence]\label{rem:sono_simili}
{\rm In the case $Y=\setR$ it is natural to compare strong convergence in $L^p$, $p\in (1,\infty)$, with convergence in measure. 
As in the case of a fixed measure, for sequences convergent in measure and uniformly
bounded in $L^p_\loc$ as in \eqref{eq:bound-lp-norms} for some $p\in (1,\infty)$, one can use the stability of convergence in measure under truncations and property
(b) to show strong convergence in $L^q_\loc$ for any $q\in (1,p)$ and then (thanks to weak $L^p$ compactness), weak convergence in $L^p$ if the
bound is uniform in $L^p$. Conversely,
if $f_n$ converge strongly in $L^p$, strong convexity of the function $\Theta(z)=|z|^p$ and a detailed analysis of the limit points of the measures 
$\mu = (\Id\times f_n)_\#\meas_n$ in the duality with $\Cbsx(X\times\setR)$ 
(see for instance \cite[Section~5.4]{AmbrosioGigliSavare08}, \cite{GigliMondinoSavare13}) show that $f_n\to f$ in measure. 
}\end{remark}

\paragraph{Normed $L^\infty$ modules.} Recall that a (real) $L^\infty(X,\meas)$-module $M$ is a real vector space with the additional structure of bilinear multiplication
by $L^\infty(X,\meas)$ functions $\chi: m\in M\mapsto\chi m\in M$, with the associativity property $\chi(\chi' m)=(\chi\chi') m$;
in addition, multiplication by $\lambda\in\setR$ corresponds to the multiplication by the $L^\infty(X,\meas)$ function equal $\meas$-a.e.~to $\lambda$.

We say that $L^\infty(X,\meas)$-module $M$ is a $L^2(X,\meas)$-normed module if there exists a ``pointwise norm''
$\abs{\plchldr}:M\to \{f\in L^2(X,\meas):\ f\geq 0\}$ satisfying:
\begin{itemize}
\item[(a)] $|m+m'|\leq |m|+|m'|$ $\meas$-a.e.~in $X$ for all $m,\,m'\in M$;
\item[(b)] $|\chi m|=|\chi||m|$ $\meas$-a.e.~in $X$ for all $m\in M$, $\chi\in L^\infty(X,\meas)$;
\end{itemize}
with the property that
\begin{equation}\label{eq:natural_norm}
\|m\| \coloneqq \biggl(\int_X|m(x)|^2\di\meas(x)\biggr)^{1/2}
\end{equation}
is a norm in $M$. Notice that the homogeneity
 and the subadditivity of $\norm{\plchldr}$ are obvious consequences of (a), (b).

If $M$ is a $L^2(X,\meas)$-normed module, we can define $M^*$ as the set of all
continuous $L^\infty(X,\meas)$-module isomorphisms $L:M\to L^1(X,\meas)$, i.e.~linear maps
satisfying, for some $h\in L^2(X,\meas)$ and all $\chi\in L^\infty(X,\meas)$, $m\in M$, the properties
$$
L(\chi m)=\chi L(m),
$$
$$
|L(m)|\leq h|m|\qquad\text{$\meas$-a.e.~in $X$.}
$$
The set $M^*$ can also be given the structure of $L^2(X,\meas)$-module, defining $|L|$ as the least
function $h$ (in the $\meas$-a.e.~sense) satisfying the inequality above.

Finally, we say that a $L^2(X,\meas)$-normed module $M$ is a Hilbert module if $(M,\norm{\plchldr})$ is a Hilbert
space when endowed with the natural norm in \eqref{eq:natural_norm}.

The following result has been proved in \cite{Gigli}; see Proposition~1.2.21 and Theorem~1.2.24 therein.

\begin{theorem}\label{tgigli} Let $(X,\meas)$ be a measure space
and let $M$ be a $L^2(X,\meas)$-normed Hilbert module. Then the following facts hold.
\begin{itemize}
\item[(i)] The map
$$
\langle m,m'\rangle_x:=\frac{|m+m'|(x)^2-|m-m'|(x)^2}{4}
$$
provides a $L^\infty(X,\meas)$-bilinear and continuous map from $M\times M$ to $L^1(X,\meas)$, with 
$\||m|\|^2_{L^2(X,\meas)}=\|\langle m,m\rangle\|_{L^1(X,\meas)}$.
\item[(ii)] (Riesz theorem) For all $L\in M^*$ there exists $m_L\in M$ such that $|L|=|m_L|$ $\meas$-a.e.~in $X$
(in particular $\|L\|=\|m_L\|$) and
$$
L(m)(x)=\langle m_L,m\rangle_x\qquad\text{for $\meas$-a.e.~$x\in X$, for all $m\in M$.}
$$
\end{itemize}
\end{theorem}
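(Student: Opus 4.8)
The plan is to derive both assertions from a single \emph{pointwise parallelogram identity}, obtained by localization, and then to bootstrap the classical Riesz theorem on the Hilbert space $(M,\norm{\plchldr})$ to a pointwise identity using the $L^\infty(X,\meas)$-module structure.

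\emph{Part (i).} First I would prove that
$$|m+m'|^2 + |m-m'|^2 = 2|m|^2 + 2|m'|^2 \qquad \meas\text{-a.e. in } X$$
for all $m,m'\in M$. To this end, fix a measurable set $E$ and apply the module property (b) together with the parallelogram law valid in the Hilbert space $(M,\norm{\plchldr})$ to the elements $\chi_E m$, $\chi_E m'$; since $\norm{\chi_E u}^2=\int_E|u|^2\di\meas$, this yields equality of the two integrands once integrated over $E$, and as $E$ is arbitrary the integrands coincide $\meas$-a.e. With this identity in hand, define $\langle m,m'\rangle_x$ by the stated polarization formula. Symmetry and $\langle m,m\rangle_x=|m|^2(x)$ are immediate; additivity in each slot follows from the purely algebraic consequences of the parallelogram law, read pointwise a.e. Rational homogeneity comes from additivity, and real homogeneity then follows from the continuity of $\lambda\mapsto|\lambda m|=|\lambda||m|$ guaranteed by (b). For the $L^\infty$-linearity $\langle\chi m,m'\rangle=\chi\langle m,m'\rangle$ I would first check the case $\chi=\chi_E$ directly (on $E$ one has $\chi_E m=m$, off $E$ one has $\chi_E m=0$ and $\langle 0,m'\rangle_x=0$), extend to simple functions by additivity and homogeneity, and pass to general $\chi\in L^\infty(X,\meas)$ by approximation.

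\emph{Part (i), continued.} The pointwise Cauchy--Schwarz bound $|\langle m,m'\rangle_x|\le|m|(x)|m'|(x)$ requires care with the order of quantifiers. For fixed rational $t$, expanding $|m+tm'|^2(x)\ge 0$ by bilinearity gives $|m|^2+2t\langle m,m'\rangle_x+t^2|m'|^2\ge 0$ $\meas$-a.e.; intersecting the countably many exceptional sets and using continuity in $t$, the quadratic inequality holds for all real $t$ at $\meas$-a.e.~$x$, and nonnegativity of its discriminant delivers the bound. This shows $\langle m,m'\rangle\in L^1(X,\meas)$ with the asserted continuity, and plugging $m'=m$ gives $\|\langle m,m\rangle\|_{L^1(X,\meas)}=\int_X|m|^2\di\meas=\| |m| \|_{L^2(X,\meas)}^2$.

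\emph{Part (ii).} Given $L\in M^*$, consider the scalar functional $\ell(m):=\int_X L(m)\di\meas$, well defined since $L(m)\in L^1(X,\meas)$. The defining bound $|L(m)|\le h|m|$ with $h\in L^2(X,\meas)$ and the scalar Cauchy--Schwarz inequality give $|\ell(m)|\le\|h\|_{L^2(X,\meas)}\norm{m}$, so $\ell$ is a bounded linear functional on the Hilbert space $(M,\norm{\plchldr})$. By the classical Riesz theorem there exists $m_L\in M$ with $\ell(m)=\int_X\langle m_L,m\rangle_x\di\meas$ for all $m$, where I use that the Hilbert inner product of $M$ equals $\int_X\langle\plchldr,\plchldr\rangle_x\di\meas$ by polarization and Part (i). To upgrade this to a pointwise identity I would test against $\chi m$ with $\chi\in L^\infty(X,\meas)$: the module property $L(\chi m)=\chi L(m)$ and the $L^\infty$-linearity from Part (i) turn the integral identity into $\int_X\chi\,\bigl(L(m)-\langle m_L,m\rangle_x\bigr)\di\meas=0$ for every $\chi\in L^\infty(X,\meas)$. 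Since the bracket lies in $L^1(X,\meas)$, choosing $\chi$ equal to its sign forces $L(m)(x)=\langle m_L,m\rangle_x$ for $\meas$-a.e.~$x$. The norm identity then follows: $|L(m)|=|\langle m_L,m\rangle|\le|m_L||m|$ $\meas$-a.e.~shows $|L|\le|m_L|$, while testing at $m=m_L$ gives $L(m_L)=|m_L|^2$, hence $|m_L|^2\le|L||m_L|$ and thus $|m_L|\le|L|$ on $\{|m_L|>0\}$ and trivially elsewhere. The main obstacle is the passage from the single Hilbert-space Riesz identity to the pointwise one: it is precisely the $L^\infty$-linearity of Part (i) that enables this upgrade, while the only genuinely delicate technical point is the quantifier exchange in the pointwise Cauchy--Schwarz inequality.
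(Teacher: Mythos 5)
Your argument is correct, and it follows what is essentially the canonical route: the paper itself gives no proof of this statement but quotes it from Gigli's memoir (Proposition~1.2.21 and Theorem~1.2.24), where the argument is precisely yours --- localize the Hilbert parallelogram law through characteristic functions to get it pointwise, polarize, and then use the $L^\infty(X,\meas)$-module structure to upgrade the global Riesz representation on $(M,\norm{\plchldr})$ to a pointwise identity. The only remark worth making is that your quantifier-exchange argument for the pointwise Cauchy--Schwarz inequality can be shortcut: the triangle inequality (a) gives directly $\langle m,m'\rangle_x\le\tfrac14\bigl[(|m|+|m'|)^2-(|m|-|m'|)^2\bigr]=|m|(x)\,|m'|(x)$ $\meas$-a.e., with the matching lower bound obtained by replacing $m'$ with $-m'$.
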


\section{Derivations}

The following definition is inspired by Weaver \cite{Weaver}, but in the statement of the quantitative locality property we follow more closely
the point of view systematically pursued in \cite{Gigli}. Recall that $|\rmD f|$ denotes the minimal relaxed slope,
as defined in \autoref{appendix:slopes}. 

\begin{definition}[Derivations]
A \emph{derivation} on a metric measure space $(X,\dist,\meas)$ is a
linear functional $\bb: \Lipb(X) \to L^0(X,\meas)$ which, for some $h\in L^0(X,\meas)$, satisfies the quantitative locality property
$$\abs{\bb(f)} \le h |\rmD f|\qquad\text{$\meas$-a.e.~in $X$, for all $f\in\Lipb(X)$.}$$ 
The $\meas$-a.e.~smallest function $h$ with this property is denoted by $\abs\bb$ and is called (local) norm of $\bb$.
We denote by $\Der(X,\dist,\meas)$ the space of derivations.
For $p\in[1,\infty]$, we denote by $\Der^p(X,\dist,\meas)$ (resp. $\Der^p_\loc(X,\dist,\meas)$) the space of derivations $\bb$ such that
$\abs\bb\in L^p(X,\meas)$ (resp. $L^p_\loc(X,\meas)$).
\end{definition}

Notice that, thanks to the locality of the minimal relaxed slope $|\rmD f|$ on Borel sets, the choice of $\Lipb(X)$ instead of $\Lipbs(X)$ is not so 
relevant, because any functional $\bb: \Lipbs(X) \to L^0(X,\meas)$ satisfying the locality property can be
canonically extended to a derivation $\tilde\bb$, without increasing its norm, by setting
$$
\tilde{\bb}(f)(x)=\bb(f_R)(x)\qquad\text{$\meas$-a.e.~in $B_R(\bar x)$,}
$$
where $f_R\in\Lipbs(X)$ coincides with $f$ in $B_R(\bar x)$.
As a matter of fact, when integrals are involved, we shall more often work with $\Lipbs(X)$.

As illustrated in details in \cite{Gigli}, arguments analogous to \cite{Ambrosio-Kirchheim}
provide the chain rule (with $\phi:\setR\to\setR$ Lipschitz)
$$
\bb(\phi\circ f)=(\phi'\circ f)\bb(f)\qquad\text{$\meas$-a.e.~in $X$, for every $f\in\Lipb(X)$}
$$
and the Leibniz rule
$$
\bb(fg) = \bb(f)g + f\bb(g) \qquad\text{$\meas$-a.e.~in $X$, for every $f,\,g\in\Lipb(X)$}
$$
as a consequence of additivity and locality (in connection with the Leibniz property, see also \autoref{rem:pre} below).

Any $\bb\in\Der^2(X,\dist,\meas)$ can be canonically extended to $H^{1,2}(X,\dist,\meas)$ if
$\Ch$ is quadratic, retaining the condition $|\bb(f)|\leq |\bb||\rmD f|$ using the existence, for all $f\in H^{1,2}(X,\dist,\meas)$,
of Lipschitz functions $f_n$ with $|\rmD (f-f_n)|\to 0$ in $L^2(X,\meas)$ (so that $\bb(f_n)$ is a Cauchy sequence in $L^1(X,\meas)$).
More generally, this extension is possible whenever $H^{1,2}(X,\dist,\meas)$ is reflexive.

We can now define pre-derivations by restricting the domain to $\Algebra_\bs$ and imposing a weaker locality condition,
with $\Lipa (f)$ in place of $|\rmD f|$ in the right hand side. As for derivations, thanks to locality on open sets of
$\Lipa$, there is essentially no difference in considering
$\Algebra$ or $\Algebra_\bs$ as the domain of a pre-derivation.

\begin{definition}[Pre-derivations]
A \emph{pre-derivation} on a metric measure space $(X,\dist,\meas)$ is a
$\setQ$-linear functional $\bb: \Algebra_\bs\to L^0(X,\meas)$ which, for some $h\in L^0(X,\meas)$, satisfies the locality property
$$\abs{\bb(f)} \le h \Lipa (f) \qquad \text{$\meas$-a.e.~in $X$, for all $f\in\Algebra_\bs$.}$$
\end{definition}

\begin{remark}\label{rem:pre}
Any pre-derivation satisfies the Leibniz rule as a consequence of additivity and locality. 
Indeed, it is sufficient to optimize over rational $\lambda$ and $\mu$ the $\meas$-a.e.~inequality
\[
\begin{split}
\abs{\bb(fg)-\lambda\bb(g)-\mu\bb(f)} &\leq
h \Lipa \bigl(fg-\lambda g-\mu f\bigr) = h \Lipa \bigl((f-\lambda)(g-\mu)\bigr) \\
&\leq h \left[\abs{f-\lambda}\Lipa (g-\mu)+\abs{g-\mu}\Lipa (f-\lambda)\right(].
\end{split}
\]
\end{remark}

\begin{remark} [Different axiomatizations]
Assume that $\bb:\Lipb(X)\to L^0(X,\meas)$ is a linear map satisfying the weaker
locality condition 
$$
\abs{\bb(f)} \le h \Lipa (f)\qquad\text{$\meas$-a.e.~in $X$.} 
$$
for some $h\in L^1_\loc(X,\meas)$ and the continuity w.r.t.~flat convergence
\begin{equation}\label{eq:continuity_of_der}
\text{$f_n\to f$ in $\Lipb(X)$ implies $\bb(f_n)\to\bb(f)$ weakly in $L^1_\loc(X,\meas)$.}
\end{equation}
Then, $\bb\in\Der^1(X,\dist,\meas)$ and $|\bb|\leq h$.
Indeed, the inequality can be improved, getting $|\rmD f|$ in the right hand side, using the continuity property and 
\autoref{tapprox}. Notice that an even weaker locality property will be considered in 
\autoref{lem:stimanorma1}~(ii), and that
\eqref{eq:continuity_of_der} is satisfied when $\div\bb\in L^1_\loc$, see \autoref{lem:continuity_div}.
\end{remark}

\begin{remark} [Derivations as duals of the cotangent bundle] \label{rdual} {\rm 
Notice that $\Der(X,\dist,\meas)$ has a natural structure of $L^\infty(X,\meas)$-module, and that
$\Der^2(X,\dist,\meas)$ has the structure of $L^2(X,\meas)$-normed module, provided by the map
$\bb\mapsto |\bb|$. More specifically, we shall use at some stage that, if $H^{1,2}(X,\dist,\meas)$ is reflexive
(this assumption is not needed in \cite{Gigli} because Gigli's derivations are already defined on
$H^{1,2}(X,\dist,\meas)$), $\Der^2(X,\dist,\meas)$ can be canonically and isometrically
identified with the dual of the cotangent bundle $T^*(X,\dist,\meas)$ introduced in \cite{Gigli}; in this respect, $\Der^2(X,\dist,\meas)$ can
be thought as the collection of all $L^2$ sections of the tangent bundle of $(X,\dist,\meas)$.

Let us recall first the construction of $T^*(X,\dist,\meas)$ from \cite{Gigli}. It arises as the completion of the $L^2(X,\meas)$ normed 
pre-module consisting of finite collections $(E_i,f_i)$ with $E_i\in\Borel(X)$ partition of $X$ and $f_i\in H^{1,2}(X,\dist,\meas)$.
Two collections $(E_i,f_i)$ and $(F_j,g_j)$ are identified if $f_i=g_j$ $\meas$-a.e.~in $E_i\cap F_j$. The sum of $(E_i,f_i)$ and
$(F_j,g_j)$ is defined in the natural way by taking the collection $(E_i\cap F_j,f_i+g_j)$ and the local norm 
$|(E_i,f_i)|\in L^2(X,\meas)$ of $(E_i,f_i)$ is defined by 
$$
|(E_i,f_i)|(x):=|\rmD f_i|(x), \quad \text{$\meas$-a.e.\ $x \in E_i$.}
$$
Thanks to the locality properties of the minimal relaxed slope, this definition does not depend on the choice of
the representative. Multiplication by Borel functions $\chi$ taking finitely many values is defined by
$$
\chi(E_i,f_i)=(E_i\cap F_j,z_jf_i)\qquad\text{with $\chi=\sum_{j=1}^Nz_j\chi_{F_j}$}
$$
and satisfies $|\chi (E_i,F_i)|=|\chi||(E_i,F_i)|$. By completion of this structure we obtain the $L^2(X,\meas)$ normed module
$T^*(X,\dist,\meas)$.

Now, given any $\bb\in\Der^2(X,\dist,\meas)$ we can define
$$
L_\sbb([E_i,f_i]):=\sum_i\chi_{E_i}\bb(f_i)
$$
to obtain a $L^1(X,\meas)$-valued functional on the pre-module which satisfies 
$$
|L_\sbb([E_i,f_i])|\leq |\bb||[E_i,f_i]|\qquad\text{$\meas$-a.e.~in $X$}
$$
and is easily seen to be linear w.r.t.~multiplications by Borel functions $\chi$ with finitely many values. By completion,
$\hat\bb$ extends uniquely to a continuous $L^\infty(X,\meas)$-module homomorphism from $T^*(X,\dist,\meas)$ to $L^1(X,\meas)$
with $|L_\sbb|\leq |\bb|$. From the definition of $|\bb|$ we immediately obtain that $|L_\sbb|=|\bb|$.
Conversely, any continuous $L^\infty(X,\meas)$-module homomorphism $L:T^*(X,\dist,\meas)\to L^1(X,\meas)$
induces $\bb\in \Der^2(X,\dist,\meas)$, via the formula $\bb(f):=L([X,f])$. 
}\end{remark}

In the following lemma, to derive a strict convexity property of the norm of derivations, we make the assumption that $\Ch$ is quadratic, which amounts to say that the cotangent bundle is a Hilbert $L^2(X,\meas)$ module. Of course this convexity could be equally well be taken as an assumption, but we preferred this path (and the use
of \autoref{rdual}) because the quadraticity condition on $\Ch$ is the most established assumption in the recent literature
on $\RCD$ spaces.

\begin{lemma}\label{lem:strict_convexity}
If $\Ch$ is quadratic, for all $\cc,\,\cc'\in\Der(X,\dist,\meas)$ the condition
\begin{equation}\label{eq:parallel}
|\cc+\cc'|=|\cc|+|\cc'|\qquad\text{$\meas$-a.e.~in $X$}
\end{equation}
implies 
$$\cc(f)=\lambda (\cc+\cc')(f)\qquad\text{$\meas$-a.e.~in $X$ for all $f\in\Lipb(X)$,}$$ 
with $\lambda=|\cc|/(|\cc+\cc'|)$ on $\{|\cc+\cc'|>0\}$ (thus independent of $f$).  
\end{lemma}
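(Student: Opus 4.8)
The plan is to exploit the Hilbert-module structure that quadraticity of $\Ch$ confers on derivations and to recognize the hypothesis \eqref{eq:parallel} as the statement of equality in a pointwise Cauchy--Schwarz inequality, which in an inner-product geometry forces proportionality. First I would record that, since $\Ch$ is quadratic, $H^{1,2}(X,\dist,\meas)$ is a Hilbert space, hence reflexive, so by \autoref{rdual} the module $\Der^2(X,\dist,\meas)$ is canonically and isometrically the dual of the cotangent bundle $T^*(X,\dist,\meas)$; the latter is a Hilbert $L^2(X,\meas)$-module, so by the Riesz identification of \autoref{tgigli} so is $\Der^2(X,\dist,\meas)$. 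Consequently $\Der^2$ carries the pointwise scalar product $\langle\cc,\cc'\rangle_x=\tfrac14\bigl(|\cc+\cc'|^2(x)-|\cc-\cc'|^2(x)\bigr)$, which is $L^\infty(X,\meas)$-bilinear, satisfies $\langle\cc,\cc\rangle_x=|\cc|^2(x)$, and obeys the pointwise Cauchy--Schwarz inequality $|\langle\cc,\cc'\rangle_x|\le|\cc|(x)\,|\cc'|(x)$ (the latter by nonnegativity of the pointwise quadratic $t\mapsto|\cc+t\cc'|^2$, evaluated at $t\in\setQ$ and passing to the discriminant).

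Since the asserted identity is local and $\meas$-a.e., I would extend the scalar product to arbitrary $\cc,\cc'\in\Der(X,\dist,\meas)$ by truncation: multiplying by $\chi_{E_k}$ with $E_k:=B_k(\bar x)\cap\{|\cc|+|\cc'|\le k\}$ produces elements of $\Der^2$, because $|\chi_{E_k}\cc|=\chi_{E_k}|\cc|\le k$ is supported on a set of finite measure. The parallelogram identity, and hence the scalar product and Cauchy--Schwarz, hold for these truncations; letting $E_k\uparrow X$ (legitimate since $|\cc|,|\cc'|$ are finite $\meas$-a.e.) transfers them to $\cc,\cc'$ themselves.

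Next, squaring \eqref{eq:parallel} and comparing with $|\cc+\cc'|^2=|\cc|^2+2\langle\cc,\cc'\rangle+|\cc'|^2$ gives $\langle\cc,\cc'\rangle=|\cc||\cc'|$ $\meas$-a.e., i.e.\ equality in Cauchy--Schwarz. I then set $\lambda:=|\cc|/|\cc+\cc'|$ on $\{|\cc+\cc'|>0\}$ and $\lambda:=0$ elsewhere, so that $0\le\lambda\le1$ and $\lambda\in L^\infty(X,\meas)$, and I consider the derivation $\bv:=\cc-\lambda(\cc+\cc')$. Using $\langle\cc,\cc+\cc'\rangle=|\cc|^2+\langle\cc,\cc'\rangle=|\cc|(|\cc|+|\cc'|)=|\cc||\cc+\cc'|$, the pointwise expansion of the norm yields
\[
|\bv|^2=|\cc|^2-2\lambda\langle\cc,\cc+\cc'\rangle+\lambda^2|\cc+\cc'|^2=\bigl(|\cc|-\lambda|\cc+\cc'|\bigr)^2=0\qquad\text{$\meas$-a.e.,}
\]
the last equality holding on $\{|\cc+\cc'|>0\}$ by the choice of $\lambda$ and on $\{|\cc+\cc'|=0\}$ because there $|\cc|=|\cc'|=0$ forces $\cc=0$ and $\lambda(\cc+\cc')=0$.

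Finally, $|\bv|=0$ $\meas$-a.e.\ gives $|\bv(f)|\le|\bv|\,|\rmD f|=0$, hence $\bv(f)=0$ and therefore $\cc(f)=\lambda(\cc+\cc')(f)$ $\meas$-a.e.\ for every $f\in\Lipb(X)$, with $\lambda=|\cc|/|\cc+\cc'|$ on $\{|\cc+\cc'|>0\}$ as claimed. The only genuinely delicate point is the first step: the Hilbert-module scalar product is available \emph{a priori} only on $\Der^2$, and one must be sure it can be brought to bear on general derivations of possibly unbounded norm, which is precisely the role of the truncation $\chi_{E_k}$; everything after that is the pointwise incarnation of the elementary fact that equality in the triangle inequality in an inner-product space forces positive proportionality.
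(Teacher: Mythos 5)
Your proof is correct and follows essentially the same route as the paper's: both reduce to $\Der^2(X,\dist,\meas)$ by truncation, invoke the Hilbert-module scalar product of \autoref{tgigli} via \autoref{rdual}, rewrite \eqref{eq:parallel} as the equality $\langle\cc,\cc'\rangle=|\cc||\cc'|$, and conclude by showing that the pointwise norm of a suitable difference vanishes. The only (cosmetic) difference is that the paper shows $\int|\cc_{n,R}-\cc'_{n,R}|^2\di\meas=0$ for the normalized truncations $\cc/|\cc|$, $\cc'/|\cc'|$ and then manipulates algebraically, whereas you compute $|\cc-\lambda(\cc+\cc')|^2=0$ directly, which lands on the stated conclusion a bit more cleanly.
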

\begin{proof} Let $E=\{x \in X:\ \min\{|\cc|(x),|\cc'|(x)\}>0\}$. We obviously need only to show that
$\cc(f)=\lambda (\cc+\cc')(f)$ $\meas$-a.e.~in $E$ for all $f\in\Lipb(X)$.
Possibly multiplying $\cc$ and $\cc'$ by a positive Borel function $\chi$ with
$\int_X\chi (|\cc|^2+|\cc'|^2)\di\meas<\infty$ we see that it is not restrictive to assume that both $\cc$ and $\cc'$ belong to
$\Der^2(X,\dist,\meas)$. Now, the assumption that $\Ch$ is quadratic yields that the cotangent bundle $T^*(X,\dist,\meas)$ 
described in \autoref{rdual} is a $L^2(X,\meas)$-normed Hilbert module. By the identification introduced in that remark,
the same holds for $\Der^2(X,\dist,\meas)$. Hence, we can use the bilinear map $\langle \plchldr,\plchldr\rangle_x$ on
$\Der^2(X,\dist,\meas) \times \Der^2(X,\dist,\meas)$ provided by \autoref{tgigli} to write the assumption \eqref{eq:parallel} in the form
$$
\langle\cc,\cc'\rangle_x=|\cc|(x)|\cc'|(x)\qquad\text{$\meas$-a.e.~$x\in X$.}
$$ 
Fix $\bar x\in X$, define now $E_n=\{x\in E:\ \min\{|\cc|(x),|\cc'|(x)\}\geq 1/n\}$ and set
$$
\cc_{n,R}:=\chi_{B_R(\bar x)\cap E_n}\frac{1}{|\cc|}\cc,\qquad \cc_{n,R}':=\chi_{B_R(\bar x)\cap E_n}\frac{1}{|\cc'|}\cc'.
$$
Using $L^\infty(X,\meas)$ bilinearity gives
$$
\langle\cc_{n,R},\cc_{n,R}'\rangle_x=\chi_{B_R(\bar x)\cap E_n}\qquad\text{$\meas$-a.e.~in $X$.}
$$ 
By integration we obtain
$$
\int_X|\cc_{n,R}-\cc_{n,R}'|^2\di\meas=0,
$$
hence $\cc_{n,R}=\cc_{n,R'}$. By approximation with respect to the parameters $n$ and $R$, this gives
$|\cc'|\cc(f)=|\cc|\cc'(f)$ $\meas$-a.e.~on $E$ for all $f\in\Lipb(X)$ and thus, with a simple algebraic manipulation,
the result. 
\end{proof}

\begin{definition}[Divergence of (pre-)derivations]
We say that $\bb\in\Der^1_\loc(X,\dist,\meas)$ has divergence in $L^1_\loc$ if there exists
$g\in L^1_\loc(X,\meas)$ satisfying
\begin{equation}\label{eq:divder}
-\int_X \bb(f) \di\meas = \int_X f g\di\meas
\qquad\text{for all $f\in\Liploc(X)$.}
\end{equation}
The function $g$ is uniquely determined and it will be denoted by $\div\bb$.

The case of a pre-derivation is analogous: the only difference is that $\Liploc(X)$ has to be replaced in \eqref{eq:divder}
by $\Algebra_\bs\subset\Algebra$.
\end{definition}

\begin{lemma}\label{lem:extension-pre-derivation}
Let $\bb$ be a pre-derivation with $h\in L^2_\loc(X,\meas)$ and $\div\bb\in L^1_\loc(X,\meas)$.
Then there exists a unique extension to $\tilde\bb\in\Der^2_\loc(X,\dist,\meas)$, with $|\tilde\bb| \le h$ (and same divergence).
\end{lemma}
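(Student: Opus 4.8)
The plan is to build $\tilde\bb$ first on $\Lip_\bs(X)$ by a weak–limit procedure driven by the recovery sequences of \autoref{tapprox}, then to upgrade the locality constant from $\Lipa$ to $|\rmD\cdot|$, and finally to extend canonically to $\Lipb(X)$. Fix $f\in\Lip_\bs(X)$. Since $f$ is bounded, has bounded support and $|\rmD f|\le\Lip(f)$, we have $f\in H^{1,2}(X,\dist,\meas)$, so \autoref{tapprox} furnishes $f_k\in\Algebra_\bs$ with $f_k\to f$ and $\Lipa(f_k)\to|\rmD f|$ in $L^2(X,\meas)$; inspecting the construction we may keep $\supp f_k$ in a fixed ball. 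From $|\bb(f_k)|\le h\,\Lipa(f_k)$, with $h\in L^2_\loc$ fixed and $\Lipa(f_k)$ strongly $L^2_\loc$-convergent, \autoref{lem:DPett}(i) yields a subsequence with $\bb(f_k)\weakto\tilde\bb(f)$ weakly in $L^1_\loc(X,\meas)$.

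The crux is well-definedness: the limit must not depend on the recovery sequence, and here the divergence hypothesis is exactly what is needed. Writing the Leibniz rule of \autoref{rem:pre} for the pre-derivation and integrating, for $\phi,\psi\in\Algebra_\bs$ one gets the integration-by-parts identity
\[
\int_X\phi\,\bb(\psi)\di\meas=-\int_X\psi\,\bb(\phi)\di\meas-\int_X\phi\psi\,\div\bb\di\meas.
\]
If $f_k,g_k\in\Algebra_\bs$ are two recovery sequences for $f$, then $\psi=f_k-g_k\to0$ pointwise, boundedly and with support in a fixed ball, so applying the identity with this $\psi$ and any fixed $\phi\in\Algebra_\bs$ shows that both right-hand terms tend to $0$ (using $\bb(\phi)\in L^1_\loc$ and $\div\bb\in L^1_\loc$). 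Since the weak limits of $\bb(f_k)$, $\bb(g_k)$ are equi-integrable and $\Algebra_\bs$ is uniformly dense in $\Cbs$ on every ball by Stone--Weierstrass, the two limits coincide; hence $\tilde\bb(f)$ is well defined and the full sequence converges. Testing $\pm\bb(f_k)$ against $\phi\ge0$, $\phi\in\Cbs(X)$, and letting $k\to\infty$ (weak $L^1_\loc$-convergence on the left, $\Lipa(f_k)\to|\rmD f|$ in $L^2_\loc$ and $\phi h\in L^2_\loc$ on the right) gives $\int_X\phi\,|\tilde\bb(f)|\di\meas\le\int_X\phi\,h\,|\rmD f|\di\meas$ for all such $\phi$, whence $|\tilde\bb(f)|\le h\,|\rmD f|$ $\meas$-a.e. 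Rational linearity is inherited from $\bb$ (replacing $f_k$ by $\lambda_j f_k$), and this locality bound makes $\lambda\mapsto\tilde\bb(\lambda f)$ continuous, so $\tilde\bb$ is $\setR$-linear.

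Thus $\tilde\bb$ is a derivation on $\Lip_\bs(X)$ with local norm $\le h\in L^2_\loc$, and the canonical extension recalled after the definition of derivations promotes it, without increasing the norm, to $\tilde\bb\in\Der^2_\loc(X,\dist,\meas)$ with $|\tilde\bb|\le h$. For the divergence, pass to the limit in $-\int_X\bb(f_k)\di\meas=\int_X f_k\,\div\bb\di\meas$: testing the weak convergence against $\phi\in\Cbs(X)$ equal to $1$ on the fixed ball containing all $\supp\bb(f_k)$ (which is bounded by locality), the left side tends to $-\int_X\tilde\bb(f)\di\meas$, while dominated convergence gives $\int_X f_k\,\div\bb\di\meas\to\int_X f\,\div\bb\di\meas$. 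Hence $-\int_X\tilde\bb(f)\di\meas=\int_X f\,\div\bb\di\meas$ for all $f\in\Lip_\bs(X)$, which identifies $\div\tilde\bb=\div\bb$.

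For uniqueness, let $\hat\bb\in\Der^2_\loc(X,\dist,\meas)$ be any extension of $\bb$. For $g\in\Algebra_\bs$ and $f\in\Lip_\bs(X)$ the Leibniz rule for $\hat\bb$ gives $\int_X g\,\hat\bb(f)\di\meas=\int_X\hat\bb(fg)\di\meas-\int_X f\,\bb(g)\di\meas$; since $\hat\bb$ agrees with $\bb$ on $\Algebra_\bs$ it carries the divergence $\div\bb$, so $\int_X\hat\bb(fg)\di\meas=-\int_X fg\,\div\bb\di\meas$ and the right-hand side becomes independent of the chosen extension. The same computation for $\tilde\bb$ then yields $\int_X g\,\hat\bb(f)\di\meas=\int_X g\,\tilde\bb(f)\di\meas$ for every $g\in\Algebra_\bs$, and by density $\hat\bb(f)=\tilde\bb(f)$ $\meas$-a.e.; as the passage to $\Lipb(X)$ is canonical, $\hat\bb=\tilde\bb$. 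The step I expect to be the main obstacle is precisely this well-definedness (and with it the uniqueness): the recovery sequences of \autoref{tapprox} converge only in the sense $\Lipa(f_k)\to|\rmD f|$ and are \emph{not} Cauchy for the seminorm $\||\rmD\cdot|\|_{L^2}$, so the naive bound $|\bb(f_k)-\bb(g_k)|\le h\,\Lipa(f_k-g_k)$ does not vanish. It is the hypothesis $\div\bb\in L^1_\loc$, through the integration-by-parts identity above, that removes the difficulty by controlling $\bb(f_k-g_k)$ in duality with the dense algebra $\Algebra_\bs$ rather than in the locality seminorm.
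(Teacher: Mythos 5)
Your proposal is correct and follows essentially the same route as the paper: extract weak $L^1_{\rm loc}$ limit points of $\bb(f_k)$ along an $\Algebra_\bs$-recovery sequence from \autoref{tapprox_refined} (via \autoref{lem:DPett}(i)), pass the bound $h\Lipa(f_k)$ to the limit to get $h|\rmD f|$, and pin down the limit uniquely through the integration-by-parts identity coming from the Leibniz rule and $\div\bb\in L^1_\loc$. Your explicit two-recovery-sequence comparison and the closing uniqueness argument are just a slightly more spelled-out version of the paper's observation that the limit is determined by the formula $\int_X\phi\,\tilde\bb(f)\di\meas=-\int_X f\phi\,\div\bb\di\meas-\int_X f\,\bb(\phi)\di\meas$ for all $\phi\in\Algebra_\bs$.
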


\begin{proof} By the locality property, it is sufficient to build $\bb$ on $\Lip_\bs(X)$.
Given $f \in \Lip_\bs(X)$, let $f_n \in \Algebra_\bs$ be an approximating sequence as provided by \autoref{tapprox_refined}, 
i.e.~such that $\Lipa (f_n) \to \abs{\rmD f}$ in $L^2(X,\meas)$. Actually, up to truncating $f_n$, we may assume that $\abs{f_n} \le \norm{f}_\infty$, for $n\ge 1$. 
By the bound $|\bb(f_n)|\le h\Lipa (f_n)$ and \autoref{lem:DPett}~(i),  
the family $\bb(f_n)$ has weak limit points in $L^1_\loc(X,\meas)$ and any weak limit point, 
denoted $\tilde\bb(f)$, satisfies
\[  |\tilde\bb(f)| \le h|\rmD f|. \]
We only have to show that such limit is uniquely determined by the formula
$$
\int_X \phi \tilde\bb(f) \di\meas = - \int_X f \phi \div \bb \di\meas - \int_X f\bb(\phi) \di\meas\qquad\forall\phi\in\Algebra_\bs,
$$
so that the whole family $\bb(f_n)$ will be weakly convergent and this
will give that $\tilde\bb$ extends $\bb$ and that the map $f\mapsto\tilde\bb(f)$ is linear. Indeed, by definition of divergence, for any $\phi \in \Algebra_\bs$
one has $f_n\phi\in\Algebra_\bs$ and thus
\[
\int_X \phi \bb(f_n) \di\meas = - \int_X f_n \phi \div \bb \di\meas - \int_X f_n \bb(\phi) \di\meas.
\]
By convergence of both sides, the same identity holds with $f$ in place of $f_n$ and $\tilde{\bb}(f)$ in place of $\bb(f_n)$.
\end{proof}

\begin{lemma}[Continuity criterion w.r.t.~flat convergence]\label{lem:continuity_div} Let $\bb\in\Der^1_\loc(X,\dist,\meas)$ and assume that
either $\bb\in\Div^1_\loc(X,\dist,\meas)$, or $H^{1,2}(X,\dist,\meas)$ is reflexive.
Then \eqref{eq:continuity_of_der} holds.
\end{lemma}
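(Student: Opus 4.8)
The plan is to reduce to a null sequence, obtain weak $L^1_\loc$ precompactness from a single domination, and then identify the unique limit separately under each of the two alternative hypotheses. By $\setR$-linearity it is enough to prove that if $g_n := f_n - f \to 0$ in $\Lipb(X)$, i.e.\ $g_n\to 0$ pointwise with $C := \sup_n(\norm{g_n}_\infty + \Lip(g_n)) < \infty$, then $\bb(g_n)\weakto 0$ weakly in $L^1_\loc(X,\meas)$. The defining inequality of a derivation gives the pointwise bound
\[ \abs{\bb(g_n)} \le \abs\bb\,\abs{\rmD g_n} \le C\abs\bb \qquad \meas\text{-a.e.\ in } X, \]
so the whole family $\{\bb(g_n)\}$ is dominated by the single function $C\abs\bb\in L^1_\loc(X,\meas)$; on every bounded set it is therefore uniformly integrable, hence relatively sequentially weakly compact in $L^1_\loc(X,\meas)$ by the Dunford--Pettis theorem. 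Consequently it suffices to show that $0$ is the only weak limit point, and since any $L\in L^1_\loc(X,\meas)$ with $\int_X\phi L\di\meas = 0$ for all $\phi\in\Lip_\bs(X)$ must vanish (bounded Lipschitz functions are measure-determining: test the finite measure $\zeta L\meas$, with $\zeta\in\Lip_\bs(X)$ a cutoff, against all bounded Lipschitz functions), everything reduces to the scalar limit $\int_X \phi\,\bb(g_n)\di\meas \to 0$ for every $\phi\in\Lip_\bs(X)$.

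Consider first the case $\div\bb\in L^1_\loc(X,\meas)$, which is a routine integration by parts. Fix $\phi\in\Lip_\bs(X)$. Applying the definition of divergence to $\phi g_n\in\Lip_\bs(X)$ together with the Leibniz rule $\bb(\phi g_n) = \phi\,\bb(g_n) + g_n\,\bb(\phi)$ gives
\[ \int_X\phi\,\bb(g_n)\di\meas = -\int_X g_n\,\bb(\phi)\di\meas - \int_X \phi g_n\,\div\bb\di\meas. \]
By locality $\bb(\phi)$ is supported in $\supp\phi$, so both integrands live in the bounded set $\supp\phi$ and are dominated there by $C\abs{\bb(\phi)}$ and $C\norm\phi_\infty\abs{\div\bb}$, which are integrable; since $g_n\to 0$ pointwise and is uniformly bounded, dominated convergence makes both terms vanish.

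The case in which $H^{1,2}(X,\dist,\meas)$ is reflexive carries no divergence, and this is the main obstacle: flat convergence must be upgraded to a genuinely functional-analytic convergence. Fix $\phi\in\Lip_\bs(X)$ and a cutoff $\eta\in\Lip_\bs(X)$ with $0\le\eta\le 1$ and $\eta\equiv 1$ on an open ball containing $\supp\phi$; by locality $\int_X\phi\,\bb(g_n)\di\meas = \int_X\phi\,\bb(\eta g_n)\di\meas$. The functions $\eta g_n\in H^{1,2}(X,\dist,\meas)$ are bounded in $H^{1,2}$ (Leibniz bound on $\abs{\rmD(\eta g_n)}$, and $\meas$ finite on $\supp\eta$) and converge to $0$ in $L^2$, so reflexivity forces $\eta g_n\weakto 0$ weakly in $H^{1,2}$, whence the differentials $\rmD(\eta g_n)\weakto 0$ weakly in the cotangent module $T^*(X,\dist,\meas)$. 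Since $\abs\bb$ is only $L^1_\loc$, I truncate: set $\bb_M := \chi_M\bb$ with $\chi_M := \min\{1, M/\abs\bb\}$, so that $\abs{\bb_M} = \min\{\abs\bb, M\}$ is bounded and $\phi\bb_M\in\Der^2(X,\dist,\meas)$. Through the identification in \autoref{rdual} of $\Der^2$ with the dual of $T^*$, one has $\int_X\phi\,\bb_M(\eta g_n)\di\meas = \langle\phi\bb_M,\rmD(\eta g_n)\rangle\to 0$ by the weak convergence just established. Finally the tail $\int_X\phi\,(\bb-\bb_M)(\eta g_n)\di\meas = \int_X\phi\,(1-\chi_M)\bb(\eta g_n)\di\meas$ is bounded, uniformly in $n$, by $C\norm\phi_\infty\int_{\supp\phi\cap\{\abs\bb>M\}}\abs\bb\di\meas$, which tends to $0$ as $M\to\infty$ by integrability of $\abs\bb$ on the bounded set $\supp\phi$; letting $M\to\infty$ yields $\int_X\phi\,\bb(g_n)\di\meas\to 0$.

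Combining either case with the precompactness step gives $\bb(g_n)\weakto 0$ in $L^1_\loc(X,\meas)$, which is exactly \eqref{eq:continuity_of_der}. The whole difficulty is concentrated in the reflexive case: upgrading pointwise, Lipschitz-bounded convergence to weak $H^{1,2}$ convergence, and absorbing the mismatch between the merely $L^1_\loc$ bound on $\abs\bb$ and the module duality through the truncation $\bb_M$.
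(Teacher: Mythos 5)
Your proof is correct, and the divergence case coincides with the paper's (integration by parts via the Leibniz rule plus dominated convergence). In the reflexive case, however, you take a genuinely different route. The paper also starts from the weak $H^{1,2}$ convergence of the (localized) $f_n$, but then invokes Mazur's lemma: any weak $L^1$ limit point $w$ of $\bb(f_n)$ is realized as the limit of $\bb(g_n)$ for convex combinations $g_n\to f$ strong in $H^{1,2}$, and the continuity of the canonical extension of $\bb$ to $H^{1,2}$ forces $w=\bb(f)$. You instead test directly against $\phi\in\Lip_\bs(X)$ and observe that, after the truncation $\bb_M=\chi_M\bb$, the map $g\mapsto\int_X\phi\,\bb_M(g)\di\meas$ is a bounded linear functional on $H^{1,2}(X,\dist,\meas)$ (via the identification of $\Der^2$ with the dual of $T^*$ in \autoref{rdual}), so it passes to the limit under weak convergence, while the tail $\int_X\phi(1-\chi_M)\bb(\eta g_n)\di\meas$ is controlled uniformly in $n$ by $\int_{\supp\phi\cap\{|\bb|>M\}}|\bb|\di\meas$. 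What your version buys is the avoidance of Mazur's lemma and of the (slightly delicate) continuity of the extended derivation along strongly $H^{1,2}$-convergent sequences when $|\bb|$ is only $L^1_\loc$; the price is the extra truncation layer and the explicit appeal to the module duality, which the paper's argument does not need. Your preliminary Dunford--Pettis step and the observation that $\Lip_\bs(X)$ is measure-determining make explicit what the paper leaves implicit when it speaks of ``weak $L^1$ limit points''. Two cosmetic points: writing $\rmD(\eta g_n)\weakto 0$ ``weakly in $T^*$'' is a mild abuse (you only need convergence against the single functional induced by $\phi\bb_M$, which is all the weak-weak continuity of the differential map gives you and all you use), and you should note that the identification $\int_X\phi\,\bb_M(\eta g_n)\di\meas=\langle\phi\bb_M,\rmD(\eta g_n)\rangle$ uses that the canonical extension of $\bb_M$ to $H^{1,2}$ agrees with $\bb_M$ on $\Lipb(X)$; both are immediate.
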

\begin{proof} Let $f_n\to f$ in $\Lipb(X)$. If $\bb$ has divergence in $L^1_\loc$, we can use the formula
$$
\int_X \bb(f)\varphi\di\meas=-\int_X \bb(\varphi) f\di\meas-\int_X f\varphi\div\bb\di\meas\qquad\varphi\in\Lip_\bs(X),
$$
derived from the Leibniz rule to show the continuity property \eqref{eq:continuity_of_der}. In case
of a reflexive Sobolev space $H^{1,2}(X,\dist,\meas)$,  assuming (by locality) that the supports of $f_n$ are equibounded,
we obtain that $f_n\to f$ weakly in $H^{1,2}(X,\dist,\meas)$; hence, if $w$ is any weak $L^1$ limit point of
$\bb(f_n)$, we can obtain $w$ as the limit of $\bb(g_n)$, where $g_n$ are finite convex combinations of $f_n$
converging to $f$ strongly in $H^{1,2}(X,\dist,\meas)$. If follows that $\bb(g_n)\to\bb(f)$, hence $w=\bb(f)$.
\end{proof}

In the sequel we shall use the notation 
$$
\Div^q(X,\dist,\meas):=\left\{\bb\in \Der^1_\loc(X,\dist,\meas):\ \div\bb\in L^q(X,\meas)\right\}
$$
and the analogous notation $\Div^q_\loc(X,\dist,\meas)$.

\part{Weak/strong convergence of derivations}\label{pt:derivations}

Given $(\meas_n)\subset\Meas^+_\loc(X)$ weakly convergent to $\meas\in\Meas^+_\loc(X)$ and $\bb_n\in\Der^1_\loc(X,\dist,\meas_n)$,
we consider notions of local convergence of $\bb_n$ to $\bb\in\Der^1_\loc(X,\dist,\meas)$, two weak ones and a strong one. We study these two
concepts in the next two sections.

\section{Weak convergence of derivations and compactness}\label{sec:two_weak}

We say that $\bb_n\in\Der^1_\loc(X,\dist,\meas_n)$ weakly converge to $\bb\in\Der^1_\loc(X,\dist,\meas)$ in duality with $\Algebra_\bs$, and
write $\bb_n\Weakto{\Algebra_\bs}\bb$ if, for all $f\in\Algebra_\bs$, 
$\bb_n(f)\meas_n$ weakly converge to $\bb(f)\meas$ as measures, i.e.
$$
\lim_{n\to\infty}\int_X \bb_n(f) v\di\meas_n=\int_X \bb(f) v\di\meas\qquad\forall v\in \Cb(X).
$$

\begin{theorem}[Weak compactness of derivations]\label{thm:compactness}
Let $\bb_n\in\Der^2_\loc\cap\Div^1_\loc(X,\dist,\meas_n)$ satisfying for some $\bar x\in X$ and all $R>0$ the conditions 
\[
\sup_n \int_{B_R(\bar x)} |\bb_n|^2 \di\meas_n < \infty, \qquad
\sup_n \int_{B_R(\bar x)} \Theta(|\div\bb_n|)\di\meas_n < \infty
\]
for some function $\Theta=\Theta_R:[0,\infty)\to [0,\infty]$ with more than linear growth at infinity.
Then there exists a subsequence $\bb_{n(k)}$ weakly convergent to $\bb\in\Der^2_\loc\cap \Div^1_\loc(X,\dist,\meas)$.
\end{theorem}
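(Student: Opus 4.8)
The plan is to extract a limit by a diagonal procedure over the countable algebra $\Algebra_\bs$, to identify the limit as a pre-derivation, and then to promote it to a genuine derivation via \autoref{lem:extension-pre-derivation}. The crucial point, and the main obstacle, is that the natural pointwise bound $|\bb_n(f)|\le|\bb_n|_{\meas_n}|\rmD f|_{\meas_n}$ involves the minimal relaxed slope $|\rmD f|_{\meas_n}$, whose dependence on $n$ is hard to control under mere weak convergence of the measures. I would circumvent this by replacing $|\rmD f|_{\meas_n}$ with the measure-independent upper bound $\Lipa(f)$, passing the weaker inequality to the limit, and recovering the sharp locality bound only at the very end through the extension lemma (which in turn relies on the refined approximation result).

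First I would set up the compactness. For $f\in\Algebra_\bs$ the bound $|\bb_n(f)|\le|\bb_n|_{\meas_n}\Lipa(f)\le\Lip(f)|\bb_n|_{\meas_n}$, together with the local $L^2$ bound on $|\bb_n|_{\meas_n}$, shows that $\bb_n(f)$ is bounded in $L^2(B_R(\bar x),\meas_n)$ for every $R$; moreover, since $f$ has bounded support and hence $\Lipa(f)$ vanishes outside a fixed bounded set, each $\bb_n(f)$ is supported in a fixed bounded set, uniformly in $n$. Using weak $L^2_\loc$ compactness for variable measures and a diagonal argument over the countable set $\Algebra_\bs$, I would extract a subsequence $n(k)$ along which $\bb_{n(k)}(f)\meas_{n(k)}$ weakly converges to $B(f)\meas$ for every $f\in\Algebra_\bs$, with $B(f)\in L^2_\loc(X,\meas)$. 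Enlarging the subsequence, I would also arrange that $|\bb_{n(k)}|_{\meas_{n(k)}}\weakto g$ weakly in $L^2_\loc$ and, invoking \autoref{lem:DPett}~(ii) on the $\Theta$-bound, that $\div\bb_{n(k)}\meas_{n(k)}\weakto D\meas$ weakly in $L^1_\loc$, for some $g\in L^2_\loc$ and $D\in L^1_\loc$. Passing linearity to the limit shows that $B$ is $\setQ$-linear on $\Algebra_\bs$.

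Next I would establish locality. Testing the two inequalities $\pm\bb_n(f)\le|\bb_n|_{\meas_n}\Lipa(f)$ against nonnegative $v\in\Cbs(X)$ and using that $\Lipa(f)$ is bounded and upper semicontinuous (hence a pointwise infimum of bounded continuous functions), exactly as in the passage-to-the-limit principle recorded after \eqref{eq:bound-lp-norms}, I would obtain $\limsup_k\int_X|\bb_{n(k)}|_{\meas_{n(k)}}\Lipa(f)v\di\meas_{n(k)}\le\int_X g\,\Lipa(f)v\di\meas$, while the left-hand sides converge to $\int_X B(f)v\di\meas$. This yields $|B(f)|\le g\,\Lipa(f)$ $\meas$-a.e., so $B$ is a pre-derivation with $h=g\in L^2_\loc$. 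For the divergence, I would pass to the limit the identity $\int_X f\div\bb_n\di\meas_n=-\int_X\bb_n(f)\di\meas_n$, valid for $f\in\Algebra_\bs$; both integrands have support in a fixed bounded set, so the convergences already obtained give $\int_X fD\di\meas=-\int_X B(f)\di\meas$, i.e.~$\div B=D\in L^1_\loc$ in the pre-derivation sense.

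Finally, I would apply \autoref{lem:extension-pre-derivation} to the pre-derivation $B$, which has $h=g\in L^2_\loc$ and $\div B=D\in L^1_\loc$: this produces a unique $\bb\in\Der^2_\loc(X,\dist,\meas)$ with $|\bb|\le g$ and $\div\bb=D\in L^1_\loc$, so that $\bb\in\Der^2_\loc\cap\Div^1_\loc(X,\dist,\meas)$. Since $\bb$ extends $B$, we have $\bb(f)=B(f)$ for $f\in\Algebra_\bs$; testing against arbitrary $v\in\Cb(X)$ (legitimate because $\bb_{n(k)}(f)$ and $\bb(f)$ share a fixed bounded support, so $v$ may be cut off to an element of $\Cbs(X)$) gives $\bb_{n(k)}\Weakto{\Algebra_\bs}\bb$, concluding the argument. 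I expect the locality step, where the $n$-dependence of the slope must first be absorbed into $\Lipa(f)$ and only later sharpened through the approximation built into \autoref{lem:extension-pre-derivation}, to be where all the care is needed.
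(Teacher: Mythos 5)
Your proposal is correct and follows essentially the same route as the paper: diagonal extraction of weak $L^2_\loc$ limits $\bb_{n(k)}(f)$ over the countable algebra $\Algebra_\bs$, passage to the limit in $|\bb_n(f)|\le|\bb_n|\Lipa(f)$ using the upper semicontinuity of $\Lipa(f)$ to identify a pre-derivation with $L^2_\loc$ norm bound and $L^1_\loc$ divergence, and then \autoref{lem:extension-pre-derivation} to upgrade it to a genuine derivation. Your added remarks (the bounded support of $\bb_n(f)$ for $f\in\Algebra_\bs$, allowing test functions in $\Cb(X)$, and the explicit appeal to \autoref{lem:DPett}~(ii) for the divergences) only make explicit what the paper leaves implicit.
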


\begin{proof}
Let $\phi \in \Algebra_\bs$. Since $|\bb_n(\phi)|$ is bounded in $L^2$, up to a subsequence there exists a weak $L^2$ limit, that we denote by $\bb(\phi)$. By a diagonal argument, we may assume that $\bb_n(\phi)$ weakly converge in $L^2$ to $\bb(\phi)$ for every $\phi \in\Algebra_\bs$. 
We show that $\phi \mapsto \bb(\phi)$ defines a pre-derivation with $\div \bb \in L^1_\loc(X,\meas)$, hence by \autoref{lem:extension-pre-derivation} we conclude.

Since $\setQ$-linearity is preserved by weak limits we only have to notice that locality of $\bb$ follows from
\[ |\bb_n(\phi) | \le \abs{\bb_n} \abs{\rmD_n \phi} \le \abs{\bb_n} \Lipa (\phi),\quad \text{$\meas_n$-a.e.~on $X$,}\]
where $\phi\in\Algebra_\bs$ and $|\rmD_n \phi|$ denotes the minimal relaxed slope of $\phi$ with respect to the measure $\meas_n$. Hence, by weak convergence in $L^2_\loc$ of $|\bb_n|$  to some $h \in L^2_\loc(X,\meas)$ (possibly taking a further subsequence) and upper semicontinuity of $\Lipa (\phi)$, we conclude that
\[ \abs{\bb(\phi) } \le h \Lipa(\phi),\quad \text{$\meas$-a.e.~on $X$.}\]
Finally, a similar limiting argument proves that $\div\bb_n$ has limit points w.r.t.~the weak $L^1_{\rm loc}$ 
convergence, and that limit points correspond to $\div\bb$.\qedhere
\end{proof}

Our second notion of weak convergence depends somehow on structural assumptions on the metric measure structures, and actually
only on the limit one. Let us assume that the limit metric measure structure $(X,\dist,\meas)$ satisfies the following two regularity
assumptions  (recall that $|\rmD f|$ stands for the minimal relaxed slope, see \autoref{appendix:slopes}):

\begin{itemize}
\item[(a)] $\Ch$ is quadratic; 
\item[(b)] there exists  $\omega:(0,\infty)\to [0,\infty)$ such that $\omega(0_+)=1$ and
\begin{equation}\label{eq:basic1}
\Lipa (P_tf)\leq \omega(t)\sqrt{P_t |\rmD f|^2}\quad\text{pointwise in $\supp\meas$}
\end{equation}
for all $f\in\Lipb(X)\cap H^{1,2}(X,\dist,\meas)$ and $t>0$.
\end{itemize}

These assumptions play a role in Lemma~\ref{lem:stimanorma2} and, in a more essential way, in the proof
of Lemma~\ref{lem:stimanorma1}(ii).
As illustrated in \autoref{appendix:slopes}, all these assumptions hold in $\RCD(K,\infty)$ metric measure spaces, in particular
\eqref{eq:basic1} holds with with $\omega(t)=e^{-Kt}$.  Notice that, since $P_tf$ is only defined up to $\meas$-negligible
sets, using McShane's extension theorem we can and will improve (b) by asking the validity of \eqref{eq:basic1} on the whole
of $X$. 

In view of  \eqref{eq:basic1} it is natural to consider the countable class
\begin{equation}\label{eq:def_Cinfty}
P_{\setQ_+}\Algebra_\bs:=\left\{P_tf:\ f\in\Algebra_\bs,\,\,t\in\setQ_+\right\}\subset\Lipb(X).
\end{equation}
Notice that, thanks to \autoref{tapprox_refined}, $P_{\setQ_+}\Algebra_\bs$ is dense in $H^{1,2}(X,\dist,\meas)$.
 Strictly speaking $P_{\setQ_+}\Algebra_\bs$ should be thought as a subset of $\Lipb(\supp\meas)$, but we understand that
any function in this class is extended to the whole of $X$, with the same supremum and Lipschitz constant, and we think to this
collection as a subset of $\Lipb(X)$.

\begin{lemma}\label{lem:stimanorma2}
Under assumption (a), any $\bb\in\Der(X,\dist,\meas)$ is uniquely determined by its values on $P_{\setQ_+}\Algebra_\bs$.
\end{lemma}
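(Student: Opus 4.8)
The plan is to reduce the statement to a density-and-continuity argument inside $H^{1,2}(X,\dist,\meas)$, exploiting that assumption (a) makes $H^{1,2}$ a Hilbert space, hence reflexive. By $\setQ$-linearity it suffices to show that a derivation $\bb$ with $\bb(g)=0$ for all $g\in P_{\setQ_+}\Algebra_\bs$ vanishes identically. First I would reduce to the case $\bb\in\Der^2(X,\dist,\meas)$: since $\meas$ is $\sigma$-finite, one can pick a strictly positive $\chi\in L^\infty(X,\meas)$ (for instance $\chi=w/(1+|\bb|)$ with $w$ a positive function in $L^2\cap L^\infty$) so that $\chi\bb\in\Der^2$, with $|\chi\bb|=\chi|\bb|\in L^2$. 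Then $(\chi\bb)(g)=\chi\bb(g)=0$ on $P_{\setQ_+}\Algebra_\bs$, while $\chi\bb\equiv 0$ forces $\bb\equiv 0$ because $\chi>0$.

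Next, using assumption (a) and the canonical extension of $\Der^2$ derivations to $H^{1,2}(X,\dist,\meas)$ recalled after the definition of derivations (valid whenever $H^{1,2}$ is reflexive), I would regard $\bb$ as defined on all of $H^{1,2}$, still satisfying $|\bb(f)|\le|\bb||\rmD f|$ $\meas$-a.e. This bound yields continuity of $\bb\colon H^{1,2}\to L^1_\loc(X,\meas)$: if $f_k\to f$ in $H^{1,2}$ then $|\rmD(f_k-f)|\to 0$ in $L^2$, and by Cauchy--Schwarz $\int_{B_R(\bar x)}|\bb(f_k)-\bb(f)|\di\meas\le \norm{|\bb|}_{L^2(B_R(\bar x),\meas)}\,\norm{|\rmD(f_k-f)|}_{L^2(X,\meas)}\to 0$, using $|\bb|\in L^2$. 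Since $P_{\setQ_+}\Algebra_\bs$ is dense in $H^{1,2}$ (as noted before the statement, via \autoref{tapprox_refined}) and $\bb$ vanishes there, continuity gives $\bb\equiv 0$ on $H^{1,2}$.

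Finally I would transfer this back to the full domain $\Lipb(X)$. Every $f\in\Lip_\bs(X)$ lies in $H^{1,2}(X,\dist,\meas)$, because $\meas$ is finite on bounded sets, so the previous step already yields $\bb(f)=0$ for all $f\in\Lip_\bs(X)$. For general $f\in\Lipb(X)$ and any $R>0$, choose $f_R\in\Lip_\bs(X)$ coinciding with $f$ on a neighbourhood of $\bar{B}_R(\bar x)$; then $|\rmD(f-f_R)|=0$ on $B_R(\bar x)$ by locality of the minimal relaxed slope, whence $|\bb(f)-\bb(f_R)|\le|\bb||\rmD(f-f_R)|=0$ $\meas$-a.e.\ on $B_R(\bar x)$, so $\bb(f)=\bb(f_R)=0$ there. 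Letting $R\to\infty$ gives $\bb(f)=0$ $\meas$-a.e.\ in $X$.

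The hard part will be the bookkeeping that the $H^{1,2}$-extension of $\bb$ agrees with the original derivation on the test class $P_{\setQ_+}\Algebra_\bs$, whose elements are bounded Lipschitz but in general not compactly supported: this holds because each $P_tf$ already belongs to $\Lipb(X)\cap H^{1,2}$ and the extension coincides with $\bb$ on Lipschitz elements of $H^{1,2}$ (one may take the approximating sequence to be constant). Everything else is a routine combination of the density statement, the locality bound, and Cauchy--Schwarz.
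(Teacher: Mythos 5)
Your proof is correct and follows essentially the same route as the paper's: reduce to $\bb\in\Der^2(X,\dist,\meas)$, exploit the continuity of $\bb$ along $H^{1,2}$-convergent sequences granted by $|\bb(f)|\le|\bb|\,|\rmD f|$, and conclude from the density of $P_{\setQ_+}\Algebra_\bs$ in $H^{1,2}(X,\dist,\meas)$, which the paper obtains from the same two ingredients you use, namely $P_sf\to f$ in $H^{1,2}$ (assumption (a)) and \autoref{tapprox_refined}. The additional bookkeeping you spell out --- the explicit truncation argument reducing to $\Der^2$ and the passage from $\Lip_\bs(X)$ to $\Lipb(X)$ by locality of the minimal relaxed slope --- is exactly what the paper's two-line proof leaves implicit.
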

\begin{proof}
We need to prove that $\bb\equiv 0$ on $P_{\setQ_+}\Algebra_\bs$ implies $\bb=0$.
It is not restrictive to assume $\bb\in\Der^2(X,\dist,\meas)$. Since, thanks to assumption (a), 
$P_sf\to f$ in $H^{1,2}(X,\dist,\meas)$ as $s\downarrow 0$,
we obtain that $\bb\equiv 0$ on $\Algebra_\bs$. Then, \autoref{tapprox_refined} can be applied.
\end{proof}

Motivated by \autoref{lem:stimanorma2}, we can define another notion of weak convergence.

\begin{definition}
We say that $\bb_n\in\Der^1(X,\dist,\meas_n)$ weakly converge to $\bb\in\Der^1(X,\dist,\meas)$ in duality with $P_{\setQ_+}\Algebra_\bs$, and
write $\bb_n\Weakto{P\Algebra_\bs}\bb$ if, for all $f\in P_{\setQ_+}\Algebra_\bs$, 
$\bb_n(f)\meas_n$ weakly converge to $\bb(f)\meas$ as measures, i.e.
\begin{equation}\label{eq:be_careful}
\lim_{n\to\infty}\int_X \bb_n(f) v\di\meas_n=\int_X \bb(f) v\di\meas\qquad\forall v\in \Cbs(X).
\end{equation}
\end{definition}

Notice that in \eqref{eq:be_careful} the integrals on the left hand side \textit{depend} on the extension of have chosen
of $f$ from $\supp\meas$ to $X$; nevertheless, the compactness result presented below shows that still \eqref{eq:be_careful} makes sense.
With this new notion of weak convergence we can remove the bounds on divergence and, at the same time, consider
more general growth conditions (not necessarily quadratic) on $\bb_n$.

\begin{theorem}\label{thm:compa2}
Assume that $(X,\dist,\meas)$ satisfies the regularity assumptions (a), (b) and
let $\bb_n\in\Der(X,\dist,\meas_n)$ be such that 
\[
\liminf_{n\to\infty}\int_X\Theta(|\bb_n|)\di\meas_n<\infty
\]
for some function $\Theta:[0,\infty)\to [0,\infty]$ with more than linear growth at infinity.
Then there exist a subsequence $\bb_{n(k)}$ and $\bb\in\Der(X,\dist,\meas)$ with
$\bb_n\Weakto{P\Algebra_\bs}\bb$. In addition, if $\Theta$ is convex and lower semicontinuous, one has
$$
\int_X\Theta(|\bb|)\di\meas\leq\liminf_{k\to\infty}\int_X\Theta(|\bb_{n(k)}|)\di\meas_{n(k)}<\infty.
$$
\end{theorem}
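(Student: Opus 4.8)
The plan is to establish compactness for the derivations $\bb_n$ by testing against the regular class $P_{\setQ_+}\Algebra_\bs$ and exploiting the key inequality \eqref{eq:basic1}. First I would fix $g = P_t f \in P_{\setQ_+}\Algebra_\bs$ and observe that the pointwise bound $|\bb_n(g)| \le |\bb_n|\,|\rmD_n g| \le |\bb_n|\,\Lipa(g)$, $\meas_n$-a.e., together with the uniform integrability coming from the superlinear bound $\liminf_n \int_X \Theta(|\bb_n|)\di\meas_n < \infty$, lets me apply \autoref{lem:DPett}~(ii) to the sequence $\bb_n(g)$. This gives, along a subsequence, weak $L^1$ limit points for each fixed $g$; by a diagonal argument over the \emph{countable} class $P_{\setQ_+}\Algebra_\bs$ I can extract a single subsequence (not relabeled) along which $\bb_{n(k)}(g)\meas_{n(k)}$ weakly converges to some limiting measure $\nu_g$ for every $g \in P_{\setQ_+}\Algebra_\bs$. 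The countability of the test class is exactly why $\Algebra_\bs$ and its heat-regularized version were introduced, so this step is clean.

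Next I would identify the limit as a genuine derivation. The weak $L^1$ limit $\nu_g$ should be of the form $\bb(g)\meas$ for some $\meas$-measurable function, and the crucial point is the locality estimate. Here the essential ingredient is \eqref{eq:basic1}: writing $g = P_t f$, I bound $\Lipa(g) = \Lipa(P_t f) \le \omega(t)\sqrt{P_t|\rmD f|^2}$ pointwise on $X$, which is a \emph{continuous} (indeed upper semicontinuous, hence a pointwise infimum of continuous) function independent of $n$. Combining the $\meas_n$-a.e.~inequality $|\bb_{n(k)}(g)| \le |\bb_{n(k)}|\,\Lipa(g)$ with weak $L^2_\loc$ convergence of $|\bb_{n(k)}|$ to some $h$ (extracting a further subsequence, using the $L^2$ bound implicit in the superlinear $\Theta$-bound on bounded sets) and the semicontinuity-duality argument recorded in the paragraph on weak convergence, I would pass to the limit to obtain $|\bb(g)| \le h\,\omega(t)\sqrt{P_t|\rmD f|^2}$, $\meas$-a.e. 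Letting $t \downarrow 0$ along rationals and using $P_t|\rmD f|^2 \to |\rmD f|^2$ together with $\omega(0_+)=1$ recovers the sharp locality bound $|\bb(g)| \le h\,|\rmD g|$ on the dense class, and the quadraticity assumption (a) via \autoref{lem:stimanorma2} guarantees that the resulting functional extends uniquely to a derivation $\bb \in \Der(X,\dist,\meas)$ with $|\bb| \le h$.

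The hard part will be making the locality passage rigorous despite the absence of a pointwise tangent bundle: the inequality $|\bb_{n(k)}(g)| \le |\bb_{n(k)}|\,\Lipa(g)$ mixes the weakly converging sequence $\bb_{n(k)}(g)$ with the weakly converging factor $|\bb_{n(k)}|$ and the fixed upper semicontinuous multiplier $\Lipa(g)$, so one cannot simply pass signs to the limit. I would handle this exactly as in the remark following the definition of weak $L^p$ convergence, namely by testing against nonnegative $\phi \in \Cbs(X)$ and writing $\Lipa(g)$ (or its majorant $\omega(t)\sqrt{P_t|\rmD f|^2}$) as a decreasing infimum of bounded continuous functions; the product of a weakly $L^1$-convergent sequence with a fixed continuous bounded weight passes to the limit, and the upper semicontinuity yields the correct inequality in the limit rather than the wrong one. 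This is the single delicate estimate; everything else is extraction and bookkeeping.

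Finally, for the lower semicontinuity of the $\Theta$-energy when $\Theta$ is convex and lower semicontinuous, I would invoke the second assertion of \autoref{lem:DPett}~(ii) applied to the norms $|\bb_{n(k)}|$: since these converge weakly in $L^1$ (after the extraction above) to a function dominating $|\bb|$, the convex lower-semicontinuous functional $g \mapsto \int_X \Theta(|g|)\di\meas$ satisfies
$$
\int_X\Theta(|\bb|)\di\meas \le \int_X \Theta(h)\di\meas \le \liminf_{k\to\infty}\int_X\Theta(|\bb_{n(k)}|)\di\meas_{n(k)} < \infty,
$$
where the first inequality uses $|\bb| \le h$ together with monotonicity of $\Theta$ (which superlinear growth makes available after normalizing $\Theta$ to be nondecreasing). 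This closes the argument.
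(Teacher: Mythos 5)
Your overall strategy coincides with the paper's: extract a diagonal subsequence over the countable class $P_{\setQ_+}\Algebra_\bs$ using \autoref{lem:DPett}~(ii), pass to the limit in the locality inequality $|\bb_n(a)|\le|\bb_n|\Lipa(a)$ by testing against nonnegative $\phi\in\Cbs(X)$ and writing the upper semicontinuous multiplier as an infimum of bounded continuous functions, extend the resulting $\setQ$-linear functional to a derivation, and obtain the lower semicontinuity of the $\Theta$-energy from the second assertion of \autoref{lem:DPett}~(ii) applied to $|\bb_n|$ together with $|\bb|\le h$. This is exactly the paper's proof, which in turn refers back to the limiting argument of \autoref{thm:compactness}.

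Two details need correcting. First, your claim that the superlinear $\Theta$-bound yields an $L^2$ bound on $|\bb_n|$ on bounded sets, and hence weak $L^2_\loc$ convergence to $h$, is false: $\Theta(z)=z\log(1+z)$ has more than linear growth but controls no $L^p$ norm with $p>1$. The correct (and sufficient) statement is that $|\bb_n|$ has weak $L^1_\loc$ limit points, again by \autoref{lem:DPett}~(ii), and the product-passage argument with a fixed bounded upper semicontinuous majorant works verbatim for weak convergence of the measures $|\bb_n|\meas_n$; this is what the paper does. Second, the step that turns the bound $|\bb(a)|\le h\Lipa(a)$ on $P_{\setQ_+}\Algebra_\bs$ into a genuine derivation with $|\bb|\le h$ is \autoref{lem:stimanorma1}~(ii), not \autoref{lem:stimanorma2} (the latter only gives uniqueness of the extension); your sketch of letting $t\downarrow 0$ in \eqref{eq:basic1} to sharpen $\Lipa$ to $|\rmD \plchldr|$ is precisely the internal argument of that lemma, so citing it directly suffices. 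With these repairs your argument is the paper's proof.
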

\begin{proof}
Let $a \in P_{\setQ_+}\Algebra_\bs$. Since $|\bb_n(a)|$ satisfies the assumptions of \autoref{lem:DPett}~(ii), 
up to a subsequence there exists a weak $L^1$ limit, that we denote by $\bb(\phi)$. By a diagonal argument, we may assume that $\bb_n(a)$ weakly $L^1_\loc$ 
converge to $\bb(a)$ for every $a \in P_{\setQ_+}\Algebra_\bs$. For the same reason is also not restrictive to assume that $|\bb_n|$ weakly converge in $L^1_\loc$ to
$h$.

Since $\setQ$-linearity is preserved by weak local limits, taking \autoref{lem:stimanorma1}~(ii) into account it is sufficient to prove
 $$
 |\bb(a)|\leq h\Lipa (a)\qquad\text{$\meas$-a.e.~in $X$, for all $a\in P_{\setQ_+}\Algebra_\bs$.}
$$
This can be achieved as in the proof of \autoref{thm:compactness} passing to the limit in the inequality
$|\bb_n(a)|\leq |\bb_n|\Lipa (a)$ and using the upper semicontinuity of the asymptotic Lipschitz constant.\qedhere
\end{proof}

\begin{remark} [Comparison of $\bb_n\Weakto{\Algebra_\bs}\bb$ and $\bb_n\Weakto{P\Algebra_\bs}\bb$]\label{rem:compaweak}
Assume that we are in the setting of both \autoref{thm:compactness} and \autoref{thm:compa2}, namely with uniform local equi-integrability bounds on the divergences of $\bb_n$
and assumptions (a), (b) on the limit metric measure structure. Under this assumption, since both notions of convergence are sequentially compact,
we immediately get from \autoref{lem:continuity_div} that $\bb_n\Weakto{\Algebra_\bs}\bb$ if and only if $\bb_n\Weakto{P\Algebra_\bs}\bb$.
\end{remark}

\section{Strong convergence of derivations}

Still under the assumptions (a), (b) of the previous section on the metric measure structure, we want to study conditions under which
weak convergence in duality with $P_{\setQ^+}\Algebra_\bs$ improves to strong convergence, defined as
follows. 

\begin{definition}[Strong convergence of derivations]
We say that $\bb_n\in\Der^1_\loc(X,\dist,\meas_n)$ strongly converge to $\bb\in\Der^1_\loc(X,\dist,\meas)$ if,
for all $f\in P_{\setQ^+}\Algebra_\bs$, $\bb_n(f)$ converge in measure to $\bb(f)$. For $p\in (1,\infty)$,
if for all $f\in P_{\setQ_+}\Algebra_\bs$ the stronger condition 
\begin{equation*}
\limsup_{n\to\infty}\int_{B_R(\bar x)}|\bb_n(f)|^p\di\meas_n\leq\int_{B_R(\bar x)}|\bb(f)|^p\di\meas<\infty
\end{equation*}
holds for an unbounded family of $R$'s,  we say that $\bb_n$ converge strongly in $L^p_\loc$ to $\bb$. Analogously,
if
$$
\limsup_{n\to\infty}\int_X|\bb_n(f)|^p\di\meas_n\leq\int_X|\bb(f)|^p\di\meas<\infty
$$
we say that $\bb_n\to\bb$ strongly in $L^p$.

\end{definition}

The terminology is a bit misleading, because strong convergence of $\bb_n$ should 
be understood as a ``componentwise'' strong convergence, where the components
of the ``vector field'' are given by $P_{\setQ_+}\Algebra_\bs$. As discussed in \autoref{rem:sono_simili}, 
under uniform $L^p_\loc$ bounds on $\bb_n(f)$ for some $p>1$, the convergence in measure
of $\bb_n(f)$ to $\bb(f)$ yields convergence in $L^q_\loc$ for all $q\in (1,p)$ and therefore 
$\bb_n\Weakto{P\Algebra_\bs}\bb$. 


The main goal of this section is to provide a sufficient condition for strong convergence. To this aim, it is
convenient to extend $\Algebra_\bs$ to $\Algebra_*$, defined as follows:
$\Algebra_*$ is the smallest algebra, lattice and vector space over $\setQ$ containing $\Algebra$ and
the set $P_{\setQ_+}\Algebra_\bs$ in \eqref{eq:def_Cinfty}.
The set $\Algebra_*$ is still countable and we denote by $a$ the generic element of $\Algebra_*$.

In the following lemma we show how to to build a derivation from a functional $\cc:\Algebra_*\to L^0(X,\meas)$ satisfying
a (very) weak locality property and a growth bound with the global Lipschitz constant, see \eqref{eq:very_weak}.

\begin{lemma}\label{lem:stimanorma1}
Assume that $(X,\dist,\meas)$ satisfies the regularity assumptions (a), (b).
\begin{itemize}
\item[(i)] Let $\cc:\Algebra_*\to L^0(X,\meas)$ be a linear functional satisfying the locality property $\cc(a)=0$ $\meas$-a.e.
in any open set $U$ where $a$ is constant, and 
\begin{equation}\label{eq:very_weak}
|\cc(a)|\leq h\Lip(a)\qquad\text{$\meas$-a.e.~in $X$, for all $a\in\Algebra_*$}
\end{equation}
for some $h\in L^0(X,\meas)$. Then $\cc$ is a pre-derivation, more precisely, \eqref{eq:very_weak} holds with
$\Lipa (a)$ in the right hand side.
\item[(ii)] If $\cc:P_{\setQ_+}\Algebra_\bs\to L^0(X,\meas)$ is $\setQ$-linear and satisfies
\begin{equation}\label{eq:very_weak_bis}
 |\cc(a)|\leq h\Lipa (a)\qquad\text{$\meas$-a.e.~in $X$, for all $a\in P_{\setQ_+}\Algebra_\bs$,}
\end{equation}
for some $h\in L^0(X,\meas)$, then there exists a derivation $\bb$ satisfying $|\bb|\leq h$ which extends $\cc$. 
\end{itemize}
\end{lemma}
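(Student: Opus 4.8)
For part (i), the plan is a localization argument exploiting that the hypotheses determine $\cc(a)$ only through the \emph{germ} of $a$. First, from $\setQ$-linearity and the stated locality I observe that if $a,a'\in\Algebra_*$ coincide on an open set $U$, then $\cc(a)=\cc(a')$ $\meas$-a.e.\ on $U$: indeed $a-a'$ is constant ($=0$) on $U$, so $\cc(a-a')=0$ there. Fixing $a\in\Algebra_*$ and a radius $r>0$, I set $L:=\Lip(a\restr_{B_r(\bar x)})$ and, for $\eps>0$, I seek a competitor $\tilde a\in\Algebra_*$ that agrees with $a$ on a smaller ball $B_{r'}(\bar x)$ and has global Lipschitz constant $\Lip(\tilde a)\le L+\eps$. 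Such a $\tilde a$ is obtained by modifying $a$ near $\bar x$ against the two $(L+\eps)$-Lipschitz cones $a(\bar x)\pm(L+\eps)\dist(\bar x,\plchldr)$, using the lattice ($\wedge,\vee$) and $\setQ$-vector space operations of $\Algebra_*$, with the cones approximated by $\setQ$-multiples of truncated distance functions $\min\{\dist(\plchldr,x_0),k\}$, $x_0\in D$ near $\bar x$ and $k$ large. Applying \eqref{eq:very_weak} to $\tilde a$ and the germ-locality then gives $|\cc(a)|\le h(L+\eps)$ $\meas$-a.e.\ on $B_{r'}(\bar x)$; letting $\eps\downarrow 0$, then $r\downarrow 0$, and using $\Lipa a(x)=\lim_{r\to 0}\Lip(a\restr_{B_r(x)})$ from \eqref{eq:deflipa} yields $|\cc(a)|\le h\,\Lipa(a)$ $\meas$-a.e. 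The main obstacle here is the construction of $\tilde a$ \emph{inside} the countably generated algebra $\Algebra_*$ with the \emph{sharp} constant: a naive cutoff against a single cone or a multiplicative cutoff does not stay globally Lipschitz with constant close to $L$, so one must realize a McShane-type truncation by finite lattice combinations of admissible generators while keeping both the agreement near $\bar x$ and the Lipschitz bound under control.

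For part (ii), I would first extend $\cc$ by $\setQ$-linearity to the $\setQ$-linear span of $P_{\setQ_+}\Algebra_\bs$, where \eqref{eq:very_weak_bis} persists by subadditivity of $\Lipa$. The core is to define $\cc(f)$ for $f\in\Algebra_\bs$ (and hence, by density, on $H^{1,2}(X,\dist,\meas)$) as a limit of $\cc(P_tf)$ as $\setQ_+\ni t\downarrow 0$, upgrading the bound from $\Lipa$ to the minimal relaxed slope. Here assumption (b) is essential: for $a=P_tf$ it gives $|\cc(P_tf)|\le h\,\Lipa(P_tf)\le h\,\omega(t)\sqrt{P_t|\rmD f|^2}$ $\meas$-a.e., and since $P_t$ is an $L^2$-contraction with $\sqrt{P_t|\rmD f|^2}\to|\rmD f|$ in $L^2$ and $\omega(t)\to 1$, this furnishes uniform $L^2_\loc$ bounds on $\cc(P_tf)$ together with weak $L^1_\loc$ compactness of its limit points via \autoref{lem:DPett}~(i). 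Assumption (a), through the Hilbert module structure of the cotangent bundle recalled in \autoref{rdual} and \autoref{tgigli}, is what guarantees that the limit is uniquely determined and $\setQ$-linear, so that $\cc(f):=\lim_{t\downarrow 0}\cc(P_tf)$ is well defined and satisfies $|\cc(f)|\le h|\rmD f|$ $\meas$-a.e.; here \autoref{lem:stimanorma2} plays the role of guaranteeing that values on $P_{\setQ_+}\Algebra_\bs$ rigidify the object.

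After a preliminary reduction to $h\in L^2_\loc(X,\meas)$ (multiplying $\cc$ by a positive Borel weight and dividing at the end, as in the proof of \autoref{lem:strict_convexity}), the functional obtained above is linear on a dense subspace of $H^{1,2}(X,\dist,\meas)$ with $|\cc(f)|\le h|\rmD f|$, hence extends to a continuous $L^\infty(X,\meas)$-module homomorphism $T^*(X,\dist,\meas)\to L^1(X,\meas)$; by the converse described in \autoref{rdual} this corresponds to a derivation $\bb\in\Der^2_\loc(X,\dist,\meas)$ with $|\bb|\le h$, defined on all of $\Lipb(X)$ and extending $\cc$. The delicate step is precisely the passage $t\downarrow 0$: since $P_tf\to f$ only in $H^{1,2}$, a topology far weaker than convergence of asymptotic Lipschitz constants, the pointwise bound through $\Lipa$ cannot be passed to the limit directly, and it is exactly the $L^2$-integrated estimate supplied by (b), combined with the Hilbertian duality granted by (a), that bridges this gap and pins down a genuine, uniquely determined derivation.
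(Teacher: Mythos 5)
Your overall architecture matches the paper's (germ‑locality plus a lattice competitor built from truncated distance functions in part (i); using assumption (b) to upgrade the $\Lipa$ bound to a $|\rmD\plchldr|$ bound and then extending by $H^{1,2}$‑density in part (ii)), but in both parts the decisive step is missing or wrong. In (i), the competitor you sketch --- sandwiching $a$ between the two truncated cones $a(\bar x)\pm(L+\eps)\min\{\dist(\plchldr,x_0),k\}$ --- does not have global Lipschitz constant close to $L$: the truncated cones are eventually constant, so far from $\bar x$ the median coincides with $a$ wherever $a$ happens to lie between them, and there it inherits the full Lipschitz constant of $a$. You correctly name this as ``the main obstacle'' but do not resolve it, and the two‑sided route you indicate cannot be made to work. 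The paper's construction is one‑sided: $A=\max\bigl\{\min\{|a-a_0|,\tfrac r2 L_U\},\,L_U\min\{\dist(\plchldr,x_0'),k\}\bigr\}$, where the truncation level $\tfrac r2L_U$ is dominated by the cone outside $B_{r/2}(x_0')$, so that $A$ equals the cone away from the small ball and is globally $L_U$‑Lipschitz; the price is that $A$ agrees with $|a-a_0|$ (not with $a$) near $x_0$, which forces the additional step of recovering $\cc(a)$ from $\cc(A)$ by locality separately on $\{a>a_0\}$ and $\{a<a_0\}$ and by varying $a_0$. None of this is in your proposal, and it is the actual content of part (i).

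In (ii) there are two genuine gaps. First, the claim that \eqref{eq:very_weak_bis} ``persists on the $\setQ$‑linear span by subadditivity of $\Lipa$'' is false: subadditivity gives $\Lipa(a+b)\le\Lipa(a)+\Lipa(b)$, hence only $|\cc(a+b)|\le h(\Lipa(a)+\Lipa(b))$, which is strictly weaker than the bound with $\Lipa(a+b)$. Second, and more seriously, the existence of $\lim_{\setQ_+\ni t\downarrow0}\cc(P_tf)$ is not established: \autoref{lem:DPett}~(i) yields weak $L^1_\loc$ limit points, but nothing in your argument shows they coincide --- \autoref{lem:stimanorma2} concerns derivations determined by their values on $P_{\setQ_+}\Algebra_\bs$ and says nothing about distinct limit points of the net $\cc(P_{t}f)$. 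The missing ingredient is the semigroup commutation used in the paper: writing $a=P_ta'$, one has $\cc(P_sa)-\cc(a)=\cc\bigl(P_t(P_sa'-a')\bigr)$, which assumption (b) bounds by $\omega(t)\,h\sqrt{P_t\bigl(|\rmD(P_sa'-a')|^2\bigr)}\to0$ as $s\downarrow0$. This single computation both upgrades \eqref{eq:very_weak_bis} to $|\cc(a)|\le h|\rmD a|$ on $P_{\setQ_+}\Algebra_\bs$ and shows that approximating sequences are Cauchy $\meas$‑a.e., after which the extension to $\Lip_\bs(X)$ (or, as you propose, to a module homomorphism on $T^*(X,\dist,\meas)$, which is a legitimate variant once $h\in L^2_\loc$) is routine. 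Without it, neither the passage from $\Lipa$ to the minimal relaxed slope nor the well‑posedness of your limiting definition is justified.
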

\begin{proof}(i) Let $D$ be as in \eqref{eq:setD}. We want to improve the bound on $|\cc(a)|$ to $|\cc(a)|\leq h\Lipa (a)$ $\meas$-a.e.~in $X$ for
all $a\in\Algebra_*$. To this aim, we fix $a\in\Algebra_*$ and a bounded open set $U\subset X$ and define
$L_U$ to be the Lipschitz constant of the restriction of $a$ to $U$. For $x_0\in U$, choose $x_0'\in D$ and $r>0$ sufficiently
small, such that $B_{r/2}(x_0')\subset B_r(x_0)\subset U$, $rL_U/2\in\setQ$. Then, for
$k\in\setQ^+$ larger than $r/2$ and $a_0\in\setQ$, $a_0\neq a(x_0)$, we consider the function $A=\max\{A_1,A_2\}\in\Algebra_*$, with
$$
A_1(x):=\min\{|a(x)-a_0|,\frac {r}2 L_U\},\qquad A_2(x):=L_U\min\{\dist(x,x_0'),k\}.
$$
Since ${\rm Lip}(A_1\vert_U)\leq L_U$, ${\rm Lip}(A_2)\leq L_U$ and the set $\{A=A_1\}$
is contained in $B_{r/2}(x_0')$, which is contained in $B_r(x_0)$ and in $U$, 
we obtain that $\Lip(A)\leq L_U$. Choosing $a_0$ sufficiently close to $a(x_0)$ we obtain that
$A_1=|a-a_0|$ in a neighbourhood $V$ of $x_0$, and choosing $x_0'$ with
$$
L_Ud(x_0,x_0')<|a(x_0)-a_0|
$$
we obtain that $A=|a-a_0|$ in a neighbourhood $V$ of $x_0$.
Hence, locality in $V\cap\{a>a_0\}$ and in $V\cap \{a<a_0\}$ give
$$
|\cc(a)|=|\cc(A)|\leq h\Lip(A)\leq h L_U\qquad\text{$\meas$-a.e.~in $V\setminus\{a=a_0\}$.}
$$ 
With two different choices of $a_0$, this proves that for any $x_0\in U$ there exists a neighbourhood $V$ of $x_0$ 
such that $|\cc(a)|\leq hL_U$ $\meas$-a.e.~in $V$.
Since $x_0\in U$ is arbitrary, and since $\meas$ is concentrated on a $\sigma$-compact set,
it follows that $|\cc(a)|\leq h L_U$ $\meas$-a.e.~in $U$. If $\{U_i\}$ is a countable basis for the open sets
of $(X,\dist)$ we can find a $\meas$-negligible set $N$ such that
$$
|\cc(a)|(x)\leq h(x)L_{U_i}\qquad\forall x\in U_i\setminus N,\,\,\forall i.
$$
Since $\Lipa a(x)=\inf_{U_i\ni x}L_{U_i}$ for all $x\in X$, we conclude.

(ii) First we improve the upper bound \eqref{eq:very_weak_bis} to 
\begin{equation}\label{eq:Young6}
|\cc(a)|\leq h|\rmD a|\qquad\text{$\meas$-a.e.~in $X$, for all $a\in P_{\setQ^+}\Algebra_\bs$.}
\end{equation} 
Indeed, thanks to assumption (b), one has
$$
|\cc(P_s a)|\leq \omega(s) h \sqrt{P_s|\rmD a|^2}
\quad\text{$\meas$-a.e.~in $X$, for all $s\in\setQ^+$, $a\in\Algebra_\bs$}
$$
and, writing $a=P_ta'$, we can use the linearity of $P_t$ to get
$$
|\cc(P_s a)-\cc(a)|=|\cc(P_t(P_sa'-a'))|\leq\omega(t) h \sqrt{ P_t \bigl(|\rmD (P_s a'-a')|^2\bigr)}.
$$
Then, taking the limit as $\setQ^+\ni s\to 0^+$ of the inequality
provides the result. Then, we build $\bb$ on $\Lip_\bs(X)$ by approximation, as a simple consequence of \eqref{eq:Young6} and the density
of $P_{\setQ^+}\Algebra_\bs$ in $H^{1,2}(X,\dist,\meas)$, arguing as in \autoref{lem:extension-pre-derivation}: if
$f\in\Lip_\bs(X)$ and $a_n\in P_{\setQ^+}\Algebra_\bs$ satisfy $|\rmD (a_n-f)|\to 0$ in $L^2(X,\meas)$ and $\meas$-a.e., 
thanks to \eqref{eq:Young6} the sequence $\bb(a_n)$ converges $\meas$-a.e.~and we define $\bb(f)$ as its limit. This
provides a derivation $\bb$ with $|\bb|\leq h$ and $\bb\equiv\cc$ on $P_{\setQ^+}\Algebra_\bs$.
\end{proof}

In the following theorem we show how to improve weak convergence, in duality with $P_{\setQ_+}\Algebra_\bs$,
to strong convergence. In connection with the theory of flows, we shall also consider time-dependent derivations and
therefore a time averaged version of weak convergence, deriving as a consequence strong convergence in measure,
with $\leb^1\times\meas_n$ and $\leb^1\times\meas$ as reference measures. 
Of course in the simpler autonomous case we get the improvement from weak to strong convergence.

\begin{theorem} \label{thm:strong-stability-hilbert} Assume that $(X,\dist,\meas)$ satisfies the regularity assumptions (a), (b), 
that $\bb_{n,t}\in\Der^1_\loc(X,\dist,\meas_n)$, $\bb_t\in\Der^1_\loc(X,\dist,\meas)$ satisfy
$$
\lim_{n\to\infty}\int_0^T\chi(t)\int_X\bb_{n,t}(a)v\di\meas_n\di t=
\int_0^T\chi(t)\int_X\bb_t(a)v\di\meas\di t
$$
for all $\chi\in C_c(0,T),\,\,v\in\Cbs(X),\,\,a\in P_{\setQ_+}\Algebra_\bs$, and that
\begin{equation}
\label{eq:lim-sup-theta-bis}
\limsup_{n\to\infty}\int_0^T\int_X\Theta(|\bb_{n,t}|)\di\meas_n\,\di t\leq
\int_0^T\int_X\Theta(|\bb|_t)\di\meas\,\di t<\infty
\end{equation}
with $\Theta:[0,\infty)\to [0,\infty)$ strictly convex and having more than linear growth at infinity.
Then $\bb_{n,t}$ converge strongly to $\bb_t$ and, if $\Theta(z)=|z|^p$ for some $p>1$, $\bb_n$ converge
to $\bb$ strongly in $L^p$.
\end{theorem}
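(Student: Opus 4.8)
The plan is to run a Young-measure argument in the spirit of \cite{Valadier} and \cite[Section~5.4]{AmbrosioGigliSavare08}, the only genuine difficulty being that, lacking a pointwise description of the tangent bundle, we cannot regard $\bb_{n,t}$ as taking values in a fixed vector space. I would circumvent this by recording the whole countable family of scalar values together with the pointwise norm. Concretely, set $W:=\setR^{P_{\setQ_+}\Algebra_\bs}\times[0,\infty]$ (a Polish space, the first factor carrying the product topology), define $\Psi_n\colon(0,T)\times X\to W$ by $\Psi_n(t,x):=\big((\bb_{n,t}(a)(x))_{a\in P_{\setQ_+}\Algebra_\bs},\,|\bb_{n,t}|(x)\big)$, and let $\Lambda_n$ be the push-forward of $\leb^1\times\meas_n$ under $(t,x)\mapsto\big(t,x,\Psi_n(t,x)\big)$, a measure on $(0,T)\times X\times W$. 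The superlinear bound \eqref{eq:lim-sup-theta-bis} controls the last coordinate $\rho$, while the $\meas_n$-a.e.\ inequality $|\bb_{n,t}(a)|\le|\bb_{n,t}|\Lipa(a)$, with $\Lipa(a)$ \emph{independent of the measure}, controls the coordinates $v_a$ in terms of $\rho$; together with the local tightness of $\meas_n$ this yields local tightness of $(\Lambda_n)$ with respect to the $X$-variable. Hence, up to a subsequence, $\Lambda_n\to\Lambda$ in duality with $\Cbsx((0,T)\times X\times W)$, with $(t,x)$-marginal $\leb^1\times\meas$, and disintegrating $\Lambda$ produces Young measures $\nu_{t,x}\in\Prob(W)$.

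Next I would identify the barycenter and the energy of $\Lambda$. Passing to the limit in the weak convergence hypothesis — justifying the unbounded coordinate test function $v_a$ by a truncation/uniform-integrability argument built on the superlinear bound, as in \autoref{prop:passa_al_limite} and \autoref{lem:DPett} — gives $\int_W v_a\,d\nu_{t,x}=\bb_t(a)(x)$ for every $a$ and $\leb^1\times\meas$-a.e.\ $(t,x)$. Since $\rho$ is a lower semicontinuous coordinate and $\Theta$ is convex and lower semicontinuous, the analogue of \autoref{lem:DPett}~(ii) gives $\int_W\Theta(\rho)\,d\Lambda\le\liminf_n\int_0^T\!\int_X\Theta(|\bb_{n,t}|)\,d\meas_n\,dt$, which combined with \eqref{eq:lim-sup-theta-bis} yields $\int\!\!\int\!\!\int_W\Theta(\rho)\,d\nu_{t,x}\,d\meas\,dt\le\int_0^T\!\int_X\Theta(|\bb_t|)\,d\meas\,dt$. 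On the other hand, the bound $|v_a|\le\rho\,\Lipa(a)$ is a closed condition on $W$ stable under weak convergence, so it holds $\Lambda$-a.e.; by \autoref{lem:stimanorma1}~(ii) — this is where assumptions (a), (b) enter — the realisation $a\mapsto v_a(\omega)$ extends $\nu_{t,x}$-a.e.\ to a derivation $\cc_\omega\in\Der(X,\dist,\meas)$ with $|\cc_\omega|\le\rho(\omega)$, and its coordinatewise barycenter $\int_W\cc_\omega\,d\nu_{t,x}$ equals $\bb_t$. In particular $|\bb_t|\le\int_W\rho\,d\nu_{t,x}$, and since $\Theta$ is convex (and nondecreasing) Jensen's inequality gives the reverse estimate $\int_0^T\!\int_X\Theta(|\bb_t|)\,d\meas\,dt\le\int\!\!\int\!\!\int_W\Theta(\rho)\,d\nu_{t,x}\,d\meas\,dt$.

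All the displayed inequalities are therefore equalities, and this is where rigidity closes the argument. Equality in Jensen together with the \emph{strict} convexity of $\Theta$ forces $\nu_{t,x}$ to be concentrated on $\{\rho=|\bb_t|(x)\}$ for a.e.\ $(t,x)$, so that $\nu_{t,x}$-a.e.\ one has $|\cc_\omega|\le|\bb_t|$ while $\int_W\cc_\omega\,d\nu_{t,x}=\bb_t$. Since assumption (a) makes $\Der^2(X,\dist,\meas)$ a Hilbert module (\autoref{tgigli}), whose inner product $\langle\cdot,\cdot\rangle_x$ — like the norm — can be evaluated on the countable coordinates thanks to \autoref{lem:stimanorma2} and \autoref{lem:stimanorma1}, we obtain
\[
|\bb_t|^2=\big\langle\textstyle\int\cc_\omega\,d\nu_{t,x},\int\cc_{\omega'}\,d\nu_{t,x}\big\rangle_x=\int\!\!\int\langle\cc_\omega,\cc_{\omega'}\rangle_x\,d\nu_{t,x}\,d\nu_{t,x}\le\int\!\!\int|\cc_\omega|\,|\cc_{\omega'}|\,d\nu_{t,x}\,d\nu_{t,x}\le|\bb_t|^2,
\]
whence, exactly as in \autoref{lem:strict_convexity}, the $\cc_\omega$ are $\nu_{t,x}$-a.e.\ parallel with common norm $|\bb_t|$ and thus $\cc_\omega=\bb_t$. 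Consequently each $\nu_{t,x}$ is the Dirac mass at $\Psi(t,x):=\big((\bb_t(a)(x))_a,|\bb_t|(x)\big)$, so for every $a$ the $v_a$-marginal is $\delta_{\bb_t(a)(x)}$; by \autoref{prop:convergence-in-measure} and \autoref{rem:sono_simili} this means $\bb_{n,t}(a)\to\bb_t(a)$ in measure, i.e.\ strong convergence, and when $\Theta(z)=|z|^p$ the extra equality $\int\Theta(\rho)\,d\Lambda=\int_0^T\!\int_X|\bb_t|^p\,d\meas\,dt$ upgrades this to strong convergence in $L^p$. I expect the main obstacle to be precisely the two steps forced by the absent pointwise tangent bundle: transferring the metric constraint to the realisations and reconstructing the derivations $\cc_\omega$ through \autoref{lem:stimanorma1}, and then carrying out the Hilbert-module rigidity entirely through the countable values on $P_{\setQ_+}\Algebra_\bs$.
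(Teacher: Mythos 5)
Your overall architecture — push $\leb^1\times\meas_n$ forward onto a space recording the countable family of values $\bb_{n,t}(a)(x)$ together with the norm, extract a Young-measure limit $\nu_{t,x}$, identify its barycenter with $\bb_t$, and use the energy equality plus strict convexity of $\Theta$ to force $\rho=|\bb_t|(x)$ on the support — is exactly the paper's (its space $\Algebra_*'$ of $\setQ$-linear functionals with $|L(a)|\le C\Lip(a)$ plays the role of your $W$). Up to and including the rigidity of the norm coordinate, your argument is sound. The gap is in the final Dirac-mass step. You claim that for $\nu_{t,x}$-a.e.\ $\omega$ the realisation $a\mapsto v_a(\omega)$ "extends by \autoref{lem:stimanorma1}~(ii) to a derivation $\cc_\omega\in\Der(X,\dist,\meas)$ with $|\cc_\omega|\le\rho(\omega)$". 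This does not work: \autoref{lem:stimanorma1}~(ii) takes as input a functional $\cc:P_{\setQ_+}\Algebra_\bs\to L^0(X,\meas)$ satisfying $|\cc(a)|\le h\,\Lipa(a)$ \emph{$\meas$-a.e.\ in $X$}, whereas a single $\omega$ carries only scalars $v_a(\omega)$, and the closed constraint you propagate through the limit, $|v_a(\omega)|\le\rho(\omega)\Lipa(a)(x)$, is tied to the one base point $x$ at which $\omega$ sits under $\Lambda$. Interpreting $v_a(\omega)$ as a constant function would require the bound with $\operatorname{ess\,inf}_X\Lipa(a)$, which you do not have. This is not a technicality: a derivation is a global section, and the whole difficulty of the theorem is precisely that no pointwise tangent space exists. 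Indeed your own endgame exposes the inconsistency — you conclude $\cc_\omega=\bb_t$ as derivations while $\cc_\omega(a)$ is the constant $v_a(\omega)$, which would force $\bb_t(a)$ to be constant. The subsequent double integral $\int\!\!\int\langle\cc_\omega,\cc_{\omega'}\rangle_x\,\di\nu_{t,x}\,\di\nu_{t,x}$ is therefore not defined (and would in any case need Bochner measurability of $\omega\mapsto\cc_\omega$ and a Fubini argument over the $x$-dependent family $\nu_{t,x}$, neither of which you supply).

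The paper closes this step differently, and you should adopt its device: it never promotes individual points of the dual space to derivations, but only \emph{barycenters over measurable subsets}. For a closed set $E$ it forms $\cc^1_t(a)(x):=\int_E L(a)\,\di\sigma_{t,x}(L)$ and $\cc^2_t(a)(x):=\int_{E^c}L(a)\,\di\sigma_{t,x}(L)$; these genuinely are $L^0(X,\meas)$-valued (the $x$-dependence enters through $\sigma_{t,x}$), inherit the weak locality and the $\Lip$ bound needed for \autoref{lem:stimanorma1}~(i,ii), and yield derivations $\bb^1_t,\bb^2_t$ with $\bb^1_t+\bb^2_t=\bb_t$ and $|\bb^1_t|+|\bb^2_t|\le|\bb_t|$. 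Then \autoref{lem:strict_convexity} (this is where the Hilbert-module structure from \autoref{tgigli} is actually used, rather than your pointwise Cauchy--Schwarz over $\nu_{t,x}\otimes\nu_{t,x}$) gives $\bb^1_t(a)\,\bb^2_t(a)\ge0$; letting $E$ range over a countable generating family and invoking the halfspace characterisation of Dirac masses on $\{\|L\|=c\}$ yields $\sigma_{t,x}=\delta_{\Sigma(t,x)}$. Your Step on deducing convergence in measure from the convergence of the push-forwards, once the Dirac-mass property is established, matches the paper and is fine.
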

\begin{proof} {\it Step 1: the dual $\Algebra_*'$ of $\Algebra_*$.} The $\setQ$-linear 
functionals from $\Algebra_*$ to $\setR$
can be considered, in some sense, as ``pointwise'' tangent vectors. More 
precisely, we denote by $\Algebra_*'$ the class of $\setQ$-linear 
functionals
$L:\Algebra_*\to\setR$ satisfying $|L(a)|\leq C\Lip(a)$ for all 
$a\in\Algebra_*$, for some $C\in [0,\infty)$. The smallest $C$ will then
be denoted by $\|L\|$. We endow $\Algebra_*'$ with the coarser topology 
that makes all maps
$L\mapsto L(a)$, $a\in\Algebra_*$, continuous, so that $L\mapsto\|L\|$ 
is lower semicontinuous in $\Algebra_*'$ and the
sets $\{L\in\Algebra_*':\ \|L\|\leq c\}$ are compact for all $c\geq 0$.
Since $\Algebra$ is countable we can easily find a distance in 
$\Algebra_*'$
which induces this topology (on bounded sets).

\noindent
{\it Step 2: limit of $\bb_{n,t}$ in the sense of Young.}  Let us consider the maps $\Sigma_n:(0,T)\times X\to\Algebra_*'$ induced by $\bb_{n,t}$, namely
$$
\Sigma_n(t,x)(a):=\bb_{n,t}(a)(x)
$$
and the push forward measures $$\sigma_n:=(\Id\times\Sigma_n)_\#\leb^1\times\meas_n\in\Meas^+((0,T)\times X\times\Algebra_*').$$ Notice that,
since $\Algebra_*$ is countable, $\sigma_n$ is independent of the choice of Borel representatives of $\bb_{n,t}(a)$,
$a\in\Algebra_*$, and that $\|\Sigma_n(t,x)\|\leq |\bb|_{n,t}(x)$ for $\leb^1\times\meas_n$-a.e.~$(t,x)$. 
Since $\leb^1\times\meas_n$ weakly converge to $\leb^1\times\meas$, they are tight on bounded subsets, hence thanks to
Prokhorov theorem and 
the uniform bound on $\int\Theta(\|L\|)\di\sigma_n$, we can assume with no loss of generality that $\sigma_n$ weakly converge to $\sigma$ in
$(0,T)\times X\times\Algebra_*'$. Using test functions of the form $\psi(t,x)$ we can represent $\sigma=\sigma_{t,x}\otimes(\leb^1\times\meas)$,
i.e.
$$
\int_{(0,T)\times X\times\Algebra_*'}\phi(t,x,a)\di\sigma=\int_0^T\int_X\int_{\Algebra_*'}\phi(t,x,a)\di\sigma_{t,x}(a)\,\di x\,\di t.
$$
From now on, for simplicity of notation, we shall omit the integration domain $(0,T)\times X\times\Algebra_*'$. 

Let $a\in\Algebra_*$ be constant in an open set $A\subset X$.
Using test functions of the form $(t,x,L)\mapsto\chi(t)v(x)|L(a)|$ with $v\in \Cbs(X)$ null on $X\setminus A$
we see immediately that $\sigma_{t,x}$ satisfy the following locality property: 
for $\leb^1\times\meas$-a.e.~$(t,x)\in (0,T)\times A$ one has $L(a)=0$ $\sigma_{t,x}$-a.e.~in $\Algebra_*'$.
Thanks to this property, defining
\begin{equation}\label{eq:march1}
\tilde\bb_t(a)(x):=\int_{\Algebra_*'} L(a)\di\sigma_{t,x}(L)
\end{equation}
using the inequality $|\tilde\bb_t(a)(x)|\leq\Lip(a)\int_{\Algebra_*}\|L\|\di\sigma_{t,x}(L)$ and invoking
\autoref{lem:stimanorma1}~(ii) we obtain
\begin{equation}\label{eq:march11}
|\tilde\bb_t(a)(x)|\leq \Lipa(a)\int_{\Algebra_*}\|L\|\di\sigma_{t,x}(L)\qquad\text{$\meas$-a.e.~in $X$, for all $a\in\Algebra_*$}
\end{equation}
for $\leb^1$-a.e.~$t\in (0,T)$.

Set now $\Sigma(t,x)(a):=\bb_t(a)(x)$, $a\in\Algebra_*$.
Our goal is to show that $\sigma_{t,x}$ is a Dirac mass at $\Sigma(t,x)$ for $\leb^1\times\meas$-a.e.~$(t,x)$. To this aim, we first observe the following
characterization of Dirac masses in $\Algebra_*'$: $\nu\in\Prob(\Algebra_*')$ concentrated on $\{L:\ \|L\|=c\}$ for some $c\geq 0$
is a Dirac mass if and only if, for all $a\in\Algebra_*$,  $\nu$ is supported in one of the ``halfspaces'' $\{L:\ L(a)\geq 0\}$, $\{L:\ L(a)\leq 0\}$. 
Since $\Algebra_*$ is countable, this characterization follows by the implication
$$
L(a)L'(a)\geq 0\quad\forall a\in\Algebra_*\quad\Longrightarrow\quad L=\lambda L'\quad\text{for some $\lambda\geq 0$.}
$$
In turn, the implication above follows by this elementary argument: if $a\in\Algebra_*$ with $L'(a)\neq 0$ and $\lambda=L(a)/L'(a)$, for all
$a'\in\Algebra_*$ the functions $s\mapsto L(a+sa')$, $s\mapsto \lambda L'(a+sa')$ coincide at $s=0$ 
and have the same sign if and only they coincide, i.e.~$L(a')=\lambda L'(a')$.

Now, notice that \eqref{eq:lim-sup-theta-bis} gives
\begin{equation}\label{Young2}
\int \Theta(\|L\|)\di\sigma\leq\liminf_{n\to\infty}\int \Theta(\|L\|)\di\sigma_n\leq\int_X\Theta(|\bb|)\di\meas.
\end{equation}
On the other hand, using test functions $(t,x,L)\mapsto \chi(t)v(x) L(a)$ with $\chi\in C^\infty_c(0,T)$,
$v\in \Cbs(X)$ and $a\in P_{\setQ_+}\Algebra_\bs$ (notice that here we use the more than linear growth of 
$\Theta$) and the convergence of $\bb_{n,t}$ to $\bb_t$, passing to the limit as $n\to\infty$ in the identity
$$
\int \chi(t)v(x)L(a)\di\sigma_n(t,x,L)=\int_0^T\chi(t)\int_X \bb_{n,t}(a)(x)v(x)\di\meas_n(x)\,\di t
$$
and using the arbitrariness of $\chi$ and $v$ we obtain (with $\tilde\bb_t$ as in \eqref{eq:march1})
\begin{equation}\label{eq:Young3}
\tilde{\bb}_t(a)(x)=\bb_t(a)(x)\qquad\forall a\in P_{\setQ_+}\Algebra_\bs
\end{equation}
for $\leb^1\times\meas$-a.e.~$(t,x)\in (0,T)\times X$. From \eqref{eq:march11} it follows that
$$
|\bb_t(a)|(x)\leq \Lipa(a)\int_{\Algebra_*'} \|L\|\di\sigma_x(L)\qquad\forall a\in P_{\setQ_+}\Algebra_\bs
$$
for $\leb^1\times\meas$-a.e.~$(t,x)\in (0,T)\times X$, so that \autoref{lem:stimanorma1}~(ii) gives
$$
|\bb_t|(x)\leq \int_{\Algebra_*'} \|L\|\di\sigma_{x,t}(L)\qquad\text{for $\leb^1\times\meas$-a.e.~$(t,x)\in (0,T)\times X$.}
$$
Combining this information with \eqref{Young2} and the strict convexity of $\Theta$ we obtain that for $\leb^1\times\meas$-a.e.~$(t,x)$ one has
\begin{equation*}
\|L\|=|\bb_t|(x)\qquad\text{for $\sigma_{t,x}$-a.e.~$L\in\Algebra_*'$.}
\end{equation*}

Now, let $E$ be a closed set of $\Algebra_*'$ and define $\cc^i_t:\Algebra_*\to L^0(X,\meas)$ by
$$
\cc^1_t(a)(x):=\int_{E}L(a)\di\sigma_{t,x}(L),\qquad
\cc^2_t(a)(x):=\int_{\Algebra_*'\setminus E}L(a)\di\sigma_{t,x}(L),
$$
so that $\cc_t:=\cc_t^1+\cc_t^2$ coincides with $\bb_t$ on $\Algebra_*$. Notice also that $\cc^i_t$ satisfy the weak locality property 
(thanks to the locality of $\sigma_{t,x}$) and \eqref{eq:very_weak}
with $h^1_t(x)=\int_E\|L\|\di\sigma_{t,x}(X)$ and $h^2_t(x)=\int_{\Algebra_*'\setminus E}\|L\|\di\sigma_{t,x}(X)$. 
From \autoref{lem:stimanorma1}~(i,ii) we obtain that
$\cc^i_t$ induce derivations $\bb^i_t$ which coincide with $\cc^i_t$ on $P_{\setQ^+}\Algebra_\bs$ and satisfy
$$
|\bb^1_t|(x)\leq \int_{E}\|L\|\di\sigma_{t,x}(L),\,\,\,
|\bb^2_t|(x)\leq \int_{\Algebra_*'\setminus E}\|L\|\di\sigma_{t,x}(L)
\quad\text{$\leb^1\times\meas$-a.e.~in $(0,T)\times X$.}
$$
Since, thanks to \autoref{lem:stimanorma1}~(ii), $\bb_t$ is uniquely determined by its values on $P_{\setQ^+}\Algebra_*$, it follows 
that $\bb_t=\bb_t^1+\bb_t^2$ and that $|\bb^1_t|+|\bb^2_t|\leq |\bb_t|$ $\leb^1\times\meas$-a.e.~in $(0,T)\times X$. Therefore we can invoke the
strict convexity \autoref{lem:strict_convexity} to obtain  $\bb^1_{t,x}(a)\bb^2_{t,x}(a)\geq 0$ $\leb^1\times\meas$-a.e.~for all $a\in\Algebra_*$; by letting
$E$ vary in a countable family of closed sets generating the $\sigma$-algebra of $\Algebra_*'$, it
follows that $\leb^1\times\meas$-a.e.~measure $\sigma_{t,x}$ is concentrated on $\{L:\ \|L\|=|\bb_{t,x}|\}$ and
satisfies the above mentioned criterion for being a Dirac mass. Coming back to \eqref{eq:march1} and \eqref{eq:Young3} 
we obtain that $\sigma_{t,x}=\delta_{\Sigma(t,x)}$.

\noindent
{\it Step 3.} We proved that $(\Id\times\Sigma_n)_\#\leb^1\times\meas_n$ weakly converge
to $(\Id\times\Sigma)_\#\leb^1\times\meas$. Recalling the definitions of $\Sigma_n$ and $\Sigma$, if we consider the
continuous map $(t,x,L)\mapsto (t,x,L(a))$ for $a\in P_{\setQ^+}\Algebra_\bs$ fixed, from 
\autoref{prop:passa_al_limite} we obtain that strong convergence
of $\bb_{n,t}(a)$ to $\bb_t(a)$. In addition, if $\Theta(z)=|z|^p$ for some $p>1$, from the second part of 
\autoref{prop:passa_al_limite} we get the strong $L^p$ convergence of $\bb_{n,t}(a)$ to $\bb_t(a)$.
\end{proof}

\section{Convergence of gradient derivations under Mosco convergence}

In this section we use the typical notation of $\Gamma$-calculus, namely
$$
\Gamma(f,g):=\lim_{\epsilon\downarrow 0}\frac{|\rmD (f+\epsilon g)|^2-|\rmD f|^2}{2\epsilon}
\qquad\quad f,\,g\in H^{1,2}(X,\dist,\meas).
$$
Under the assumption that $\Ch$ is quadratic, this is a symmetric and $L^1(X,\meas)$-valued bilinear form,
with $\Gamma(f,f)=|\rmD f|^2$ $\meas$-a.e.~in $X$. This fact, proved first in \cite{AmbrosioGigliSavare14}, can now be seen as
a particular case of \autoref{tgigli}. We can canonically associate to any $f\in H^{1,2}(X,\dist,\meas)$ a 
\emph{gradient} derivation $\bb_f=\bb_{f,\meas}\in\Der^2(X,\dist,\meas)$, defined by
$$
\bb_f(g):=\Gamma(f,g)\qquad\quad g\in H^{1,2}(X,\dist,\meas)
$$
(we already consider, as for all derivations in $L^2$, the extended domain $H^{1,2}(X,\dist,\meas)$).

\begin{lemma}\label{lem:Hodge}
Assume that $\Ch$ is quadratic, let $f\in H^{1,2}(X,\dist,\meas)$, $\bb\in\Der^2(X,\dist,\meas)$ and assume that
$\int_X |\bb|^2\di\meas\leq\int_X|\rmD f|^2\di\meas$
and that $\int_X(\bb(g)-\bb_f(g))\di\meas=0$ for all $g\in H^{1,2}(X,\dist,\meas)$.
Then $\bb=\bb_f$.
\end{lemma}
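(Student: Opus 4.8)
The plan is to run a Hilbert-space orthogonality argument. Since $\Ch$ is quadratic, \autoref{rdual} identifies $\Der^2(X,\dist,\meas)$ isometrically with the $L^2(X,\meas)$-normed Hilbert module $T^*(X,\dist,\meas)$, so by \autoref{tgigli}(i) it carries a pointwise bilinear form $\langle\cc,\cc'\rangle_x$ whose integral is the inner product of the Hilbert space $(\Der^2,\|\cdot\|)$, with $\|\cc\|^2=\int_X|\cc|^2\di\meas$. I would show that $\bb-\bb_f$ is orthogonal to $\bb_f$ in this space and then combine orthogonality with the energy inequality through the Pythagorean identity, forcing $\bb-\bb_f$ to vanish.

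The key preliminary identity is the $L^2$ duality
$$\int_X\bb(g)\di\meas=\int_X\langle\bb,\bb_g\rangle_x\di\meas\eqqcolon\langle\bb,\bb_g\rangle_{L^2},\qquad g\in H^{1,2}(X,\dist,\meas),$$
where $\bb_g$ is the gradient derivation. To get it, I would use \autoref{rdual}: writing $L_\bb$ for the homomorphism on $T^*$ associated with $\bb$, one has $\bb(g)=L_\bb([X,g])$, while the Riesz representation of \autoref{tgigli}(ii) on the Hilbert module $T^*$ yields $m_\bb$ with $L_\bb(m)=\langle m_\bb,m\rangle_x$. A short polarization (using $\bb_{g+h}=\bb_g+\bb_h$ and $|[X,g]|=|\rmD g|$) shows $\langle[X,g],[X,h]\rangle_x=\Gamma(g,h)=\bb_g(h)$, so $[X,g]$ is precisely the Riesz representative of $\bb_g$; hence $\langle m_\bb,[X,g]\rangle_x=\langle\bb,\bb_g\rangle_x$ under the identification, which is the displayed formula.

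With this identity I only need the hypothesis for the single test function $g=f$. Noting that $|\bb_f|=|\rmD f|$ (so the assumption reads $\|\bb\|^2\le\|\bb_f\|^2$), taking $g=f$ in $\int_X(\bb(g)-\bb_f(g))\di\meas=0$ gives $\langle\bb,\bb_f\rangle_{L^2}=\langle\bb_f,\bb_f\rangle_{L^2}=\|\bb_f\|^2$, that is $\langle\bb-\bb_f,\bb_f\rangle_{L^2}=0$. Expanding the square then yields
$$\|\bb-\bb_f\|^2=\|\bb\|^2-2\langle\bb,\bb_f\rangle_{L^2}+\|\bb_f\|^2=\|\bb\|^2-\|\bb_f\|^2\le 0,$$
so $\|\bb-\bb_f\|=0$, i.e.\ $|\bb-\bb_f|=0$ $\meas$-a.e. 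Finally, from $|(\bb-\bb_f)(g)|\le|\bb-\bb_f||\rmD g|=0$ for every $g$ one concludes $\bb=\bb_f$.

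The routine parts are the polarization and the equality $|\bb_f|=|\rmD f|$. The only genuine point, and hence the main obstacle, is the careful bookkeeping of the identifications of $\Der^2$ with $T^*$ and with its dual underlying the $L^2$ duality identity, which is exactly \autoref{rdual} combined with \autoref{tgigli}. I would also stress the mild surprise that the full family of test functions $g\in H^{1,2}(X,\dist,\meas)$ is not needed: the single choice $g=f$ already suffices.
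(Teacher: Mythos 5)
Your proof is correct, but it follows a genuinely different route from the paper's. The paper argues pointwise: taking $g=f$ in the hypothesis and chaining $\int_X\bb(f)\di\meas=\int_X\Gamma(f,f)\di\meas\geq\int_X|\rmD f||\bb|\di\meas\geq\int_X\bb(f)\di\meas$ (the first inequality from H\"older plus $\int|\bb|^2\leq\int|\rmD f|^2$, the second from the definition of $|\bb|$), it forces the $\meas$-a.e.\ equalities $\bb(f)=|\rmD f|^2=|\bb|^2$, and then recovers $\bb(g)=\Gamma(f,g)$ for arbitrary $g$ by a first-variation argument, comparing the $O(\eps)$ terms in $|\bb(f+\eps g)|^2\leq|\bb|^2|\rmD(f+\eps g)|^2$. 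You instead work globally in the Hilbert module: the duality identity $\int_X\bb(g)\di\meas=\langle\bb,\bb_g\rangle_{L^2}$ (which is indeed what \autoref{rdual} together with \autoref{tgigli}(ii) gives, once one checks that $[X,g]$ is the Riesz representative of $\bb_g$), then orthogonality plus Pythagoras. Your use of the identification of $\Der^2(X,\dist,\meas)$ with the dual of $T^*(X,\dist,\meas)$ is legitimate here because quadraticity of $\Ch$ makes $H^{1,2}$ Hilbert, hence reflexive, and the paper itself relies on exactly this identification under the same hypothesis in \autoref{lem:strict_convexity}. What each approach buys: the paper's argument is more elementary (no module duality needed) and yields the stronger pointwise information $\bb(f)=|\rmD f|^2=|\bb|^2$ $\meas$-a.e.\ as a by-product; yours is conceptually shorter once the duality identity is established and makes transparent the observation --- equally true of the paper's proof, in fact --- that only the single test function $g=f$ is ever used.
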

\begin{proof} Since, by the definition of $|\bb|$, 
$\bb(f)\leq |\rmD f||\bb|$ and since H\"older's inequality gives
$$
\int_X \bb(f)\di\meas=\int_X\Gamma(f,f)\di\meas\geq\int_X|\rmD f||\bb|\di\meas
$$
we obtain that
\begin{equation}\label{eq:first_pointwise}
\bb(f)=|\rmD f|^2=|\bb|^2 \qquad\text{$\meas$-a.e.~in $X$.}
\end{equation}
Now, given any $g\in H^{1,2}(X,\dist,\meas)$, the pointwise equalities 
\eqref{eq:first_pointwise} in combination with the
pointwise inequalities
$$
|\bb(f)|^2+2\eps \bb(f)\bb(g)+o(\eps)=|\bb(f+\eps g)|^2\leq
|\bb|^2|\rmD (f+\eps g)|^2=
|\bb|^2|\rmD f|^2+2\eps|\bb|^2\Gamma(f,g)+o(\eps)
$$
give the result.\qedhere
\end{proof}

\begin{definition}[Mosco convergence]
We say that the Cheeger energies $\Ch_n=\Ch_{\meas_n}$ Mosco converge to $\Ch$ if both the 
following conditions hold:
\begin{itemize}
\item[(a)] (\emph{Weak-$\liminf$}). For every $f_n \in L^2(X,\meas_n)$ $L^2$-weakly converging to $f\in L^2(X,\meas)$, one has
\[ \Ch(f)\le \liminf_{n \to \infty} \Ch_n(f_n).\]
\item[(b)] (\emph{Strong-$\limsup$}). For every $f \in L^2(X,\meas)$ there exist $f_n\in L^2(X,\meas_n)$, $L^2$-strongly converging to $f$ with
\begin{equation*}
\Ch(f)=\lim_{n \to \infty} \Ch_n(f_n).
\end{equation*}
\end{itemize}
\end{definition}

Such a convergence holds for example if $(X,\dist,\meas_n)$ are $\RCD(K,\infty)$ spaces with $\meas_n(B_r(\bar x))\leq c_1e^{c_2r^2}$, 
see \cite[Theorem~6.8]{GigliMondinoSavare13}. 

In the sequel we also say that $f_n\in H^{1,2}(X,\dist,\meas_n)$ are weakly
convergent in $H^{1,2}$ to $f\in H^{1,2}(X,\dist,\meas)$ if $f_n\to f$ weakly in $L^2$ and $\sup_n\Ch_n(f_n)$ is finite. Strong convergence
in $H^{1,2}$ is defined by requiring strong $L^2$ convergence of $f_n$ to $f$, and $\Ch(f)=\lim_n\Ch_n(f_n)$. 
We are going to use this well-known consequence of Mosco convergence:
\begin{equation}\label{eq:Mosco_preliminary_bis}
\lim_{n\to\infty}\int_X\Gamma_n(f_n,g_n)\di\meas_n=\int_X\Gamma(f,g)\di\meas
\end{equation}
for any $f_n$ strongly convergent in $H^{1,2}$ to $f$ and all $g_n$ weakly convergent in $H^{1,2}$ to $g$.

Indeed, since $f_n + t g_n$ weakly converge in $H^{1,2}$ to $f+t g$ for all $t>0$, by Mosco convergence we have
\begin{equation*}
\begin{split}
\Ch(f) + 2t \int_X \Gamma(f, g) \di \meas &+ t^2 \Ch(g) = \Ch(f+ t g) \\
& \le \liminf_{n \to \infty}  \Ch_n(f_n+ t g_n) \\
& = \liminf_{n \to \infty}  \Ch_n(f_n) + 2t \int_X \Gamma_n(f_n, g_n) \di \meas_n + t^2 \Ch_n(g_n)\\
& \le  \Ch(f) +  2 t \liminf_{n\to\infty}\int_X \Gamma_n(f_n,g_n) \di \meas + t^2 \limsup_n \Ch_n(g_n).
\end{split}
\end{equation*}
Since $\sup_n \Ch_n(g_n)$ is finite, we may let $t \downarrow 0$ to deduce the $\liminf$ inequality in \eqref{eq:Mosco_preliminary_bis};
replacing $g$ by $-g$ gives \eqref{eq:Mosco_preliminary_bis}.

The following simple example shows that even when the functions $f_n$, $g_n$ are fixed, \eqref{eq:Mosco_preliminary_bis} might
not hold.

\begin{example}
Take $X=\setR^2$ endowed with the Euclidean distance, $f(x_1,x_2)=x_2$ and let 
$$
\meas_n=n\leb^2\res \bigl([0,1]\times [0,\tfrac 1n]\bigr),\qquad
\meas=\haus^1\res [0,1]\times\{0\}.
$$
Then, it is easily seen that $\Gamma_n(f,f)=1$, while $\Gamma(f,f)=0$.
\end{example}

The next theorem shows that any sequence $(f_n)$ strongly convergent in $H^{1,2}$ to $f$  
induces gradient derivations which are strongly converging to the gradient derivation of the limit function.

\begin{theorem}[Strong convergence of gradient derivations]\label{thm:strong-convergence-gradients}
Assume (a), (b)  of \autoref{sec:two_weak} on the limit structure $(X,\dist,\meas)$, 
that $\Ch_n$ are quadratic and that Mosco converge to $\Ch$.
Let $f_n \in H^{1,2}(X,\dist,\meas_n)$ be strongly convergent in $H^{1,2}$ to $f\in H^{1,2}(X,\dist,\meas)$. \\
Then the derivations $\bb_{f_n}$ strongly converge in $L^2$ to $\bb_f$. In addition, $\bb_{f_n}(g_n)$ weakly
converge in $L^2$ to $\bb_f(g)$ whenever $g_n\to g$ in $\Lipb(X)$ with $\cup_n\supp(g_n)$ bounded in $X$.
\end{theorem}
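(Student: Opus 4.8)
The plan is to identify the weak subsequential limit of $\bb_{f_n}$ through the Hodge-type criterion \autoref{lem:Hodge}, and then to promote weak convergence to strong convergence by means of \autoref{thm:strong-stability-hilbert}. I first extract the relevant bounds from strong $H^{1,2}$ convergence. Since $|\bb_{f_n}| = |\rmD_n f_n|$ $\meas_n$-a.e.\ (as is standard for gradient derivations), one has $\int_X|\bb_{f_n}|^2\di\meas_n = 2\Ch_n(f_n)$, and $f_n\to f$ strongly in $H^{1,2}$ gives $\Ch_n(f_n)\to\Ch(f)$; hence
\[
\lim_{n\to\infty}\int_X|\bb_{f_n}|^2\di\meas_n = 2\Ch(f) = \int_X|\bb_f|^2\di\meas .
\]
In particular the hypotheses of \autoref{thm:compa2} hold with $\Theta(z)=z^2$, so along a subsequence $\bb_{f_n}\Weakto{P\Algebra_\bs}\bb$ for some $\bb\in\Der^2(X,\dist,\meas)$, with $\int_X|\bb|^2\di\meas\le\int_X|\rmD f|^2\di\meas$ by the lower semicontinuity part of that theorem; this is exactly hypothesis (i) of \autoref{lem:Hodge}. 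Arguing as in \autoref{thm:compactness}, the same subsequence satisfies $\bb_{f_n}(g)\meas_n\weakto\bb(g)\meas$ for every $g\in\Lip_\bs(X)$, since the weak limits define a derivation agreeing with $\bb$ on $P_{\setQ_+}\Algebra_\bs$, hence equal to $\bb$ by \autoref{lem:stimanorma2}.

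To check hypothesis (ii), namely $\int_X(\bb(g)-\bb_f(g))\di\meas=0$ for all $g\in H^{1,2}(X,\dist,\meas)$, I note that both sides are continuous in $g\in H^{1,2}$ and that $\Algebra_\bs$ is dense in $H^{1,2}(X,\dist,\meas)$ (via \autoref{tapprox_refined}), so it is enough to treat $g\in\Algebra_\bs$, which has bounded support. For such $g$ the constant sequence is weakly convergent in $H^{1,2}$, whence \eqref{eq:Mosco_preliminary_bis} yields $\int_X\Gamma_n(f_n,g)\di\meas_n\to\int_X\Gamma(f,g)\di\meas = \int_X\bb_f(g)\di\meas$. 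On the other hand $\bb_{f_n}(g)=\Gamma_n(f_n,g)$ vanishes off $\supp g$, so picking $v\in\Cbs(X)$ with $v\equiv 1$ on $\supp g$ and using $\bb_{f_n}(g)\meas_n\weakto\bb(g)\meas$ gives $\int_X\bb_{f_n}(g)\di\meas_n = \int_X\bb_{f_n}(g)v\di\meas_n\to\int_X\bb(g)v\di\meas = \int_X\bb(g)\di\meas$. Comparing the two limits proves (ii). Then \autoref{lem:Hodge} gives $\bb=\bb_f$; as this limit is independent of the subsequence, the whole sequence converges, $\bb_{f_n}\Weakto{P\Algebra_\bs}\bb_f$, and moreover $\bb_{f_n}(g)\meas_n\weakto\bb_f(g)\meas$ for every $g\in\Lip_\bs(X)$.

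Strong convergence now follows from \autoref{thm:strong-stability-hilbert} in the autonomous case with $\Theta(z)=z^2$: the weak convergence just obtained is the required duality hypothesis, while the displayed norm convergence is the no-loss-of-norm condition \eqref{eq:lim-sup-theta-bis}; the conclusion is convergence in measure of $\bb_{f_n}(a)$ to $\bb_f(a)$ for $a\in P_{\setQ_+}\Algebra_\bs$ and, because the $L^2$ norms match, strong $L^2$ convergence. For the final assertion, the bound $|\bb_{f_n}(g_n)| = |\Gamma_n(f_n,g_n)|\le|\rmD_n f_n|\sup_m\Lip(g_m)$ and the equiboundedness of the supports give weak-$L^2$ compactness, so it suffices to identify the limit. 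Writing $\bb_{f_n}(g_n)=\Gamma_n(f_n,g)+\Gamma_n(f_n,g_n-g)$, the first term converges weakly in $L^2$ to $\Gamma(f,g)=\bb_f(g)$ by the previous paragraph. For the second, testing against $v\in\Lip_\bs(X)$ and using the Leibniz rule,
\[
\int_X\Gamma_n(f_n,g_n-g)v\di\meas_n = \int_X\Gamma_n\bigl(f_n,(g_n-g)v\bigr)\di\meas_n - \int_X(g_n-g)\,\Gamma_n(f_n,v)\di\meas_n ,
\]
the first integral tends to $0$ by \eqref{eq:Mosco_preliminary_bis} (since $(g_n-g)v\to 0$ weakly in $H^{1,2}$) and the second tends to $0$ because $\norm{g_n-g}_\infty\to 0$ while $\int_{\supp v}|\Gamma_n(f_n,v)|\di\meas_n$ stays bounded; a density argument extends this to $v\in\Cbs(X)$, so $\Gamma_n(f_n,g_n-g)\weakto 0$ weakly in $L^2$ and the identification is complete.

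The main obstacle is the identification $\bb=\bb_f$. Mosco convergence only delivers the globally integrated quantity $\int_X\Gamma_n(f_n,g_n)\di\meas_n$, so turning this scalar statement into an identity of derivations forces the use of both halves of \autoref{lem:Hodge}: the norm inequality coming from weak lower semicontinuity, and the weak Euler--Lagrange identity (ii). The delicate point in (ii) is the passage from the global integral to the weak limit measure, where mass could a priori escape to infinity; restricting the test functions to the bounded-support class $\Algebra_\bs$, where the cut-off $v\equiv1$ on $\supp g$ suppresses the tail exactly, is what makes the comparison legitimate. Once the limit is identified, the upgrade to strong convergence is a direct application of \autoref{thm:strong-stability-hilbert}.
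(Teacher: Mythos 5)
Your overall architecture coincides with the paper's: extract a weak subsequential limit via \autoref{thm:compa2}, identify it with $\bb_f$ through \autoref{lem:Hodge}, upgrade to strong convergence with \autoref{thm:strong-stability-hilbert}, and treat the last assertion with the Leibniz rule. The norm computation, hypothesis (i) of \autoref{lem:Hodge}, and the final upgrade are all fine. The genuine gap is the claim that, ``arguing as in \autoref{thm:compactness}'', the subsequence satisfies $\bb_{f_n}(g)\meas_n\weakto\bb(g)\meas$ for every $g\in\Lipbs(X)$. The argument of \autoref{thm:compactness} produces from the weak limits on $\Algebra_\bs$ only a \emph{pre-derivation} (a $\setQ$-linear map bounded by $h\Lipa(\plchldr)$); promoting it to a derivation there goes through \autoref{lem:extension-pre-derivation}, which requires equi-integrable divergence bounds, and here $\div\bb_{f_n}=\Delta_nf_n$ is completely uncontrolled. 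Nor can you invoke \autoref{lem:stimanorma2}: that lemma determines a \emph{derivation} (bounded by $h|\rmD\plchldr|$, hence amenable to $H^{1,2}$-approximation via $P_sg\to g$) from its values on $P_{\setQ_+}\Algebra_\bs$, but a pre-derivation is not so determined, since $\Lipa(g-P_sg)$ does not tend to $0$. Concretely, to pass from $\bb_{f_n}(P_sg)\meas_n\weakto\bb(P_sg)\meas$ to the same statement for $g\in\Algebra_\bs$ you would need $\||{\rmD}_n(g-P_sg)|\|_{L^2(X,\meas_n)}\to0$ as $s\downarrow0$ \emph{uniformly in $n$}, and nothing in the hypotheses provides this. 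Since both your verification of hypothesis (ii) of \autoref{lem:Hodge} (the cut-off $v\equiv1$ on $\supp g$ for $g\in\Algebra_\bs$) and your treatment of the term $\Gamma_n(f_n,g)$ in the last paragraph rest on this claim, the gap propagates to both conclusions of the theorem.

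The fix, which is the paper's route, is to verify hypothesis (ii) without ever testing the weak limit outside $P_{\setQ_+}\Algebra_\bs$: for $h\in P_{\setQ_+}\Algebra_\bs$ and cut-offs $\chi_R\in\Lipbs(X)$ with $\chi_R\equiv1$ on $B_R(\bar x)$, the Leibniz rule gives
\[
\int_X \bb_{f_n}(h)\chi_R\di\meas_n=\int_X\Gamma_n(f_n,h\chi_R)\di\meas_n-\int_Xh\,\Gamma_n(f_n,\chi_R)\di\meas_n .
\]
The left-hand side converges by the $P\Algebra_\bs$-duality (now $\chi_R$ plays the role of the test function $v\in\Cbs(X)$), the first term on the right converges by \eqref{eq:Mosco_preliminary_bis} applied to the \emph{fixed} function $h\chi_R\in\Lipbs(X)$, and the second is $o(1)$ as $R\to\infty$ by Cauchy--Schwarz, since $\Gamma_n(f_n,\chi_R)$ vanishes on $B_R(\bar x)$ by locality and $h\in L^2(X,\meas)$. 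Letting $n\to\infty$ and then $R\to\infty$ yields $\int_X\bb(h)\di\meas=\int_X\bb_f(h)\di\meas$ on $P_{\setQ_+}\Algebra_\bs$, which extends to $H^{1,2}(X,\dist,\meas)$ by density, and \autoref{lem:Hodge} applies. The final assertion should likewise be tested against $a\in P_{\setQ_+}\Algebra_\bs$, writing $\int_X\bb_{f_n}(g_n)a\di\meas_n=\int_X\Gamma_n(f_n,g_na)\di\meas_n-\int_X\bb_{f_n}(a)g_n\di\meas_n$ and using \eqref{eq:Mosco_preliminary_bis} together with the already established strong $L^2$ convergence of $\bb_{f_n}(a)$, rather than decomposing through $\Gamma_n(f_n,g)$.
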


\begin{proof} Set $\bb_n=\bb_{f_n}$, $\bb=\bb_f$, and let $\Gamma_n$ be the bilinear form associated to $\Ch_n$.
From \eqref{eq:Mosco_preliminary_bis} we get
\begin{equation}\label{eq:Mosco_preliminary}
\lim_{n\to\infty}\int_X\bb_n(g)\di\meas_n=\int_X\bb(g)\di\meas
\qquad\forall g\in\Lip_\bs(X).
\end{equation}
By \autoref{thm:compa2} with $\Theta(z)= z^2$ there exist a subsequence $b_{n(k)}$ and $\cc\in \Der^2(X,\dist,\meas)$ such that
$\bb_{n(k)}\Weakto{P\Algebra_\bs}\cc$ and 
$$\int_X |\cc|^2 \di \meas \le \liminf_{k \to \infty} \int_X |\bb_{n(k)}|^2 \di \meas_{n(k)}\leq\int_X|\bb|^2\di\meas.$$ 

We fix a sequence of cut-off functions $\chi_R\in\Lip_\bs(X)$ with $0\leq \chi_R\leq 1$, $\Lip(\chi_R)\leq 2$ and
$\chi_R\equiv 1$ on $B_R(\bar x)$.
To show that $\bb = \cc$, let $h\in P_{\setQ_+}\Algebra_\bs$ and
let us notice that the Young inequality and the $L^2(X,\meas)$ integrability of $h$ easily give
\begin{equation}\label{eq:Mosco_preliminary_ter}
\limsup_{R\to\infty}\limsup_{n\to\infty}\int_Xh|\Gamma_n(f_n,\chi_R)|\di\meas_n=
\limsup_{R\to\infty}\limsup_{n\to\infty}\int_{X\setminus\bar{B}_R(\bar x)}h|\Gamma_n(f_n,\chi_R)|\di\meas_n=0.
\end{equation}
Now, taking \eqref{eq:Mosco_preliminary} with $g=h\chi_R$ and \eqref{eq:Mosco_preliminary_ter} into account, we can pass to the limit as $n\to\infty$
in the identity
$$
\int_X \bb_n(h)\chi_R\di\meas_n=\int_X\Gamma_n(f_n,h\chi_R)\di\meas_n-\int_Xh\Gamma_n(f_n,\chi_R)\di\meas_n
$$
to get $\int_X \cc(h)\chi_R\di \meas = \int_X \bb( h\chi_R) \di \meas+\omega_R$, with
$\omega_R\to 0$ as $R\to\infty$. Taking the limit w.r.t.~$R$, we obtain
$\int_X \bb(h)\di \meas = \int_X \cc(h) \di \meas$ for every $h\in P_{\setQ_+}\Algebra_\bs$.
This identity extends, by the density of $P_{\setQ_+}\Algebra_\bs$, to any $h\in H^{1,2}(X,\dist, \meas)$; then, 
\autoref{lem:Hodge} applies and gives $\bb= \cc$.

By sequential compactness, this proves that $\bb_n\Weakto{P\Algebra_\bs}\bb$.
Strong convergence finally follows from \autoref{thm:strong-stability-hilbert}, with $\Theta(z) = z^2$.

In order to prove the final part of the statement it is sufficient to apply the Leibniz formula, passing to the
limit as $n\to\infty$ in
$$
\int_X\bb_n(g_n)a\di\meas_n=\int_X\Gamma_n(f_n,g_na)\di\meas_n-\int_X\bb_n(a)g_n\di\meas_n
\qquad a\in P_{\setQ_+}\Algebra_\bs
$$
and using \eqref{eq:Mosco_preliminary_bis} and the $L^2$ convergence of $\bb_n(a)$ to $\bb(a)$. This
proves that $\bb_{f_n}(g_n)$ weakly converge to $\bb_f(g)$ in the duality with $P_{\setQ^+}\Algebra_\bs$, and since
this class of test functions uniquely determines the limit, the thesis follows.
\end{proof}

{\color{blue}
\begin{remark} In the setting of the analysis of Ricci limit spaces, S.~Honda deeply studied in \cite{Honda1}
notions of convergence for gradient derivations $\bb_{f_n}$ associated to $f_n$, by looking essentially at the weak convergence of 
of $\bb_{f_n}(g_n)$ to $\bb_f(g)$ when $g_n=d(\cdot,z_n)$, $g=d(\cdot,z)$, with $z_n\to z$ (see
in particular Definition 4.18 in \cite{Honda1}, and \cite{Honda2} for more general tensor fields). In this respect, the final part of the
statement of Theorem~\ref{thm:strong-convergence-gradients} shows the connection between ours and Honda's convergence, under
the assumption of strong convergence in $H^{1,2}$ of the $f_n$ to $f$. 
\end{remark}}

We conclude this section by providing two examples.

\begin{example}[Convergence of resolvents]\label{ex:resolvent} Under the assumptions of the previous theorem,
let $f_n \in L^2(X,\meas_n)$ be strongly convergent in $L^2$ to $f \in L^2(X,\meas)$, let $\lambda>0$ and set $u_n = (\lambda -\Delta_n)^{-1} f_n \in 
D(\Delta_n)$, so that $\Delta_n u_n = \lambda u_n -f_n$, and consider the gradient derivations $\bb_{u_n}$.

It is then known (see e.g., \cite[Corollary~6.10]{GigliMondinoSavare13}) that Mosco convergence entails $L^2$-strong convergence of $u_n$ to $u =  (\lambda -\Delta)^{-1} f \in  D(\Delta)$, as well as $\lim_n\Ch_n(u_n) = \Ch(u)$. We may choose $\Theta(z) =z^2$, to fulfil the assumptions of \autoref{thm:strong-convergence-gradients} and we deduce strong convergence of $\bb_{u_n}$ to 
$\bb_u$.
\end{example}

\begin{example}[Laplacian eigenvalues]\label{ex:eigenfunctions} Let us assume that $(X,\dist,\meas_n)$ are $\RCD(K,\infty)$ spaces, so that in
particular the assumptions of the previous theorem hold. In this case, let us consider normalized eigenfunctions of (minus) the Laplacian operators, 
i.e.~$u_n \in D(\Delta_n)$ which satisfy $\int_X u_n^2 \di \meas_n = 1$, and $-\Delta_n u_n =\lambda_n u_n$, for some $\lambda_n\in\setR^+$.
Assuming in addition that either $K>0$ or all $\meas_n$ are probability measures,
using min-max arguments and Mosco convergence of the Cheeger energies, it has been proved in \cite[Theorem~7.8]{GigliMondinoSavare13} that, representing 
the discrete spectra of $-\Delta_n$ as $(\lambda^k_n)_{k \ge 0}$ (in non-decreasing order), for each $k \ge 0$ the eigenvalues $\lambda^k_n$ converge 
as $n\to\infty$ to the $k$-th eigenvalue of $\lambda^k$ of $-\Delta$, and that the associated eigenfunctions $u_n^k$ $L^2$-strongly converge to 
a corresponding eigenfunction $u^k$, possibly extracting a subsequence (there is no need to extract subsequences if the limit eigenvalue is simple).

Since $\Ch_n(u_n^k) = \lambda_n^k \to  \lambda^k$, 
by \autoref{thm:strong-convergence-gradients} we deduce the strong convergence of the gradient derivations $\bb_{u_n^k}$.
\end{example}

\part{Flows associated to derivations and their convergence}\label{pt:flows}

\section{Continuity equations and flows associated to derivations}\label{sec:contiflows}

Given $(\bb_t)_{t\in (0,T)}\subset\Der^1_\loc(X,\dist,\meas)$, we now consider the continuity equation
\begin{equation}\label{eq:ce}\tag{CE}
\left\{\begin{aligned}
&\didi{t}\mu_t + \div(\bb_t\mu_t) = 0, \\
&\mu_0 = \bar\mu
\end{aligned}\right.
\end{equation}
and its weak formulation in the space of probability measures $\mu_t=u_t\meas$ absolutely continuous w.r.t.~$\meas$.

\begin{definition}[Weak solutions to the continuity equation]\label{def:ce}
Let $\mu_t=u_t\meas\in\Prob(X)$, $t\in (0,T)$, and $\bar\mu\in\Prob(X)$. We say that 
$\mu_t$ is a solution to \eqref{eq:ce} if
\begin{equation}\label{eq:ce-integrability}
\int_0^T \int_X |\bb_t(f)| \di\mu_t \di t < \infty\qquad\forall f\in\Lip_\bs(X)
\end{equation}
and for every $f\in\Lip_\bs(X)$ and $\chi\in C^1_c([0,T))$ one has
\begin{equation}\label{eq:ce1}
-\int_0^T\chi'(t)\int_X f \di\mu_t\di t = \int_0^T\chi(t)\int_X \bb_t(f) \di\mu_t \di t+\chi(0)\int_Xf\di\bar\mu.
\end{equation}
\end{definition}

\begin{remark}[Different classes of integrability and test functions] {\rm Under the stronger assumption $\mu_t\leq C\meas$, the weak formulation
of \eqref{eq:ce} makes sense for all test functions $f\in\Lipb(X)$ assuming the condition 
\begin{equation}\label{eq:ce-integrability-bis}
\int_0^T\|\bb_t(f)\|_{L^1+L^\infty(X,\meas)}\di t<\infty\qquad\forall f\in\Lipb(X),
\end{equation}
somehow weaker than \eqref{eq:ce-integrability} (a weaker norm, but a larger class of functions). If
one has 
\begin{equation}\label{eq:ce-integrability-ter}
\int_0^T\int_X\Theta(|\bb_t|)\di\meas\di t<\infty
\end{equation}
for some $\Theta:[0,\infty)\to [0,\infty]$ with more than linear growth at infinity, then the inequality
$z\leq c(1+\Theta(z))$ immediately yields that \eqref{eq:ce-integrability-bis} holds. 
Furthermore, if we assume \eqref{eq:ce-integrability-ter} and $\mu_t\leq C\meas$, it is easy
by a truncation argument to pass in the weak formulation of \eqref{eq:ce} from $\Lip_\bs(X)$ to $\Lipb(X)$.
}\end{remark}

We say that a solution $\mu_t$ to \eqref{eq:ce} is weakly continuous if $t\mapsto \int_X f\di\mu_t$ is 
continuous in $(0,T)$ for all $f\in\Lip_\bs(X)$; under this assumption, because of \eqref{eq:ce-integrability}, 
the map is absolutely continuous and \eqref{eq:ce1} can be written in the equivalent form
$$
\didi{t}\int_X f\di\mu_t=\int_X\bb_t(f)\di\mu_t
\quad\text{for $\leb^1$-a.e.~$t\in (0,T)$},\qquad
\lim_{t\downarrow 0}\int_X f\di\mu_t=\int_X f\di\bar\mu.
$$
A natural class of examples of weakly continuous solutions to \eqref{eq:ce} is given by $\mu_t=(\ev_t)_\#\ppi$
with $\ppi\in \Prob(C([0,T];X))$, see \autoref{rem:pi_induce_conti} below for more details.

\begin{definition}[Regular flow relative to $\bb$]\label{dflow}
Let $\XX:[0,T]\times X\to X$ be a Borel map. We say that $\XX$ is a regular flow relative to $\bb_t$ if:
\begin{itemize}
\item[(a)] for some constant $C=C(\XX,\meas)$, one has $\XX(t,\plchldr)_\#\meas\leq C\meas$ for all $t\in [0,T]$;
\item[(b)] $\XX(\plchldr,x)\in AC([0,T];X)$ and $\XX(0,x)=x$ for $\meas$-a.e.~$x\in X$ and, for all $f\in\Lip_\bs(X)$, one has 
$$
\didi{t} f\circ\XX(t,x)=\bb_t(f)(\XX(t,x))\qquad\text{for $\leb^1\times\meas$-a.e.~$(t,x)\in (0,T)\times X$.}
$$
\end{itemize}
\end{definition}

The property of being a regular flow is independent of the choice of representatives $\bb_t(f)$; to see this,
notice that the condition $\XX(t,\plchldr)_\#\meas\ll\meas$ for $\leb^1$-a.e.~$t\in (0,T)$, weaker than (a), and Fubini's theorem give that the set
$$
\left\{(t,x)\in (0,T)\times X:\ (t,\XX(t,x))\in N\right\}
$$ 
is $\leb^1\times\meas$-negligible
for any $\leb^1\times\meas$-negligible set $N\subset (0,T)\times X$.

In connection with the stability analysis, the previous definition needs to be extended in order to cover
generalized flows, namely flows where  branching behaviour is allowed. 

\begin{definition}[Regular generalized flow relative to $\bb$]\label{dregflow}
Let $\ppi\in\Prob(C([0,T];X))$. We say that $\ppi$ is a regular flow relative to $\bb_t$ if:
\begin{itemize}
\item[(a)] for some constant $C=C(\ppi,\meas)$, one has $(\ev_t)_\#\ppi\leq C\meas$ for all $t\in [0,T]$;
\item[(b)] $\ppi$ is concentrated on $AC([0,T];X)$ and for all $f\in\Lip_\bs(X)$ one has
\begin{equation}\label{eq:cond_b}
\didi{t} f\circ\gamma(t)=\bb_t(f)(\gamma(t))\,\,\,\text{for $\leb^1\times\ppi$-a.e.~$(t,\gamma)\in (0,T)\times C([0,T];X)$.}
\end{equation}
\end{itemize}
\end{definition}

As for \autoref{dflow}, it is the regularity condition (a), even in the weakened 
form $(\ev_t)_\#\ppi\ll\meas$ for $\leb^1$-a.e.~ $t\in (0,T)$,
that ensures that property (b) above is independent of the
choice of representatives of $\bb_t(f)$. Obviously any regular flow $\XX$ induces regular generalized flows $\ppi$, given by $\Sigma_\# (f\meas)$, where
$f$ is the density of a probability measure and $\Sigma:X\to C([0,T];X)$ is given by $\Sigma(x)=\XX(\plchldr,x)$. The converse holds if $\meas\ll (\ev_0)_\#\ppi$ and
if the conditional probability measures $\ppi_x$ in $C([0,T];X)$ induced by $\ev_0$ are Dirac masses $\meas$-a.e.~in $X$; indeed, if this is
the case, setting $\ppi_x=\delta_{\{\XX(\plchldr,x)\}}$, we recover $\XX$. 

\begin{remark}\label{rem:pi_induce_conti}{\rm
Under the mild integrability assumption \eqref{eq:ce-integrability} on $\bb_t$,
property \eqref{eq:cond_b} in \autoref{dregflow} and the continuity equation are closely related.
More precisely, if $\ppi\in\Prob(C([0,T];X))$ satisfies $(\ev_t)_\#\ppi\ll\meas$ for $\leb^1$-a.e.~$t\in (0,T)$ (weaker than (a)) and
\eqref{eq:cond_b}, then its marginals $\mu_t=(\ev_t)_\#\ppi$, $t\in [0,T]$, are weakly continuous in time and satisfy \eqref{eq:ce}.
Indeed, fix $f\in\Lip_\bs(X)$; it is clear that $t\mapsto\int_X f\di\mu_t$ is continuous. Moreover, by the Fubini-Tonelli theorem, for $\leb^1$-a.e.~$t\in (0,T)$ one has
$$
\didi{t} f\circ\gamma(t)=\bb_t(f)(\gamma(t))\qquad\text{for $\ppi$-a.e.~$\gamma\in C([0,T];X)$}
$$
and, by integration, one obtains
$$
\didi{t} \int_X f\di\mu_t=\didi{t}\int f(\gamma(t))\di\ppi(\gamma)=\int\bb_t(f)(\gamma(t))\di\ppi(\gamma)=
\int_X \bb_t(f)\di\mu_t.
$$
}\end{remark}

The following elementary criterion will be useful and provides a converse to \autoref{rem:pi_induce_conti}; roughly speaking, it links the validity of
the continuity equations for arbitrary modifications $g\ppi$,  $g \in \Cb(C([0,T]; X))$, to the property of being concentrated on solutions to the ODE,
in the weak sense expressed by condition (b).

\begin{proposition}[Concentration criterion]\label{prop:conc}
Let $\ppi\in\Prob(C([0,T];X))$ be concentrated on $AC([0,T];X)$ with $(\ev_t)_\#\ppi\ll\meas$ for $\leb^1$-a.e.~$t\in (0,T)$ and assume
that $\bb_t$ satisfy \eqref{eq:ce-integrability}.
Then, the following properties are equivalent:
\begin{itemize}
\item[(a)] for all $f \in \Lip_\bs(X)$, \eqref{eq:cond_b} holds;
\item[(b)] for all $g \in \Cb(C([0,T]; X))$, with $g \ppi \in\Prob(C([0,T];X))$,  the curve
$$
\mu^g_t:=(\ev_t)_\# (g\ppi), \quad \text{$t \in [0,T]$,}
$$
solves the continuity equation \eqref{eq:ce} with $\bar\mu=\mu^g_0$.
\end{itemize}
\end{proposition}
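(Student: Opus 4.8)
The plan is to establish the two implications separately. The implication (a)$\Rightarrow$(b) is essentially \autoref{rem:pi_induce_conti} applied to the modified measure $g\ppi$. Fix an admissible $g$, i.e.\ $g\in\Cb(C([0,T];X))$ with $g\ge0$ and $\int g\di\ppi=1$, so that $g\ppi\in\Prob(C([0,T];X))$. Since $g\ppi\ll\ppi$, all the structural hypotheses transfer from $\ppi$ to $g\ppi$: it is concentrated on $AC([0,T];X)$; one has $(\ev_t)_\#(g\ppi)\le\|g\|_\infty\,(\ev_t)_\#\ppi\ll\meas$ for $\leb^1$-a.e.\ $t$; the integrability \eqref{eq:ce-integrability} for $\mu^g_t:=(\ev_t)_\#(g\ppi)$ follows from the same bound $\mu^g_t\le\|g\|_\infty\mu_t$; and \eqref{eq:cond_b} continues to hold $\leb^1\times(g\ppi)$-a.e.\ because it holds $\leb^1\times\ppi$-a.e. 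Hence \autoref{rem:pi_induce_conti}, read verbatim with $g\ppi$ in place of $\ppi$, shows that $\mu^g_t$ is weakly continuous in time and solves \eqref{eq:ce} with $\bar\mu=\mu^g_0$.

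For the converse (b)$\Rightarrow$(a), fix $f\in\Lip_\bs(X)$ and set $w(t,\gamma):=\bb_t(f)(\gamma(t))$. By \eqref{eq:ce-integrability} applied to $\mu_t=(\ev_t)_\#\ppi$ one has $w\in L^1(\leb^1\times\ppi)$, so by Fubini $w(\cdot,\gamma)\in L^1(0,T)$ for $\ppi$-a.e.\ $\gamma$ and the defect
\[
u(t,\gamma):=f(\gamma(t))-f(\gamma(0))-\int_0^t w(s,\gamma)\di s
\]
is well defined, lies in $L^1(\ppi)$ for each fixed $t$, and is absolutely continuous in $t$ for $\ppi$-a.e.\ $\gamma$, being the difference of the AC map $t\mapsto f(\gamma(t))$ and an indefinite integral. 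The goal is to show $u\equiv0$ $\leb^1\times\ppi$-a.e.: indeed, then for $\ppi$-a.e.\ $\gamma$ the identity $f(\gamma(t))=f(\gamma(0))+\int_0^t w(s,\gamma)\di s$ holds for all $t$ by continuity, and differentiating in $t$ yields exactly \eqref{eq:cond_b}. Note that proceeding through this integrated identity, rather than integrating \eqref{eq:ce1} by parts against $\frac{\mathrm{d}}{\di t}f(\gamma(t))$, is what lets us avoid any control on $\int|\dot\gamma|\di\ppi$, which is not available under the hypotheses.

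The key computation is to integrate $u(t,\cdot)$ against $g\ppi$ for an admissible $g$. Using Fubini (legitimate since $w\in L^1(\leb^1\times\ppi)$ and $g$ is bounded) one rewrites
\[
\int u(t,\gamma)\,g\di\ppi=\int_X f\di\mu^g_t-\int_X f\di\mu^g_0-\int_0^t\int_X\bb_s(f)\di\mu^g_s\,\di s .
\]
Now $\mu^g_t$ is weakly continuous, as $t\mapsto\int_X f\di\mu^g_t=\int f(\gamma(t))g\di\ppi$ is continuous by dominated convergence; hence (b) and the integrated form of \eqref{eq:ce1} make the right-hand side vanish for every $t$. Thus $\int u(t,\cdot)g\di\ppi=0$ for all $t$ and all admissible $g$; since $g\equiv1$ is admissible, scaling and adding constants extend this to all $g\in\Cb(C([0,T];X))$. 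The main obstacle is the final step of removing the dependence of the exceptional set on $g$: for each fixed $t$ we have $u(t,\cdot)\in L^1(\ppi)$, and since bounded continuous functions are dense in $L^1(\ppi)$ (equivalently, measure-determining) on the separable metric space $C([0,T];X)$, we conclude $u(t,\cdot)=0$ $\ppi$-a.e.\ for every $t$. A last application of Fubini gives $u=0$ $\leb^1\times\ppi$-a.e., completing the proof.
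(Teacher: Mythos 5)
Your proof is correct and takes essentially the same route as the paper: the forward implication is the Remark on marginals applied to $g\ppi$, and the converse tests the integrated continuity equation for $\mu^g_t$ against all admissible $g$, uses that $\Cb(C([0,T];X))$ is measure-determining to get the pointwise-in-time identity $\ppi$-a.e., and then upgrades to all times by absolute continuity of $t\mapsto f(\gamma(t))$. The only cosmetic difference is that you fix the left endpoint $s=0$ and use Fubini plus continuity where the paper lets $s,t$ range over rationals and invokes density.
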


\begin{proof} We already proved the implication from  \eqref{eq:cond_b}  to the continuity equation. To show the converse, 
fix $g$ as in (b). For all $g\in\Lip_\bs(X)$, by integration in an interval $[s,t]\subset [0,T]$ we get
$$
\int_X f\di\mu^g_t-\int_X f\di\mu^g_s=\int_s^t\int_X\bb_r(f)\di\mu^g_r\di r, 
$$
so that
$$
\int f(\gamma(t))-f(\gamma(s)) \di g\ppi (\gamma)=\int \int_s^t\bb_r(f)(\gamma(r))\di r \di g \ppi(\gamma),
$$
i.e., $\int \left[ f(\gamma(t))-f(\gamma(s)) -\int_s^t\bb_r(f)(\gamma(r))\di r \right] g(\gamma) \di \ppi (\gamma)=0$.
Since $g$ varies in a sufficiently large class,  we obtain that $f(\gamma(t))-f(\gamma(s))=\int_s^t\bb_r(f)(\gamma(r))\di r$
for $\ppi$-a.e.~$\gamma$, for $s$ and $t$ fixed. Since $f\circ\gamma\in AC([0,T])$ for $\ppi$-a.e.~$\gamma$, by 
letting $s$ and $t$ vary in $\setQ\cap [0,T]$ we obtain, via a density argument
$$
f(\gamma(t))-f(\gamma(s))=\int_s^t\bb_r(f)(\gamma(r))\di r\quad\text{for all $s,\,t\in [0,T]$ with $s\leq t$, for $\ppi$-a.e.~$\gamma$,}
$$
which yields \eqref{eq:cond_b}.
\end{proof}

\begin{remark}[Metric speed of a generalized flow]\label{rem:metric-speed}
Let us recall that $|\bb_t|$ provides an upper bound for the metric speed of curves selected by a generalized flow: indeed, 
\cite[Lemma 7.4]{AmbrosioTrevisan14} gives the inequality $\abs{\dot \gamma} (t) \le \abs{\bb_t}(\gamma(t))$ $\leb^1$-a.e.~in $(0,T)$, 
for $\ppi$-a.e.~$\gamma$. 
\end{remark}

\begin{proposition}[Tightness for generalized flows]\label{prop:tightness}
Assume that $\ppi_n\in\Prob(C([0,T];X))$ are regular generalized flows relative to $\bb_{n,t}$ such that
\begin{equation}\label{eq:no_dispersion}
\lim_{R\to\infty}\sup_n\,(\ev_0)_\#\ppi_n\bigl(X\setminus B_R(\bar x)\bigr)=0
\end{equation} 
for some $\bar x\in X$,
\begin{equation*}
 \sup_n C(\ppi_n,\meas_n)<\infty \quad \text{ and} \quad
\sup_n \int_0^T\int_X\Theta(|\bb_{n,t}|)\di\meas_n \di t < \infty,
\end{equation*}
with $\Theta:[0,\infty)\to [0,\infty]$ having more than linear growth at infinity. Then
the family $\{\ppi_n\}$ is tight in $\Prob(C([0,T];X))$, and any limit point $\ppi\in\Prob(C([0,T];X))$ is concentrated  on $AC([0,T];X)$.
\end{proposition}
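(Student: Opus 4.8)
The plan is to deduce tightness from the two classical conditions for a family of laws on the path space $C([0,T];X)$ over a Polish space $X$ (the metric-space analogue of Billingsley's criterion): \emph{(i)} for every $t\in[0,T]$ the marginals $(\ev_t)_\#\ppi_n$ form a tight family in $\Prob(X)$; and \emph{(ii)} the moduli of continuity are uniformly small in probability, i.e.\ $\lim_{\delta\downarrow0}\sup_n\ppi_n(\{w(\gamma,\delta)>\eta\})=0$ for every $\eta>0$, where $w(\gamma,\delta):=\sup_{|s-t|\le\delta}\dist(\gamma(s),\gamma(t))$. Once these hold, Prokhorov's theorem gives tightness and the existence of weak limit points. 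Throughout I fix a convex, nondecreasing, lower semicontinuous $\Psi\le\Theta$ with superlinear growth (such $\Psi$ exists because superlinearity forces $\Theta$ to dominate every affine map $z\mapsto mz-c_m$), and set $E(\gamma):=\int_0^T\Psi(\abs{\dot\gamma}(t))\di t$, with $E(\gamma):=+\infty$ if $\gamma\notin AC([0,T];X)$. The single driving estimate, obtained from regularity (a) in the form $(\ev_t)_\#\ppi_n\le C_0\meas_n$ with $C_0:=\sup_nC(\ppi_n,\meas_n)$, from Fubini, from $\Psi\le\Theta$, and from the metric-speed bound $\abs{\dot\gamma}(t)\le\abs{\bb_{n,t}}(\gamma(t))$ of \autoref{rem:metric-speed} combined with monotonicity of $\Psi$, reads
\[
\sup_n\int E(\gamma)\di\ppi_n(\gamma)\le C_0\sup_n\int_0^T\!\int_X\Theta(\abs{\bb_{n,t}})\di\meas_n\di t=:\Lambda_0<\infty.
\]

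For \emph{(ii)} I would use that superlinear growth provides, for each $\varepsilon>0$, a constant $R_\varepsilon$ with $z\le R_\varepsilon+\varepsilon\Psi(z)$ for all $z\ge0$. Covering $[0,T]$ by overlapping intervals of length $2\delta$ and using $\dist(\gamma(s),\gamma(t))\le\int_s^t\abs{\dot\gamma}(r)\di r$ gives $w(\gamma,\delta)\le 2R_\varepsilon\delta+\varepsilon E(\gamma)$; hence $\{w(\gamma,\delta)>\eta\}\subseteq\{E>\eta/(2\varepsilon)\}$ as soon as $2R_\varepsilon\delta\le\eta/2$, and Chebyshev together with the displayed bound yields $\sup_n\ppi_n(\{w(\gamma,\delta)>\eta\})\le 2\varepsilon\Lambda_0/\eta$ for all such $\delta$. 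Letting $\varepsilon\downarrow0$ (so that $\delta\downarrow0$ as well) proves (ii).

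Condition \emph{(i)} is where the absence of local compactness makes the argument genuinely delicate, and I expect it to be the main obstacle: bounded sets of $X$ need not be precompact, so equicontinuity together with control of $\gamma(0)$ is \emph{not} enough, and tightness must instead be established at each time separately, exploiting the domination $(\ev_t)_\#\ppi_n\le C_0\meas_n$ at that time. I would proceed in two steps. First, \emph{uniform concentration on balls}: from $\dist(\gamma(t),\gamma(0))\le\int_0^T\abs{\dot\gamma}$ one has $\{\gamma(t)\notin B_R(\bar x)\}\subseteq\{\gamma(0)\notin B_{R/2}(\bar x)\}\cup\{\int_0^T\abs{\dot\gamma}>R/2\}$, whose first set has uniformly small mass for $R$ large by the no-dispersion hypothesis \eqref{eq:no_dispersion} and whose second set is controlled, again via $z\le R_\varepsilon+\varepsilon\Psi(z)$ and Chebyshev, by $\Lambda_0$; thus $\sup_n(\ev_t)_\#\ppi_n(X\setminus B_R(\bar x))\to0$ as $R\to\infty$. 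Second, the \emph{upgrade to tightness on $\bar{B}_R(\bar x)$}, which is the crux: here $(\ev_t)_\#\ppi_n\res\bar{B}_R(\bar x)\le C_0\meas_n\res\bar{B}_R(\bar x)$, and I would exploit $\meas_n\to\meas$ through the portmanteau inequality $\limsup_n\meas_n(C)\le\meas(C)$, valid for closed bounded $C$ (test $\chi_C$ from above by functions in $\Cbs(X)$ decreasing to it). Since the finite measure $\meas\res\bar{B}_R(\bar x)$ on the Polish space $X$ is tight (Ulam), for each $\rho,\eta>0$ there is a finite union $G$ of $\rho$-balls with $\meas(\bar{B}_R(\bar x)\setminus G)<\eta$; applying the portmanteau inequality to the closed bounded set $\bar{B}_R(\bar x)\setminus G$ gives $\meas_n(\bar{B}_R(\bar x)\setminus G)<\eta$ for all large $n$, while the finitely many remaining indices are absorbed by enlarging $G$ (each $\meas_n\res\bar{B}_R(\bar x)$ being individually tight). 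This yields uniform total boundedness, hence, by completeness of $X$ and a diagonal choice over $\rho=1/k$, tightness of $\{(\ev_t)_\#\ppi_n\res\bar{B}_R(\bar x)\}_n$; combined with the concentration step this gives (i).

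Finally, Prokhorov's theorem turns (i)--(ii) into tightness of $\{\ppi_n\}$ and the existence of a subsequence $\ppi_{n(k)}$ weakly converging to some $\ppi\in\Prob(C([0,T];X))$. To see that $\ppi$ is concentrated on $AC([0,T];X)$ I would pass the energy bound to the limit: the functional $E$ is lower semicontinuous on $C([0,T];X)$ for the uniform topology (standard lower semicontinuity of the metric action, $\Psi$ being convex, nondecreasing and lower semicontinuous, see e.g.\ \cite{AmbrosioGigliSavare08}), so that $\int E\di\ppi\le\liminf_k\int E\di\ppi_{n(k)}\le\Lambda_0<\infty$. In particular $E(\gamma)<\infty$, hence $\gamma\in AC([0,T];X)$, for $\ppi$-a.e.\ $\gamma$, which is the desired conclusion.
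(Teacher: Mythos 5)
Your proof is correct and follows essentially the same route as the paper's: the action bound $\sup_n\int\!\int_0^T\Theta(|\dot\gamma|)\di t\di\ppi_n<\infty$ (obtained, as in the paper, from \autoref{rem:metric-speed} and the marginal domination $(\ev_t)_\#\ppi_n\le C\meas_n$), the no-dispersion hypothesis \eqref{eq:no_dispersion}, and the tightness of $\meas_n$ restricted to closed balls play exactly the same three roles, the only difference being that the paper bundles your conditions (i)--(ii) into a single coercive functional on $C([0,T];X)$ whose sublevel sets are compact by Ascoli--Arzel\`a, whereas you verify the two conditions of the path-space tightness criterion separately. Your explicit convex nondecreasing superlinear minorant of $\Theta$ and the lower-semicontinuity argument showing that limit points are concentrated on $AC([0,T];X)$ are welcome refinements of two points the paper treats only implicitly.
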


\begin{proof} In order to prove tightness,
we build a coercive functional $\Psi:C([0,T];X)\to[0,\infty]$ (i.e., a map with relatively compact sublevel sets) such that
\[
\sup_n\int_{C([0,T];X)} \Psi(\gamma) \di\ppi_n(\gamma) < \infty.
\]
Let $R_i\uparrow\infty$ with $\meas(\partial B_{R_i}(\bar x))=0$. From the convergence
of $\meas_n\res \bar{B}_{R_i}(\bar x)$ to $\meas\res\bar{B}_{R_i}(\bar x)$ we obtain coercive functions
$\psi_i:\bar{B}_{R_i}(\bar x)\to [0,\infty]$ such that
$$
\sup_n\int_{\bar{B}_{R_i}(\bar x)}\psi_i\di\meas_n\leq 2^{-i}.
$$
Setting $\psi_i=0$ on $X\setminus\bar{B}_{R_i}(\bar x)$ and $\psi=\sum_{i\geq 1}\psi_i$, we obtain
$$
\sup_n\int_X\psi\di\meas_n\leq 1
$$
and, since $\psi\geq\psi_i$ and $R_i\to\infty$, all sets $\{\psi\leq t\}\cap\bar{B}_R(\bar x)$ are relatively compact in $X$. 
Using \eqref{eq:no_dispersion} we can also find $\phi:X\to [0,\infty]$ with $\phi(x)\to\infty$ as $\dist(x,\bar x)\to\infty$
and $\sup_n\int_X\phi(\gamma(0))\di\ppi_n(\gamma)<\infty$.

Now fix a countable dense set $\{t_j\}$ in $[0,T]$ and define
\[
\Psi(\gamma) = \phi(\gamma(0))+\sum_{j\in\setN} 2^{-j} \psi(\gamma(t_j)) +
\int_0^T \Theta(\abs{\dot\gamma}(t)) \di t
\]
if $\gamma \in AC([0,T];X)$, $\Psi(\gamma)=\infty$ otherwise. Thanks to the above-mentioned local compactness
property of the sublevel sets of $\psi$, it is easily seen  (as in Ascoli-Arzel\`a's theorem)
that $\Psi$ is coercive. It is now clear, using the condition $(\ev_{t_j})_\#\ppi_n\leq C(\ppi_n,\meas_n)\meas_n$,  that 
$$
\sup_n \int_{C([0,T];X)}\bigg[\phi(\gamma(0))+\sum_{j\in\setN} 2^{-j} \psi(\gamma(t_j))\bigg]\di\ppi_n(\gamma)<\infty.
$$
In addition
\begin{eqnarray*}
\int_{C([0,T];X)} \int_0^T \Theta(\abs{\dot\gamma}(t)) \di t\di\ppi_n(\gamma) &\leq& 
	\int_0^T \int_{C([0,T];X)} \Theta(\abs{\bb_{n,t}}(\gamma(t)) )  \di \ppi_n(\gamma)\di t \\
&\leq& C (\ppi_n,\meas_n)\int_0^T \int_X \Theta(\abs{\bb_{n,t}}) \di \meas_n \di t,
\end{eqnarray*}
where the first inequality above follows from \autoref{rem:metric-speed}.
\end{proof}

In the following proposition we prove that limits of flows relative to $\bb_{n,t}$ are
flows relative to $\bb_t$, if $\bb_{n,t}$ strongly converge according to \eqref{eq:strong-stability-hilbert-bis}. This condition,
stated in the minimal form needed for the validity of the proof, is in many cases implied by the notion of
strong convergence of the previous sections, for instance in $\RCD(K,\infty)$ spaces the class
$\mathcal D=P_{\setQ^+}\Algebra_\bs$ is dense w.r.t.~$\meas$-flat convergence and one can use
the convergence in measure of $\bb_{n,t}(f)$ to $\bb_t(f)$ for $f\in\mathcal D$, together with uniform
bounds in $L^1+L^\infty$, to prove \eqref{eq:strong-stability-hilbert-bis}. We do not discuss in generality this point,
referring to the specific examples discussed in \autoref{sec:stabile_flow}.

\begin{proposition}[Closure theorem]\label{prop:closure}
Assume that $\ppi_n\in\Prob(C([0,T];X))$ are regular generalized flows relative to $\bb_{n,t}$ and that:
\begin{itemize}
\item[(i)] $\sup_nC(\ppi_n,\meas_n)<\infty$ and $\ppi_n$ weakly converge to $\ppi\in\Prob(C([0,T];X))$, with
$\ppi$ concentrated on $AC([0,T];X)$;
\item[(ii)] for all $f$ in a class $\mathcal D\subset\Lipb(X)$, dense w.r.t.~$\meas$-flat convergence,  one has
\begin{equation}\label{eq:strong-stability-hilbert-bis}
\lim_{n\to\infty}\int_0^T\int_X \bb_{n,t}(f)v_n\di\meas_n\di t=\int_0^T\int_X\bb_t(f)v\di\meas\di t
\end{equation}
whenever $0\leq v_n\leq C<\infty$ and $v_n(t,\plchldr)\meas_n\in\Prob(X)$ weakly converge to
$v(t,\plchldr)\meas\in\Prob(X)$ for all $t\in (0,T)$;
\item[(iii)] $\int_0^T\|\bb_t(f)\|_{L^1+L^\infty(X,\meas)}\di t<\infty$ and $\sup_n\int_0^T\|\bb_{n,t}(f)\|_{L^1+L^\infty(X,\meas_n)}\di t<\infty$ for all $f\in\Lipb(X)$;
\item[(iv)] either $H^{1,2}(X,\dist,\meas)$ is reflexive, or $\int_0^T\int_{B_R(\bar x)}|\div\bb_t|\di\meas\di t<\infty$ for all $R>0$.
\end{itemize}
Then $\ppi$ is a regular generalized flow relative to $\bb_t$.
\end{proposition}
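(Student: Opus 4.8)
\emph{Setting up the two defining properties.} The plan is to verify the two conditions of \autoref{dregflow} for the limit $\ppi$. Property~(a) follows from lower semicontinuity: the evaluation map $\ev_t\colon C([0,T];X)\to X$ is continuous, so for each fixed $t$ the weak convergence $\ppi_n\to\ppi$ gives $(\ev_t)_\#\ppi_n\to(\ev_t)_\#\ppi$ weakly, while $\meas_n\to\meas$ weakly. Hence, for every nonnegative $\phi\in\Cbs(X)$ and with $C:=\sup_n C(\ppi_n,\meas_n)<\infty$,
\[
\int_X\phi\di(\ev_t)_\#\ppi=\lim_{n\to\infty}\int_X\phi\di(\ev_t)_\#\ppi_n\le\lim_{n\to\infty}C\int_X\phi\di\meas_n=C\int_X\phi\di\meas,
\]
so $(\ev_t)_\#\ppi\le C\meas$ for all $t$. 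The concentration of $\ppi$ on $AC([0,T];X)$ is granted by~(i).

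\emph{Reduction to the continuity equation.} For property~(b) I would invoke the concentration criterion \autoref{prop:conc}, whose hypotheses hold for $\ppi$: concentration on $AC$, the condition $(\ev_t)_\#\ppi\ll\meas$ from property~(a), and the integrability \eqref{eq:ce-integrability} of $\bb_t$. The last point follows since, writing $\mu_t=(\ev_t)_\#\ppi=u_t\meas$ with $u_t\le C$ and $\mu_t\in\Prob(X)$, a decomposition in $L^1+L^\infty$ and assumption~(iii) give $\int_0^T\int_X|\bb_t(f)|\di\mu_t\di t\le\max(C,1)\int_0^T\|\bb_t(f)\|_{L^1+L^\infty(X,\meas)}\di t<\infty$. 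Thus it suffices to prove that for every $g\in\Cb(C([0,T];X))$, $g\ge0$, the curve $\mu^g_t:=(\ev_t)_\#(g\ppi)$ solves \eqref{eq:ce}.

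\emph{Passing to the limit.} Fix such a $g$. For each $n$, normalizing $g$ so that $g\ppi_n\in\Prob(C([0,T];X))$ (possible since $\int g\di\ppi_n\to\int g\di\ppi$) and checking as above that each $\ppi_n$ satisfies the hypotheses of \autoref{prop:conc} via~(iii), the forward implication of that proposition (the computation of \autoref{rem:pi_induce_conti}) shows that $\mu^g_{n,t}:=(\ev_t)_\#(g\ppi_n)$ solves the continuity equation relative to $\bb_{n,t}$. I would pass to the limit in its weak formulation \eqref{eq:ce1}. Since $g$ is bounded continuous, $g\ppi_n\to g\ppi$ weakly, so the transport and boundary terms $\int_X f\di\mu^g_{n,t}=\int f(\gamma(t))g(\gamma)\di\ppi_n$ converge to their analogues for $\mu^g_t$, uniformly bounded in $t$, and pass to the limit against $\chi'$ and $\chi(0)$ by dominated convergence. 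For the drift term, write $\mu^g_{n,t}=v_{n,t}\meas_n$: then $0\le v_{n,t}\le\|g\|_\infty C$ and $v_{n,t}\meas_n\to v_t\meas$ weakly for each $t$, so assumption~(ii), i.e.\ \eqref{eq:strong-stability-hilbert-bis}, yields $\int_0^T\int_X\bb_{n,t}(f)v_{n,t}\di\meas_n\di t\to\int_0^T\int_X\bb_t(f)v_t\di\meas\di t$ for $f\in\mathcal D$. The time weight $\chi$ is reinstated by a time-localization argument: testing the (uniformly bounded, per-time weakly convergent) densities against step functions in $t$ and applying \eqref{eq:strong-stability-hilbert-bis} on the resulting subintervals establishes the equivalent integrated identity $\int_X f\di\mu^g_t-\int_X f\di\mu^g_s=\int_s^t\int_X\bb_r(f)\di\mu^g_r\di r$ for all $s\le t$, which is \eqref{eq:ce1}.

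\emph{Extension to all test functions and main obstacle.} It remains to upgrade the identity from $f\in\mathcal D$ to all $f\in\Lip_\bs(X)$, as \autoref{prop:conc} requires. Here I would use that $\mathcal D$ is dense for $\meas$-flat convergence: given $f\in\Lip_\bs(X)$, choose $f_k\in\mathcal D$ with $f_k\to f$ in the $\meas$-flat sense and pass to the limit in $k$. The limit $\bb_t(f_k)\to\bb_t(f)$ weakly in $L^1_\loc$ is supplied by the flat-continuity \eqref{eq:continuity_of_der}, which holds under assumption~(iv) by \autoref{lem:continuity_div}, while the uniform bounds~(iii) control the approximating fields $\bb_{n,t}(f_k)$ uniformly in $n$ through their $L^1+L^\infty$ norms. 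I expect this double limit to be the crux of the argument: reconciling the time weight $\chi$ with the form of~(ii), and above all interchanging the $\meas$-flat approximation $f_k\to f$ with the limit $n\to\infty$ while controlling $\bb_{n,t}$ only through the weak $L^1+L^\infty$ bounds of~(iii), is precisely where hypotheses~(iii) and~(iv) are indispensable. Once this is done, \autoref{prop:conc} yields \eqref{eq:cond_b} for all $f\in\Lip_\bs(X)$, which together with property~(a) shows that $\ppi$ is a regular generalized flow relative to $\bb_t$.
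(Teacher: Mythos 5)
Your proposal is correct and follows essentially the same route as the paper: normalize $g$ so that $g\ppi_n\in\Prob(C([0,T];X))$, reduce to the continuity equation via the concentration criterion \autoref{prop:conc}, pass to the limit in the weak formulation using (ii) applied to the uniformly bounded densities $u^{g_n}_{n,t}$, and then extend the limiting identity from $\mathcal D$ to $\Lip_\bs(X)$ via (iii), (iv) and \autoref{lem:continuity_div}. The only clarification worth making is that the final approximation $f_k\to f$ in the $\meas$-flat sense is carried out entirely at the level of the limiting equation (which involves only $\bb_t$ and $\mu^g_t$), after the limit $n\to\infty$ has been taken for each fixed $f\in\mathcal D$, so no interchange of the two limits is required and the ``crux'' you anticipate in the last paragraph does not actually arise.
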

\begin{proof}
Fix $g \in \Cb( C([0,T];X) )$ with $g\ppi \in \Prob( C([0,T]; X) )$,  set $g_n = g/ \int g \di \ppi_n$, so that, for $n$ large enough, the measures $\ppi_n^{g_n}:=g_n\ppi_n   \in \Prob( C([0,T]; X) )$ are well-defined and weakly converge to $\ppi^g:= g\ppi$.

By \autoref{prop:conc}, applied with $\pi_n$ and $g_n$, the marginal measures $\mu_{n,t}^{g_n}:=(\ev_t)_\#\ppi_n^{g_n}$ solve \eqref{eq:ce} with $\bar\mu=\mu_{n,0}^{g_n}$, have uniformly bounded (w.r.t.~$t\in [0,T]$ and $n$ sufficiently large) densities w.r.t.\ $\meas_n$,
\[ u_{n,t}^{g_n} \le \sup_n  C(\ppi_n,\meas_n) \norm{g_n}_\infty \] and weakly converge to $\mu^g_t:=(\ev_t)_\#\ppi^g$, since $\ev_t$ is a continuous map. 
Passing to the limit as $n\to\infty$ in the weak formulation
$$
-\int_0^T\chi'(t)\int_X f \di\mu^{g_n}_{n,t}\di t = \int_0^T\chi(t)\int_X \bb_{n,t}(f)u^{g_n}_{n,t} \di\meas_n \di t+\chi(0)\int_Xf\di \mu_{n,0}^{g_n}.
$$ 
with $f\in\mathcal D$ and using the convergence assumption with $v_n(t,x)=T^{-1} u^{g_n}_{n,t}(x)$ we get
\begin{equation*}
-\int_0^T\chi'(t)\int_X f \di\mu^g_t\di t = \int_0^T\chi(t)\int_X \bb_t(f) \di\mu^g_t \di t+\chi(0)\int_Xf\di\mu^g_0.
\end{equation*}
for all $f\in\mathcal D$. By an easy approximation based on assumption (iv) and
\autoref{lem:continuity_div}, the density of $\mathcal D$ gives that $\mu^g_t$ solve 
\eqref{eq:ce} with $\bar\mu=\mu^g_0$. Again by \autoref{prop:conc}, we obtain that $\ppi$ is a regular generalized flow relative to $\bb$.
\end{proof}

\begin{remark} In \autoref{prop:closure} the strong convergence property of $\bb_{n,t}$ expressed by
\eqref{eq:strong-stability-hilbert-bis} can not be replaced, in general, by 
weak convergence together with convergence of the norms. In fact, we have the following simple counterexample. 
Consider in $(\setR^3,\norm{\plchldr}_\infty, \leb^3)$ the vector fields
\[
\bb_n(x) = w_n(x_1)e_2+e_3
\]
with (weak convergences are meant in the weak-$*$ topology of $L^\infty$)
\[
w_n:\setR\to\setR, \qquad w_n \weakto 0, \qquad \abs{w_n}=1, \qquad w_n^+ \weakto \frac12.
\]
For instance, we can take $w_n(t)=\sign\sin(2^n t)$. Then we have that $\bb_n\weakto e_3$ and $\lim_n|\bb_n|=|e_3|$. 
Let us call $\XX_n$ the flow relative to $\bb_n$. Then, given any probability density $u$,
we have the following convergence of the associated generalized 
flows $\eta_n=X_n(\plchldr,  x)_\# (u(x)\leb^3)$:
\[
\int_{\setR^3} \delta_{\XX_n(\plchldr, x)} u(x) \di x
\Weakto{\Prob(C([0,T];\, \setR^3))}
\int_{\setR^3} \frac12\left(\delta_{\gamma^+(\plchldr, x)}+\delta_{\gamma^-(\plchldr, x)}\right)u(x) \di x
\]
where
\[
\gamma^\pm(t,x) = \bigl( x_1, x_2 \pm\frac 12t, x_3+t \bigr).
\]
Therefore, we have that the generalized flows converge in $\Prob(C([0,T];\setR^3))$ to a probability measure
that is not a generalized flow relative to the weak limit $e_3$.

This example raises a problem about possible extensions of \autoref{thm:strong-stability-hilbert}  to the case
when the norm on derivations is not strictly convex, since oscillations in the vector fields might not be detected by the convergence
of the norms.
\end{remark}

We end this section with the following stability result for regular flows, under the assumption that for the limit vector field
any regular generalized flow is induced by a regular flow, see \eqref{eq:generalized-flow-induced}. Notice that with a similar
proof a similar convergence result holds if we replace $\XX_n$ by regular generalized flows $\ppi_n$.

\begin{proposition}[Stability for regular flows, general case]\label{prop:stability-general}
Assume that $\XX_n$ are regular flows relative to $\bb_{n,t}$ with $\bb_{n,t}$ strongly convergent to $\bb_t$
according to \eqref{eq:strong-stability-hilbert-bis} of \autoref{prop:closure},
\[ \sup_n C(\XX_n,\meas_n)<\infty \quad \text{and} \quad
\sup_n \int_0^T\int_X\Theta(|\bb_{n,t}|)\di\meas_n\di t < \infty,
\]
and $\Theta:[0,\infty)\to [0,\infty]$ having more than linear growth at infinity. Assume also that
any regular generalized flow $\ppi$ relative to $\bb$ is induced by a regular flow $\XX$ relative to $\bb$, i.e.~
\begin{equation} \label{eq:generalized-flow-induced} \ppi = \int_X \XX(\plchldr, x) \di (\ev_0)_\# \ppi.\end{equation}
Then, $\XX_n: X \to C([0,T]; X)$ converge in measure towards $\XX: X \to C([0,T]; X)$.
\end{proposition}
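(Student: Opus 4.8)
The plan is to lift each deterministic flow $\XX_n$ to a regular generalized flow, pass to the limit via the tightness and closure results of this section, and then disintegrate the limit back into a deterministic flow using the structural assumption \eqref{eq:generalized-flow-induced}. First I would reduce the asserted convergence in measure of $\Sigma_n:=\XX_n(\plchldr,\cdot)$ to $\Sigma:=\XX(\plchldr,\cdot)$ to a statement about probability measures on $C([0,T];X)$. By definition (with $Y=C([0,T];X)$ and reference measures $\meas_n$), convergence in measure means $\int_X v\,\Phi(\Sigma_n)\di\meas_n\to\int_X v\,\Phi(\Sigma)\di\meas$ for all $v\in\Cbs(X)$ and $\Phi\in\Cb(C([0,T];X))$. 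Splitting $v=v^+-v^-$ reduces to $v\ge 0$, and when $\int_X v\di\meas=0$ the claim is trivial since $\int_X v\di\meas_n\to 0$. So I fix $v\in\Cbs(X)$, $v\ge 0$, with $c:=\int_X v\di\meas>0$, put $c_n:=\int_X v\di\meas_n\to c$, and consider $\bar\mu_n:=c_n^{-1}v\meas_n$, $\bar\mu:=c^{-1}v\meas$ together with the push-forwards $\ppi_n:=(\Sigma_n)_\#\bar\mu_n$ and $\ppi:=(\Sigma)_\#\bar\mu$ in $\Prob(C([0,T];X))$. Since $c_n\to c$, the desired limit for this $v$ is equivalent to the weak convergence $\ppi_n\weakto\ppi$ in $\Prob(C([0,T];X))$.

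The second step is to verify that each $\ppi_n$ is a regular generalized flow relative to $\bb_{n,t}$ in the sense of \autoref{dregflow}. Concentration on $AC([0,T];X)$ is inherited from $\XX_n$ because $\bar\mu_n\ll\meas_n$; the regularity bound $(\ev_t)_\#\ppi_n\le c_n^{-1}\|v\|_\infty\,C(\XX_n,\meas_n)\,\meas_n$ follows from $\XX_n(t,\cdot)_\#\meas_n\le C(\XX_n,\meas_n)\meas_n$, so that $\sup_nC(\ppi_n,\meas_n)<\infty$ by $c_n\to c$ and $\sup_nC(\XX_n,\meas_n)<\infty$; and the ODE condition \eqref{eq:cond_b} for $\ppi_n$ is obtained by pushing forward the $\leb^1\times\meas_n$-a.e.\ identity for $\XX_n$ under $\Id\times\Sigma_n$, using $\leb^1\times\ppi_n=(\Id\times\Sigma_n)_\#(\leb^1\times\bar\mu_n)$. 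Crucially, because $v$ has bounded support, $(\ev_0)_\#\ppi_n=\bar\mu_n$ is concentrated in the fixed bounded set $\supp v$, so the non-dispersion hypothesis \eqref{eq:no_dispersion} of \autoref{prop:tightness} holds automatically. With the uniform bounds on $C(\ppi_n,\meas_n)$ and on $\int_0^T\int_X\Theta(|\bb_{n,t}|)\di\meas_n\di t$, \autoref{prop:tightness} yields tightness of $\{\ppi_n\}$ and that every weak limit point is concentrated on $AC([0,T];X)$.

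Next I would extract a subsequence $\ppi_{n(k)}\weakto\tilde\ppi$ and apply the closure theorem \autoref{prop:closure}: hypothesis (i) holds by the previous step, (ii) is exactly the assumed strong convergence \eqref{eq:strong-stability-hilbert-bis}, the integrability conditions (iii) follow from the superlinear growth of $\Theta$ via $z\le c(1+\Theta(z))$ (as in the integrability remark following \autoref{def:ce}; the bound for the limit field coming from lower semicontinuity of $\int_0^T\int_X\Theta(|\cdot|)$), and (iv) is granted by the ambient regularity, e.g.\ reflexivity of $H^{1,2}(X,\dist,\meas)$ as in the $\RCD$ applications. Hence $\tilde\ppi$ is a regular generalized flow relative to $\bb_t$. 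Since $\XX_n(0,x)=x$ gives $(\ev_0)_\#\ppi_{n(k)}=\bar\mu_{n(k)}\weakto\bar\mu$, we have $(\ev_0)_\#\tilde\ppi=\bar\mu$, so the structural assumption \eqref{eq:generalized-flow-induced} forces $\tilde\ppi=\int_X\delta_{\{\XX(\plchldr,x)\}}\di\bar\mu(x)=(\Sigma)_\#\bar\mu=\ppi$. As every weak limit point of the tight family $\{\ppi_n\}$ equals the single measure $\ppi$, the full sequence converges, $\ppi_n\weakto\ppi$; running this for every admissible $v$ gives, by the first step, the convergence in measure of $\XX_n$ to $\XX$.

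I expect the main obstacle to be the bookkeeping around the closure theorem rather than an isolated hard estimate: one must ensure its hypotheses (iii)--(iv) are genuinely available in this ``general case'' (the uniform superlinear bound and the ambient reflexivity or the divergence integrability $\int_0^T\int_{B_R(\bar x)}|\div\bb_t|\di\meas\di t<\infty$), and that the ODE condition transfers correctly along $\Id\times\Sigma_n$, with all statements independent of the chosen Borel representatives of $\bb_{n,t}(f)$ and with $x\mapsto\XX_n(\plchldr,x)$ a Borel map into $C([0,T];X)$. The identification $\tilde\ppi=\ppi$ through \eqref{eq:generalized-flow-induced} is the decisive point that upgrades weak compactness of measures to genuine convergence, and it rests on the uniqueness of the regular flow associated with the limit field $\bb$, which guarantees that the same $\XX$ serves for every test density $v$.
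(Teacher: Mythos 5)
Your proposal is correct and follows essentially the same route as the paper: normalize $v\meas_n$ to probability measures, push them forward under $x\mapsto\XX_n(\plchldr,x)$ to get regular generalized flows, apply the tightness and closure propositions (with the non-dispersion condition automatic from the bounded support of $v$), and identify the limit via \eqref{eq:generalized-flow-induced} before translating back into the definition of convergence in measure. The only differences are presentational (you phrase the conclusion as weak convergence of the normalized laws $\ppi_n$ rather than testing $(\Id\times\XX_n)_\#(v\meas_n)$ against product functions as the paper does), and your explicit attention to hypotheses (iii)--(iv) of the closure theorem is, if anything, slightly more careful than the paper's own proof.
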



\begin{proof}
To deduce convergence in measure, we rely on \autoref{prop:convergence-in-measure}, with $Y=C([0,T]; X)$. Let $v \in \Cbs(X)$ nonnegative with 
$\int_X v \di \meas =1$, let $\bar{x} \in X$  and $\bar{R}>0$ large enough so that $\supp(v) \subset B_{\bar R}(\bar{x})$. We define $c_n := \int_X v \di \meas_n$, which converge to $1$ as $n \to \infty$,  and, for $n$ large enough, $\ppi_n  := c_n^{-1}\int_X \XX(\plchldr, x) v(x) \di \meas_n  \in\Prob(C([0,T]; X))$. Then, $\ppi_n$ is  a regular generalized flow, relative to $\bb_n$, with $C(\ppi_n, \meas_n) \le c_n^{-1} C(\XX_n, \meas_n) \sup |v|$, hence uniformly bounded in $n$.

By \autoref{prop:tightness}, the family $\{\ppi_n\}$ is tight, in $\Prob(C([0,T]; X))$: indeed, the only non trivial condition to check is \eqref{eq:no_dispersion}, but that expression  is $0$ for $R > \bar{R}$. Next, by \autoref{prop:closure}, any limit point $\ppi$ is a regular generalized flow relative to $\bb$: in this case, we have to check  condition \emph{(iii)} only, which follows from the inequality 
\[ |b_{n,t}(f)| \le  |b_{n,t}| \Lip(f) \le c\Lip(f)(1+\Theta(|\bb_{n,t}|), 
\]
where $c>0$ is some constant such that $|z| \le c(1+ \Theta(z))$ for all $z\in [0,\infty)$. Hence, our assumptions entail then that $\ppi$ can be written as in \eqref{eq:generalized-flow-induced}.

To apply \autoref{prop:convergence-in-measure} with $Y=C([0,T]; X)$, let $\Phi: Y \to\setR$ and $g: X \to\setR$ 
be bounded and continuous functions. Then,
\begin{equation*}\begin{split}
\int_{X \times Y} g(x)\Phi(\gamma) \di(\Id \times \XX_n)_\#(v\meas_n)(x,\gamma) & =
\int_X g(x),\Phi(\XX_n(x,\plchldr))  v(x) \di \meas_n(x) \\
\text{(since $X_n(x,0)=x$ $\meas_n$-a.e.~on $X$)}\quad \quad& = \int_X g(\XX_n(x,0))\Phi(\XX_n(x,\plchldr))  v(x) \di \meas_n(x)  \\
& = \int_Y g(\ev_0(\gamma))\Phi(\gamma)  \di \ppi_n(\gamma)  \\ \text{(as $n \to \infty$)}\quad \quad & 
\to \int_Y g(\ev_0(\gamma))\Phi(\gamma)  \di \ppi(\gamma)\\
\text{(by the representation \eqref{eq:generalized-flow-induced})}\quad\quad & = \int_X g(\XX(x,0))\Phi(\XX(x,\plchldr))  v(x) \di \meas(x)\\
& = \int_{X \times Y } g(x) \Phi( \gamma )  \di (\Id \times \XX)_\# (v\meas)(x,\gamma),
\end{split}\end{equation*}
where the limit as $n\to \infty$ follows from the fact that $g(\ev_0)\Phi$ is a bounded continuous function on $Y$.
\end{proof}

\section{Convergence of flows under Ricci curvature bounds}\label{sec:stabile_flow}

Let us recall the following well-posedness result, from \cite{AmbrosioTrevisan14}, that we report here in a form suitable for our purposes.  
A crucial assumption, leading to well-posedness of the continuity equation, and then to the existence and uniqueness of the
regular flow, is on the ``deformation'' $D^\sym \bb$ of $\bb$ (corresponding to the symmetric part of derivative in the Euclidean
case), which can be defined in a suitable weak sense \cite[Definition~5.2]{AmbrosioTrevisan14}. For example, if $\bb(\plchldr) = (\omega +c) \Gamma(f,\plchldr)$, for $f \in D(\Delta)$, $\omega\in H^{1,2}(X,\dist,\meas) \cap L^\infty(X,\meas)$, $c\in\setR$, then \cite[Theorem~6.7]{AmbrosioTrevisan14} gives $\| D^\sym \bb\|_{4,4}<\infty$.

\begin{theorem}[Well-posedness of flows]\label{thm:well-posedness-rcd}
Let $(X,\dist,\meas)$ be a $\RCD(K,\infty)$ space, for some $K\in\setR$ and let $\bb=(\bb_t)_{t\in (0,T)}$ be a Borel time-dependent derivation with
\[ |\bb|,  \div \bb \in L^1_t(L^2(X,\meas)+L^\infty(X,\meas)), \quad (\div \bb)^- \in L^1_t(L^\infty(X,\meas)),\]
\[ \text{and} \quad \|D^\sym\bb_t\|_{4,4} \in L^1(0,T).\]
Then, there exists a unique regular flow $\XX$ relative to $\bb$ and every regular generalized flow $\ppi$ relative to $\bb$ is induced by $\XX$
as in \eqref{eq:generalized-flow-induced}. 
\end{theorem}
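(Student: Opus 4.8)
The plan is to reduce the Lagrangian problem to the Eulerian one: the whole statement follows once one establishes \emph{well-posedness of the continuity equation} \eqref{eq:ce} (see \autoref{def:ce}) in the class of weakly continuous solutions $\mu_t=u_t\meas$ with uniformly bounded densities, $\sup_{t}\|u_t\|_{L^\infty(X,\meas)}<\infty$. Granting such Eulerian existence and uniqueness, the existence of a regular flow, its uniqueness, and the representation \eqref{eq:generalized-flow-induced} of an arbitrary regular generalized flow all follow from the superposition and selection machinery encoded in \autoref{prop:conc}, \autoref{prop:tightness} and \autoref{rem:pi_induce_conti}. I would therefore organize the proof in three blocks: existence of a generalized flow, uniqueness of bounded solutions of \eqref{eq:ce}, and the passage back from Eulerian uniqueness to a deterministic flow.

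First I would construct a regular generalized flow by a superposition argument. Starting from a bounded nonnegative solution of \eqref{eq:ce}, produced by regularization of $\bb$ and a compactness/stability argument whose compactness rests on the growth control $\int_0^T\int_X\Theta(|\bb_t|)\di\meas\di t<\infty$ together with the divergence bounds, one lifts $\mu_t$ to a measure $\ppi\in\Prob(C([0,T];X))$ concentrated on curves solving the ODE in the weak sense \eqref{eq:cond_b}, with $(\ev_t)_\#\ppi=\mu_t$. The tightness needed here is exactly \autoref{prop:tightness}, and the concentration on $AC([0,T];X)$ solving the ODE is then recovered from \autoref{prop:conc}; the uniform density bound $(\ev_t)_\#\ppi\le C\meas$ uses $(\div\bb)^-\in L^1_t(L^\infty)$ through a Gr\"onwall estimate on $\|u_t\|_\infty$. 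This yields a regular generalized flow relative to $\bb$.

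The analytic heart is the uniqueness of bounded solutions of \eqref{eq:ce}, and I expect this to be the main obstacle. By linearity it suffices to show that a solution $u_t$ with $u_0=0$ and $\sup_t\|u_t\|_\infty<\infty$ vanishes. The strategy is a renormalization argument: one regularizes $u_t$ by the heat semigroup, $u_t^\eps:=P_\eps u_t$, and tests the equation against a convex entropy $\beta$ (for instance $\beta(s)=s^2$). The only non-closed term is a commutator between the derivation $\bb_t$ and the mollifier $P_\eps$, of the form $\bb_t(P_\eps f)-P_\eps(\bb_t f)$; controlling it as $\eps\downarrow0$ is precisely where the deformation bound $\|D^\sym\bb_t\|_{4,4}\in L^1(0,T)$ enters, playing the role of the DiPerna--Lions commutator estimate in the metric setting (the $L^4$--$L^4$ integrability being dictated by H\"older's inequality against $|\rmD P_\eps u_t|$ and $u_t^\eps$). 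Once the commutator is shown to be negligible, one obtains a differential inequality $\didi{t}\int_X\beta(u_t)\di\meas\le C(t)\int_X\beta(u_t)\di\meas$ with $C(t)$ governed by $\|(\div\bb_t)^-\|_\infty$, and Gr\"onwall forces $u_t\equiv0$. Hence bounded solutions of \eqref{eq:ce} with a given initial datum are unique.

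Finally I would derive the Lagrangian conclusions from Eulerian uniqueness. Given any regular generalized flow $\ppi$ relative to $\bb$, disintegrate it with respect to $\ev_0$ as $\ppi=\int_X\ppi_x\di(\ev_0)_\#\ppi$. If on a set of positive $\meas$-measure the conditional $\ppi_x$ were not a Dirac mass, one could split $\ppi$ into two distinct regular generalized flows sharing the same bounded marginals $(\ev_t)_\#\ppi=\mu_t$; but each of them induces, via \autoref{prop:conc} and \autoref{rem:pi_induce_conti}, the same solution of the continuity equation, contradicting the uniqueness just proved. Therefore $\ppi_x=\delta_{\XX(\plchldr,x)}$ for $\meas$-a.e.\ $x$, which both defines the map $\XX$, establishing \eqref{eq:generalized-flow-induced}, and shows that every regular generalized flow is induced by this single $\XX$. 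Combined with the existence of a generalized flow from the first block, this produces the regular flow $\XX$ and yields its uniqueness, since any two regular flows induce the same $\ppi$.
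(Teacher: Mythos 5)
Your proposal is correct and follows essentially the same route as the paper: the paper's own proof is a two-line citation of \cite{AmbrosioTrevisan14} (existence and uniqueness of bounded weakly continuous solutions of \eqref{eq:ce} from Theorems~4.3 and~5.4 there, then the superposition/selection Theorems~8.3 and~8.4 to pass to the flow and to the representation \eqref{eq:generalized-flow-induced}), and your three blocks are precisely an outline of the content of those cited results, including the role of the commutator estimate governed by $\|D^\sym\bb_t\|_{4,4}$ in the Eulerian uniqueness step and the disintegration argument turning Eulerian uniqueness into Dirac conditionals.
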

\begin{proof}
By \cite[Theorem~4.3, Theorem~5.4]{AmbrosioTrevisan14}, the continuity equation \eqref{eq:ce} associated to $\bb$ has existence and 
uniqueness of solutions in the class of weakly continuous solutions $\mu_t=u_t\meas\in\Prob(X)$, $t\in [0,T]$, with $\|u_t\|_{L^\infty(X,\meas)}$
bounded in $[0,T]$, so that  \cite[Theorem~8.3]{AmbrosioTrevisan14} applies, providing existence and uniqueness 
of a regular flow $\XX$ relative to $\bb$. The last statement follows from \cite[Theorem~8.4]{AmbrosioTrevisan14}.
\end{proof}

We state and prove our main result concerning stability of flows on converging $\RCD(K,\infty)$ metric measure spaces. 
Let us stress the fact that bounds on divergence and deformation are assumed only for the limit derivation.

\begin{theorem}[Stability of regular flows under curvature assumptions]\label{thm:stabflow_curv}
Let $\bb=(\bb_t)_{t\in (0,T)}\in L^1_t(\Der^1_\loc(X,\dist, \meas))$ as in \autoref{thm:well-posedness-rcd}.
For $n \ge 1$, let $\bb_n=(\bb_{n,t})_{t\in (0,T)}\in L^1_t(\Der^1_\loc(X,\dist, \meas_n))$ and assume that
$$
\lim_{n\to\infty}\int_0^T\chi(t)\int_X\bb_{n,t}(a)v\di\meas_n\di t=
\int_0^T\chi(t)\int_X\bb_t(a)v\di\meas\di t
$$
for all $\chi\in C_c(0,T),\,\,v\in\Cbs(X),\,\,a\in P_{\setQ_+}\Algebra_\bs$, and
\begin{equation*}
\limsup_{n\to\infty}\int_0^T\int_X\Theta(|\bb_{n,t}|)\di\meas_n\,\di t\leq
\int_0^T\int_X\Theta(|\bb|_t)\di\meas\,\di t<\infty
\end{equation*}
with $\Theta:[0,\infty)\to [0,\infty)$ strictly convex and having more than linear growth at infinity.

Let $\XX_n$ be regular flows relative to $\bb_n$, and $\XX$ be the ($\meas$-a.e.~unique) regular flow relative to $\bb$. If $\sup_{n} C(\XX_n, \meas_n) < \infty$, then $\XX_n:X \to C([0,T]; X)$ converge in measure towards $\XX:X \to C([0,T]; X)$. 
\end{theorem}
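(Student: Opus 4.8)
The plan is to reduce everything to the abstract stability result \autoref{prop:stability-general}, by verifying its three hypotheses. Since $\bb$ is ``as in \autoref{thm:well-posedness-rcd}'', the limit structure $(X,\dist,\meas)$ is $\RCD(K,\infty)$, so the regularity assumptions (a), (b) of \autoref{sec:two_weak} hold and \autoref{thm:well-posedness-rcd} guarantees both that $\XX$ is the ($\meas$-a.e.~unique) regular flow relative to $\bb$ and, crucially, that \emph{every} regular generalized flow $\ppi$ relative to $\bb$ is induced by $\XX$ via \eqref{eq:generalized-flow-induced}; this is exactly the structural hypothesis required by \autoref{prop:stability-general}. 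Next, the assumptions of the present theorem on the time-averaged weak convergence of $\bb_{n,t}(a)$ and on the $\limsup$ bound for $\int_0^T\!\int_X\Theta(|\bb_{n,t}|)\di\meas_n\di t$ are verbatim those of \autoref{thm:strong-stability-hilbert}; applying it, I obtain that $\bb_{n,t}$ converge strongly to $\bb_t$, i.e.~$\bb_{n,t}(a)$ converge in measure to $\bb_t(a)$ (with reference measures $\leb^1\times\meas_n$ and $\leb^1\times\meas$) for every $a\in P_{\setQ_+}\Algebra_\bs$.

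The technical core is to upgrade this convergence in measure to the strong convergence property \eqref{eq:strong-stability-hilbert-bis} demanded by \autoref{prop:stability-general}, taking $\mathcal D=P_{\setQ_+}\Algebra_\bs$ (dense w.r.t.~$\meas$-flat convergence on $\RCD(K,\infty)$ spaces). Fix $f\in\mathcal D$ and $0\le v_n\le C$ with $v_n(t,\plchldr)\meas_n\weakto v(t,\plchldr)\meas$ in $\Prob(X)$ for all $t$. First, dominated convergence in $t$ shows that $v_n(\leb^1\times\meas_n)\weakto v(\leb^1\times\meas)$ on $(0,T)\times X$; since each $v_n(t,\plchldr)\meas_n$ is a probability measure, the total mass equals $T$ and converges to that of the limit, so this family is tight on $[0,T]\times X$ by Prokhorov's theorem. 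I then proceed by a double approximation: I truncate $\bb_{n,t}(f)$ at level $M$ via $T_M(z)=\max\{-M,\min\{M,z\}\}$, controlling the tail $\int_{\{|\bb_{n,t}(f)|>M\}}|\bb_{n,t}(f)|v_n\di\meas_n\di t$ uniformly in $n$ by de la Vall\'ee-Poussin equi-integrability (which follows from $|\bb_{n,t}(f)|\le\Lip(f)|\bb_{n,t}|$ and the uniform bound on $\int\Theta(|\bb_{n,t}|)$, $\Theta$ superlinear); and I localize $v_n$ by a continuous cutoff $\chi_R\in\Cbs(X)$ with $\mathbf 1_{B_R(\bar x)}\le\chi_R\le 1$, the error being at most $M$ times a tail mass that vanishes as $R\to\infty$, uniformly in $n$, by the tightness just established. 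For the resulting bounded, compactly supported pairing $v_n\chi_R\,T_M(\bb_{n,t}(f))$ I apply \autoref{prop:convergence-in-measure}(b) (on the base space $(0,T)\times X$, with $Y=\setR$ and $\Phi=T_M$) to pass to the limit, and then let $n\to\infty$, $R\to\infty$, $M\to\infty$ in this order to conclude \eqref{eq:strong-stability-hilbert-bis}.

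With \eqref{eq:strong-stability-hilbert-bis} established and the superposition property \eqref{eq:generalized-flow-induced} in hand, the remaining hypotheses of \autoref{prop:stability-general} — the uniform non-concentration bound $\sup_n C(\XX_n,\meas_n)<\infty$ and the uniform growth bound $\sup_n\int_0^T\!\int_X\Theta(|\bb_{n,t}|)\di\meas_n\di t<\infty$ (immediate from the finiteness of the $\limsup$) — are precisely the data of the statement, so \autoref{prop:stability-general} yields convergence in measure of $\XX_n$ to $\XX$. I expect the main obstacle to be the verification of \eqref{eq:strong-stability-hilbert-bis}: the difficulty is that $f=P_sf'$ has unbounded support while $\bb_{n,t}(f)$ is only superlinearly integrable, so neither the compact-support hypothesis of \autoref{prop:convergence-in-measure}(b) nor a crude bounded-integrand estimate applies directly; it is exactly the interplay of equi-integrability (to truncate $\bb_{n,t}(f)$) with the tightness of $v_n(\leb^1\times\meas_n)$ (to localize $v_n$) that legitimizes the passage to the limit.
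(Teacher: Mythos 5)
Your proof is correct and follows the same route as the paper: reduce to \autoref{prop:stability-general} with $\mathcal D=P_{\setQ_+}\Algebra_\bs$, obtain strong convergence of the derivations from \autoref{thm:strong-stability-hilbert}, and get the superposition property \eqref{eq:generalized-flow-induced} from \autoref{thm:well-posedness-rcd}. The only difference is that you carry out in full the truncation/localization argument upgrading convergence in measure to \eqref{eq:strong-stability-hilbert-bis} --- a step the paper only sketches in the remark preceding \autoref{prop:closure} --- and your version (equi-integrability of $\bb_{n,t}(f)$ via the superlinear $\Theta$-bound, tightness of $v_n(\leb^1\times\meas_n)$ from convergence of total masses, then \autoref{prop:convergence-in-measure}(b) on $(0,T)\times X$, with the limits taken in the order $n\to\infty$, $R\to\infty$, $M\to\infty$) is sound and more complete than what the paper records.
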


\begin{proof}
The thesis follows from \autoref{prop:stability-general}, with
$\mathcal D=P_{\setQ^+}\Algebra_\bs$. Indeed, it is sufficient to notice that, by \autoref{thm:compa2} and 
\autoref{thm:strong-stability-hilbert} we obtain strong convergence of $\bb_n$ to $\bb$, and by \autoref{thm:well-posedness-rcd}, we deduce that the representation \eqref{eq:generalized-flow-induced} holds true for any generalized flow $\ppi$ relative to $\bb$.
\end{proof}

\begin{example} [Convergence of flows associated to resolvents] In the setting of \autoref{ex:resolvent},
since the operator $(\lambda -\Delta_n)^{-1}$ are Markovian, if for some constant $c$ one has $|f_n| \le c$ $\meas_n$-a.e.\, then it follows $|u_n| \le c$ $\meas_n$-a.e.
and therefore $\div \bb_{u_n} =\lambda(u_n-f_n)\in L^\infty(X,\meas_n)$ uniformly w.r.t.~$n$. Now, by  \cite[Theorem~6.7]{AmbrosioTrevisan14}, one has $\| D^\sym\bb_{u_n}\|_{4,4} < \infty$. We are in a position to deduce that there exist the $\meas_n$-a.e.~unique regular flows $\XX_n$ relative to $\bb_{u_n}$ and that, as $n\to \infty$, 
$\XX_n$ converge in measure in $C([0,T];X)$ to the $\meas$-a.e.~unique flow $\XX$ relative to $\bb_u$.
\end{example}

\begin{example}[Convergence of flows associated to Laplacian eigenfunctions] In the setting of \autoref{ex:eigenfunctions},
for $k$ fixed set $\bb_n=\bb_{u_n^k}$ and $\bb=\bb_{u^k}$.
From \cite[Theorem~6.7]{AmbrosioTrevisan14} we obtain $\| D^\sym\bb_n\|_{4,4} < \infty$, but in order to obtain the validity of the
assumption $(\div \bb_n)^- \in L^\infty(X,\meas)$ in \autoref{thm:well-posedness-rcd}, the $\RCD(K,\infty)$ assumption is not sufficient, e.g., 
in case of Gaussian space where the eigenfunctions are Hermite polynomials. To obtain non trivial examples, we may restrict 
ourselves to metric measure spaces $(X,\dist,\meas_n)$ where the heat semigroup $P^n_t$ is ultracontractive, i.e., mapping for $t>0$ the space
$L^2(X,\meas_n)$ into $L^\infty(X,\meas_n)$.
Under this assumption, for $t>0$ one has
\[ \Delta_n u_n^k = -\lambda_n^k e^{\lambda_n^kt} P^n_t u_n^k \in L^\infty(X,\meas_n).\]
If we assume that quantitative ultracontractive bounds hold uniformly w.r.t.~$n$, then a unique regular flow $\XX_n$ relative 
to $\bb_n$ is defined and we deduce convergence in measure to the natural limit flow $\XX$, as $n \to \infty$. 
\end{example}

\part*{Appendix}
\addcontentsline{toc}{part}{Appendix}

\appendix

\section{Minimal relaxed slopes and Cheeger energy}\label{appendix:slopes}

In this section we recall basic facts about minimal relaxed slopes, Sobolev spaces and heat flow in metric measure spaces, see
\cite{AmbrosioGigliSavare13} and \cite{Gigli1} for a more systematic treatment of this topic. The Cheeger energy
$\Ch:L^2(X,\meas)\to [0,\infty]$ is the convex and lower semicontinuous functional defined as follows:
$$
\Ch(f):=\inf\left\{\liminf_{n\to\infty}\frac 12\int_X\Lipa ^2(f_n)\di\meas:\ \text{$f_n\in\Lipb(X)\cap L^2(X,\meas)$, $\|f_n-f\|_2\to 0$}\right\}. 
$$
The Sobolev space $H^{1,2}(X,\dist,\meas)$ is simply defined as the finiteness domain of $\Ch$. It can be proved
that $H^{1,2}(X,\dist,\meas)$ is Hilbert if $\Ch$ is quadratic, and reflexive if $(X,\dist)$ is doubling.
(see \cite{AmbrosioColomboDiMarino}).
 
In connection with the definition of $\Ch$, for all $f\in H^{1,2}(X,\dist,\meas)$ one can consider the collection $RS(f)$ of all functions in $L^2(X,\meas)$
larger than a weak $L^2(X,\meas)$ limit of $\Lipa (f_n)$, with $f_n\to f$ in $L^2(X,\meas)$. This collection describes a convex, closed and nonempty set, 
whose element with smallest $L^2(X,\meas)$ norm is called minimal relaxed slope and denoted by $|\rmD f|$. Because of the minimality
property, $|\rmD f|$ provides an integral representation to $\Ch$ and it is not hard to improve weak to strong convergence.  

\begin{theorem}\label{tapprox}
For all $f\in D(\Ch)$ one has
$$
\Ch(f)=\frac 12\int_X|\rmD f|^2\di\meas
$$
and there exist $f_n\in\Lipb(X)\cap L^2(X,\meas)$ with $f_n\to f$ in $L^2(X,\meas)$ and 
$\Lipa (f_n)\to |\rmD f|$ in $L^2(X,\meas)$.  In particular, if $H^{1,2}(X,\dist,\meas)$ is reflexive,
there exist $f_n\in\Lipb(X)\cap L^2(X,\meas)$ satisfying $f_n\to f$ in $L^2(X,\meas)$ and 
$|\rmD (f_n-f)|\to 0$ in $L^2(X,\meas)$. 
\end{theorem}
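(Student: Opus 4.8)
The plan is to establish the integral representation and the strong approximation simultaneously, the key structural fact being that the asymptotic Lipschitz constant is \emph{sublinear}: from $\Lip((g+h)|_{B_r(x)})\le \Lip(g|_{B_r(x)})+\Lip(h|_{B_r(x)})$ and positive homogeneity one obtains, letting $r\downarrow 0$, that $\Lipa(\sum_k\alpha_k g_k)\le\sum_k\alpha_k\Lipa(g_k)$ pointwise for every finite convex combination. First I would pick a minimizing sequence $g_n\in\Lipb(X)\cap L^2(X,\meas)$ with $g_n\to f$ in $L^2(X,\meas)$ and $\frac12\int_X\Lipa^2(g_n)\di\meas\to\Ch(f)$. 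The resulting $L^2$-bound on $\Lipa(g_n)$ lets me extract, along a subsequence, a weak limit $\Lipa(g_n)\weakto G$ in $L^2(X,\meas)$, so that $G\in RS(f)$ and weak lower semicontinuity of the norm gives $\frac12\int_X G^2\di\meas\le\Ch(f)$.

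The decisive step is to turn this weak limit into a strong one via Mazur's lemma. Applying it to $\Lipa(g_n)\weakto G$ produces convex combinations $h_n=\sum_k\alpha_{n,k}\Lipa(g_k)\to G$ strongly in $L^2(X,\meas)$, and the functions $\tilde g_n:=\sum_k\alpha_{n,k}g_k\in\Lipb(X)\cap L^2(X,\meas)$ still satisfy $\tilde g_n\to f$ in $L^2(X,\meas)$. By sublinearity $\Lipa(\tilde g_n)\le h_n$, whence $\limsup_n\frac12\int_X\Lipa^2(\tilde g_n)\di\meas\le\frac12\int_X G^2\di\meas\le\Ch(f)$; since each $\tilde g_n$ is admissible the reverse inequality holds, so $\frac12\int_X\Lipa^2(\tilde g_n)\di\meas\to\frac12\int_X G^2\di\meas=\Ch(f)$. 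Running the same Mazur argument starting from an arbitrary $H\in RS(f)$ shows $\Ch(f)\le\frac12\int_X H^2\di\meas$ for every relaxed slope; taking $H=|\rmD f|$ and combining with minimality forces $\Ch(f)=\frac12\int_X|\rmD f|^2\di\meas$ and identifies $G=|\rmD f|$, the min-norm element of the closed convex set $RS(f)$ being unique. Finally $\Lipa(\tilde g_n)\le h_n\to|\rmD f|$ together with $\int_X\Lipa^2(\tilde g_n)\di\meas\to\int_X|\rmD f|^2\di\meas$ pins down every weak limit point of $\Lipa(\tilde g_n)$ as $|\rmD f|$; weak convergence plus convergence of the $L^2$-norms then upgrades to strong convergence $\Lipa(\tilde g_n)\to|\rmD f|$ in $L^2(X,\meas)$, and relabelling $f_n:=\tilde g_n$ gives the second assertion.

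For the reflexive case I would start from the sequence $f_n$ just constructed. Since $f_n$ is Lipschitz one has $|\rmD f_n|\le\Lipa(f_n)$, and as $\Lipa(f_n)\to|\rmD f|$ in $L^2(X,\meas)$ the sequence is bounded in $H^{1,2}(X,\dist,\meas)$; reflexivity then yields a subsequence weakly convergent in $H^{1,2}(X,\dist,\meas)$, whose limit must be $f$ because weak $H^{1,2}$-convergence implies weak $L^2$-convergence and $f_n\to f$ in $L^2(X,\meas)$. A further application of Mazur's lemma provides convex combinations $\hat f_n\to f$ strongly in $H^{1,2}(X,\dist,\meas)$; since the $H^{1,2}$-norm of any $g$ dominates both $\|g\|_{L^2(X,\meas)}$ and $\bigl(\int_X|\rmD g|^2\di\meas\bigr)^{1/2}$, this means precisely $\hat f_n\to f$ in $L^2(X,\meas)$ and $|\rmD(\hat f_n-f)|\to 0$ in $L^2(X,\meas)$, with $\hat f_n\in\Lipb(X)\cap L^2(X,\meas)$ as convex combinations of such functions.

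The main obstacle is exactly the passage from weak to strong $L^2$-convergence of the slopes: it cannot follow from lower semicontinuity alone, because $RS(f)$ is defined only through weak limits, and the sublinearity of $\Lipa$ is what makes Mazur's lemma applicable and drives both the integral representation and the strong approximation. A secondary point requiring care is that in the reflexive but possibly non-Hilbertian setting I avoid invoking the Radon--Riesz property, routing the last step again through Mazur rather than through ``weak convergence plus convergence of norms''.
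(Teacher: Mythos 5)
The paper does not prove \autoref{tapprox}: it is recalled from the cited literature (\cite{Cheeger}, \cite{AmbrosioGigliSavare13}, \cite{AmbrosioColomboDiMarino}). Your argument is correct and is essentially the standard proof from those references — subadditivity of $\Lipa$ plus Mazur's lemma to upgrade the weak $L^2$ limit of the slopes of a minimizing sequence to a strong one, identification of that limit with $|\rmD f|$ via minimality, and a second Mazur argument in the (reflexive) Banach space $H^{1,2}$ for the last assertion — so there is nothing to flag beyond the routine diagonal selection of a single optimal sequence from the $\inf$-$\liminf$ definition of $\Ch$, which you elide but which is standard.
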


Most standard calculus rules can be proved, when dealing with minimal relaxed slopes. For the purposes of this paper the most relevant ones
are:
\begin{description}
\item[{\bf Locality on Borel sets.}] $|\rmD f|=|\rmD g|$ $\meas$-a.e.~on $\{f=g\}$ for all $f,\,g\in H^{1,2}(X,\dist,\meas)$;

\item[Pointwise minimality.] $|\rmD f|\leq g$ $\meas$-a.e.~for all $g\in RS(f)$;

\item[Degeneracy.] $|\rmD f|=0$ $\meas$-a.e.~on $f^{-1}(N)$ for all $f\in H^{1,2}(X,\dist,\meas)$ and all $\leb^1$-negligible $N \subset \setR$ Borel;

\item[Chain rule.] $|\rmD(\varphi\circ f)|=|\varphi'(f)||\rmD f|$ for all $f\in H^{1,2}(X,\dist,\meas)$ and all
$\varphi:\setR\to\setR$ Lipschitz with $\varphi(0)=0$.
\end{description}

Another object canonically associated to $\Ch$ and then to the metric measure structure is the heat flow $P_t$, defined as the
$L^2(X,\meas)$ gradient flow of $\Ch$, according to the Brezis-Komura theory of gradient flows of lower semicontinuous
functionals in Hilbert spaces, see for instance \cite{Brezis}. This theory provides a continuous contraction
semigroup. We shall use $P_t$ only in the case when $\Ch$ is quadratic, as a regularizing operator.
In this special case $P_t$ is also linear (and this property is equivalent to $\Ch$ being quadratic) and it is easily
seen that
$$
\lim_{t\downarrow 0} P_tf=f\qquad\text{for all $f\in H^{1,2}(X,\dist,\meas)$.}
$$

Finally, we describe the class of $\RCD(K,\infty)$ metric measure spaces of \cite{AmbrosioGigliSavare14}, where thanks to the lower
bounds on Ricci curvature even stronger properties of $P_t$ can be proved. We say that a metric measure space $(X,\dist,\meas)$
satisfying the growth bound (for some constants $c_1,\,c_2$ and some $\bar x\in X$)
\begin{equation}\label{eq:Grygorian}
\meas\bigl(B_r(\bar x)\bigr)\leq c_1 e^{c_2r^2}\qquad\forall r>0
\end{equation}
is a $\RCD(K,\infty)$ metric measure space, with $K\in\setR$, if $\Ch$ is quadratic and if, setting
$$
\Prob_2(X):=\left\{\mu\in\Prob(X):\ \int_X\dist^2(\bar x,x)\di\meas(x)<\infty\right\},
$$
the Relative Entropy Functional ${\rm Ent}(\mu):\Prob_2(X)\to\setR\cup\{\infty\}$ given by
$$
{\rm Ent}(\mu):=
\begin{cases}
\int_X\rho\log\rho\di\meas&\text{if $\mu=\rho\meas\ll\meas$,}
\\ 
\infty &\text{otherwise}
\end{cases}
$$
is $K$-convex along Wasserstein geodesics in $\Prob_2(X)$. See \cite{AmbrosioGigliSavare14} (dealing with finite reference measures),
\cite{AmbrosioGigliMondinoRajala} (for the $\sigma$-finite case) and \cite{AmbrosioGigliSavare15} for various characterizations
of this class of spaces. We quote here the following result, which essentially derives from the identification of $P_t$
as the gradient flow of ${\rm Ent}$ w.r.t.~the Wasserstein distance and the contractivity properties with respect to that distance.

\begin{proposition} In $\RCD(K,\infty)$ spaces $(X,\dist,\meas)$, for all $t>0$ the semigroup 
$P_t$ maps $L^2\cap L^\infty(X,\meas)$ to $\Cb(X)$ and if $f\in\Lip(X)\cap H^{1,2}(X,\dist,\meas)$ one has
$$
\Lipa (P_t f)\leq e^{-Kt} \sqrt{P_t|\rmD f|^2}\qquad\text{pointwise in $X$}.
$$
\end{proposition}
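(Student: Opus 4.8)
The plan is to obtain both statements from the \emph{weak Bakry--Émery gradient estimate}
$$
|\rmD P_t f|^2\leq e^{-2Kt}\,P_t|\rmD f|^2\qquad\text{$\meas$-a.e.~in $X$,}
$$
valid for all $f\in H^{1,2}(X,\dist,\meas)$, and then to upgrade it to the pointwise/continuous form. First I would establish this estimate from the curvature assumption itself. Since $\Ch$ is quadratic, $P_t$ is linear and, by the identification recalled above, its adjoint action $\mathcal H_t$ on probability measures coincides with the gradient flow of ${\rm Ent}$ in $(\Prob_2(X),W_2)$. The $K$-convexity of ${\rm Ent}$ along Wasserstein geodesics yields the contraction $W_2(\mathcal H_t\mu,\mathcal H_t\nu)\leq e^{-Kt}W_2(\mu,\nu)$, and by Kuwada's duality between Wasserstein contraction and $L^2$ gradient bounds (the case $q=2$) this contraction is \emph{equivalent} to the displayed inequality.

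Next I would treat the Lipschitz case and the continuity statement together. If $f\in\Lip(X)\cap H^{1,2}(X,\dist,\meas)$ then $|\rmD f|^2\leq\Lip(f)^2$ $\meas$-a.e.; the growth bound \eqref{eq:Grygorian} guarantees stochastic completeness, hence $P_t1=1$ and $P_t|\rmD f|^2\leq\Lip(f)^2$, so the gradient estimate gives $|\rmD P_t f|\leq e^{-Kt}\Lip(f)$ $\meas$-a.e. The Sobolev-to-Lipschitz property of $\RCD$ spaces then furnishes a representative of $P_t f$ with global Lipschitz constant at most $e^{-Kt}\Lip(f)$, which in particular is continuous. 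For a general $f\in L^2\cap L^\infty(X,\meas)$ the continuity of $P_tf$ is obtained from the dual picture: the $W_2$-contraction applied to Dirac masses shows that $x\mapsto\mathcal H_t\delta_x$ is $e^{-Kt}$-Lipschitz from $(X,\dist)$ into $(\Prob_2(X),W_2)$, and this regularity of the heat kernel $x\mapsto\mathcal H_t\delta_x=p_t(x,\cdot)\meas$ propagates to $x\mapsto P_tf(x)=\int f\,p_t(x,\cdot)\di\meas$, giving $P_t(L^2\cap L^\infty)\subset\Cb(X)$.

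Finally, to localise the $\meas$-a.e.~estimate into the claimed \emph{pointwise} inequality with the asymptotic Lipschitz constant, I would use that, once $P_t$ maps $L^\infty$ into $\Cb(X)$, the right-hand side $x\mapsto P_t|\rmD f|^2(x)$ has a continuous representative. Fix $x_0\in X$; on each ball $B_r(x_0)$ the gradient estimate gives $|\rmD P_tf|\leq e^{-Kt}\sup_{B_r(x_0)}\sqrt{P_t|\rmD f|^2}$ $\meas$-a.e., and the Sobolev-to-Lipschitz property applied on $B_r(x_0)$ bounds $\Lip\bigl(P_tf\restr_{B_r(x_0)}\bigr)$ by the same quantity. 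Letting $r\downarrow0$ and invoking the continuity of $P_t|\rmD f|^2$ yields $\Lipa(P_tf)(x_0)\leq e^{-Kt}\sqrt{P_t|\rmD f|^2}(x_0)$, as desired.

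I expect the main obstacle to be precisely this last passage together with the continuity statement: turning a gradient bound that holds only up to $\meas$-negligible sets into an estimate for $\Lipa$, a genuinely pointwise object defined through suprema over small balls, requires both the regularising property $P_t(L^2\cap L^\infty)\subset\Cb(X)$ and a \emph{local} form of the Sobolev-to-Lipschitz property. In a general (non-locally-compact, non-doubling) $\RCD(K,\infty)$ space these are delicate and constitute the technical heart of the argument; they are exactly the points for which the statement is quoted from the $\RCD$ literature.
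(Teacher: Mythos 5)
The paper itself offers no proof of this proposition: it is quoted from the $\RCD$ literature (essentially \cite{AmbrosioGigliSavare14} and \cite{AmbrosioGigliSavare15}) with only the one-line remark that it derives from the identification of $P_t$ with the gradient flow of the entropy and the associated Wasserstein contractivity. Your outline reconstructs precisely that route --- $K$-convexity of ${\rm Ent}$ gives $W_2(\mathcal H_t\mu,\mathcal H_t\nu)\le e^{-Kt}W_2(\mu,\nu)$ for the dual semigroup $\mathcal H_t$, Kuwada duality converts this into the $\meas$-a.e.\ Bakry--\'Emery estimate, and Sobolev-to-Lipschitz plus continuity of $P_t|\rmD f|^2$ localise it to the pointwise bound on $\Lipa(P_tf)$ --- so you are following the intended argument rather than a genuinely different one.

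There is, however, one genuine gap: your derivation of $P_t(L^2\cap L^\infty(X,\meas))\subset\Cb(X)$. Knowing that $x\mapsto\mathcal H_t\delta_x$ is $e^{-Kt}$-Lipschitz from $(X,\dist)$ into $(\Prob_2(X),W_2)$ only yields continuity of $x\mapsto\int\varphi\di\mathcal H_t\delta_x$ for \emph{continuous} $\varphi$ of suitable growth; for a general bounded Borel $f$, $W_2$-continuity of the kernel gives no control on $x\mapsto\int f\,p_t(x,\cdot)\di\meas$, so the regularity does not ``propagate'' as claimed (and the attempt to bootstrap via $P_tf=P_{t/2}(P_{t/2}f)$ is circular, since one would first need $P_{t/2}f$ Lipschitz). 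The standard fix is the reinforced Bakry--\'Emery inequality
$$
2\Bigl(\int_0^te^{2Ks}\di s\Bigr)\,|\rmD P_tf|^2\le P_t(f^2)-(P_tf)^2\le \|f\|_\infty^2\qquad\text{$\meas$-a.e.~in $X$,}
$$
again a consequence of the same EVI/$K$-convexity machinery: it shows that $P_tf$ has bounded minimal relaxed slope, and the Sobolev-to-Lipschitz property of $\RCD(K,\infty)$ spaces then produces the Lipschitz, hence bounded continuous, representative. With that in place $P_t|\rmD f|^2$ does admit a continuous representative, and your final limiting argument over shrinking balls goes through, provided you justify the \emph{local} Sobolev-to-Lipschitz step on $B_r(x_0)$ using that $\supp\meas$ is a length space (equivalently, use that on length spaces $\Lipa g$ is the upper semicontinuous envelope of the pointwise slope, which here is dominated by the continuous function $e^{-Kt}\sqrt{P_t|\rmD f|^2}$).
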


\section{An approximation result}\label{appendix:approximation}

In this section we improve \autoref{tapprox} by showing that the approximating sequence can be chosen
in the ``canonical'' algebra $\Algebra$ generated by the distance functions, and even in the subalgebra $\Algebra_\bs$. 
The deep reason why this is possible is the fact that this is the class of functions appearing in the Hopf-Lax formula, see \eqref{eq:Hopflax}.

\begin{theorem}\label{tapprox_refined}
For all $f\in D(\Ch)$ there exist $f_n\in\Algebra_\bs$ with $f_n\to f$ in $L^2(X,\meas)$ and 
$\Lipa (f_n)\to |\rmD f|$ in $L^2(X,\meas)$. In particular, if $H^{1,2}(X,\dist,\meas)$ is reflexive, one has
the existence of $f_n\in\Algebra_\bs$ satisfying $f_n\to f$ in $L^2(X,\meas)$ and 
$|\rmD (f_n-f)|\to 0$ in $L^2(X,\meas)$. 
\end{theorem}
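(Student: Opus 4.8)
The plan is to upgrade \autoref{tapprox} from $\Lipb(X)\cap L^2(X,\meas)$ to the countable algebra $\Algebra_\bs$ by a clean reduction. First I would isolate the following approximation of a single bounded Lipschitz function as the heart of the matter: for every $g\in\Lipb(X)\cap L^2(X,\meas)$ there exist $h_k\in\Algebra_\bs$ with $h_k\to g$ in $L^2(X,\meas)$ and $\limsup_k\int_X\Lipa^2(h_k)\di\meas\le\int_X|\rmD g|^2\di\meas$. Granting this, the theorem follows by a diagonal argument: \autoref{tapprox} produces $g_m\in\Lipb(X)\cap L^2(X,\meas)$ with $g_m\to f$ and $\Lipa(g_m)\to|\rmD f|$ in $L^2$, and replacing each $g_m$ by a suitable $h_{m,k(m)}\in\Algebra_\bs$ yields $f_m\to f$ in $L^2$ with $\limsup_m\|\Lipa(f_m)\|_{L^2}\le\||\rmD f|\|_{L^2}$ (using $\||\rmD g_m|\|_{L^2}\le\|\Lipa(g_m)\|_{L^2}\to\||\rmD f|\|_{L^2}$). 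Any weak $L^2$ limit $G$ of $\Lipa(f_m)$ lies in $RS(f)$, hence $G\ge|\rmD f|$ and $\|G\|_{L^2}\ge\||\rmD f|\|_{L^2}$; the $\limsup$ bound forces $G=|\rmD f|$, and weak convergence together with convergence of the norms upgrades to strong convergence $\Lipa(f_m)\to|\rmD f|$ in $L^2$.

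For the reduced statement I would invoke the Hopf--Lax formula \eqref{eq:Hopflax}, which is exactly where the truncated distance functions generating $\Algebra$ (see \eqref{eq:setD}) enter. The naive McShane representation $g(x)=\inf_{y\in D}\{g(y)+\Lip(g)\dist(x,y)\}$ and its finite infima have asymptotic Lipschitz constant comparable to the \emph{global} constant $\Lip(g)$, which is far too large; the quadratic regularization
\[
Q_tg(x)=\inf_{y\in X}\Bigl\{g(y)+\tfrac{1}{2t}\dist^2(x,y)\Bigr\}
\]
instead has local slope governed by the distance $\dist(x,y_t(x))\sim t\,|\rmD g|(x)$ to its minimizer, so the Hopf--Lax slope estimates give $Q_tg\to g$ in $L^2$ together with $\limsup_{t\downarrow0}\int_X\Lipa^2(Q_tg)\di\meas\le\int_X|\rmD g|^2\di\meas$. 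I would then discretise: by density of $D$ and continuity of $y\mapsto\dist(\cdot,y)$ one may take the infimum over $y\in D$, and on a fixed bounded set the quadratic kernel equals $\tfrac1{2t}\min\{\dist(\cdot,y),k\}^2$ for $k$ large, which belongs to $\Algebra$ by stability under products; finite infima (a lattice operation) stay in $\Algebra$ and decrease to $Q_tg$, and multiplication by a Lipschitz cut-off assembled from truncated distances confines the support to land in $\Algebra_\bs$. The routine bookkeeping here is to realise, on bounded sets, the additive levels $g(y)$ and the cut-off within $\Algebra$ through rational coefficients and differences of truncated distance functions.

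The hard part will be checking that none of these operations inflates the asymptotic Lipschitz constant in the limit, i.e.\ that the limits in the discretisation parameter $N$, the truncation level $k$, the cut-off radius $R$ and finally $t$ can be arranged so that $\int_X\Lipa^2$ of the $\Algebra_\bs$-approximants stays below $\int_X|\rmD g|^2$. Here I would exploit the local structure of $Q_tg$: near a point $x$ only the cones based at points $y$ at distance $\sim t\,|\rmD g|(x)$ are active, so for $N$ large the finite infimum coincides locally with $Q_tg$ outside a set whose contribution to $\int\Lipa^2$ is negligible, while the cut-off error is controlled in $L^2$ by $\int_{\dist(\cdot,\bar x)>R}(|g|^2+|\rmD g|^2)\di\meas\to0$ since $g,|\rmD g|\in L^2$. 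A careful diagonalisation of $N,k,R,t$ then delivers the desired $h_k$.

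Finally, the reflexive ``in particular'' statement follows from the first part by convexity. That part shows $\Algebra_\bs$ is energy dense: each $f\in D(\Ch)$ admits $f_n\in\Algebra_\bs$ with $f_n\to f$ in $L^2$ and $\Ch(f_n)\to\Ch(f)$. When $H^{1,2}(X,\dist,\meas)$ is reflexive, boundedness of $\Ch(f_n)$ together with $L^2$-convergence forces $f_n\weakto f$ weakly in $H^{1,2}$, and Mazur's lemma provides convex combinations converging strongly, i.e.\ with $|\rmD(\cdot-f)|\to0$ in $L^2$. Since $\Algebra$ is a vector space over $\setQ$ stable under the relevant operations, approximating the convex coefficients by rationals keeps these combinations in $\Algebra_\bs$ while perturbing the $H^{1,2}$ norm arbitrarily little, which concludes the proof.
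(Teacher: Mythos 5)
Your reduction and diagonal argument in the first paragraph are fine (weak $L^2$ limit points of $\Lipa(f_m)$ lie in $RS(f)$, hence dominate $|\rmD f|$, and the $\limsup$ bound on norms upgrades weak to strong convergence), and the Mazur/rational-coefficients argument for the reflexive case is also fine. The genuine gap is in the ``heart of the matter'': the claim that the Hopf--Lax slope estimates give $\limsup_{t\downarrow 0}\int_X\Lipa^2(Q_tg)\di\meas\le\int_X|\rmD g|^2\di\meas$ for a single $g\in\Lipb(X)\cap L^2(X,\meas)$. This is not a consequence of the standard Hopf--Lax bounds. What those bounds give pointwise is $\Lipa(Q_tg)(x)\lesssim D^+(x,t)/t$ together with the subsolution property $\tfrac{\di}{\di t}Q_tg+\tfrac12\Lipa^2(Q_tg)\le 0$; integrating in $t$ and in $\meas$ and letting $t\downarrow 0$ controls the time-averaged energy $\tfrac1t\int_0^t\int_X\Lipa^2(Q_sg)\di\meas\di s$ only by $\int_X|\nabla^- g|^2\di\meas$, where $|\nabla^- g|$ is the (descending) slope of $g$. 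Since $|\rmD g|\le|\nabla^- g|\le\Lipa(g)$ with possibly strict inequality on a set of positive measure, this does not yield the required bound. In fact your reduced statement is precisely the assertion $\Ch_\Algebra(g)\le\Ch(g)$, i.e.\ the identity $\Ch_\Algebra=\Ch$, which \emph{is} the theorem; assuming it follows from ``Hopf--Lax slope estimates'' begs the question.

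Bridging the gap between the slope (or asymptotic Lipschitz constant) and the minimal \emph{relaxed} slope is exactly what the paper's proof is organised around, and it cannot be done by a direct pointwise construction from the values of $g$: any approximant built from $g$ via Hopf--Lax, McShane or lattice operations has its $\Lipa$ controlled only by slopes of $g$, never directly by $|\rmD g|$. The paper instead runs the indirect scheme of \cite{AmbrosioGigliSavare13}: define the relaxed energy $\Ch_\Algebra$ and its slope $|\rmD\cdot|_\Algebra\ge|\rmD\cdot|$, run the $L^2$ gradient flow of $\Ch_\Algebra$, compare the entropy dissipation rate computed along this flow with the dissipation estimate obtained from the minimal $2$-weak upper gradient $|\rmD\cdot|_w\le|\rmD\cdot|$ via Lisini's superposition theorem and Kuwada's lemma (this is where \autoref{lemma:subsol} and hence the Hopf--Lax formula genuinely enter --- to prove that $\Lipa$ of the \emph{discretised} Hopf--Lax infima, which lie in $\Algebra$, satisfies the Hamilton--Jacobi subsolution inequality), and conclude $|\rmD\cdot|_\Algebra=|\rmD\cdot|_w\le|\rmD\cdot|\le|\rmD\cdot|_\Algebra$. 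Your proposal would need to be replaced by, or supplemented with, this identification argument; as written, the key inequality is asserted rather than proved.
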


\begin{proof} The proof follows closely the strategy developed in \cite{AmbrosioGigliSavare13}. We first build a variant of Cheeger's energy by
restricting the approximation to functions in $\Algebra_\bs$; this construction provides
a new minimal relaxed slope, that we denote by $|\rmD f|_\Algebra$ and that we prove to coincide with $|\rmD f|$ passing through
the notion of minimal $2$-weak upper gradient $|\rmD f|_w$. Let us outline the main steps, quoting them also from \cite{AmbrosioColomboDiMarino}, 
whose context is closer to the one of the present paper
(indeed, in \cite{AmbrosioGigliSavare13} more general metric measure structures, with possibly infinite distances, are allowed).

\noindent
{\bf Step 1.} (Construction of $\Ch_\Algebra$, $|\rmD f|_\Algebra$ and calculus rules). For all $f\in L^2(X,\meas)$ we define
$$
\Ch_\Algebra(f) := \inf\left\{\liminf_{n\to\infty}\frac 12\int_X\Lipa ^2(f_n)\di\meas:\ \text{$f_n\in\Algebra_\bs$, $f_n\to f$ in $L^2(X,\meas)$}\right\}. 
$$
It is immediate to check that $\Ch_\Algebra$ is a convex and $L^2(X,\meas)$-lower semicontinuous functional in $L^2(X,\meas)$,
with a dense domain (here we use that $\Algebra_\bs$ is dense in $L^2(X,\meas)$). It is also obvious by the definition that
\begin{equation}\label{eq:zuri1}
\Ch\leq\Ch_\Algebra
\end{equation}
where $\Ch$ is Cheeger's functional considered in the previous section.

As in  \cite[Section~4]{AmbrosioColomboDiMarino}, for all $f\in D(\Ch_\Algebra)$ one can define $RS_\Algebra(f)$ as the collection of all functions
$g\in L^2(X,\meas)$ which are larger than a weak $L^2(X,\meas)$ limit of $\Lipa (f_n)$, with $f_n\in\Algebra_\bs$ and $f_n\to f$ in $L^2(X,\meas)$. By
weak $L^2$-compactness, this set is not empty and it is not hard to show that it is closed and convex. Its element with minimal
$L^2(X,\meas)$ norm is defined to be $|\rmD f|_\Algebra$. Since $RS_\Algebra(f)\subset RS(f)$ for all $f\in D(\Ch_\Algebra)$, we can 
use the pointwise minimality property of $|\rmD f|$ to refine
\eqref{eq:zuri1} to a pointwise inequality:
\begin{equation}\label{eq:zuri11}
|\rmD f|\leq |\rmD f|_\Algebra\qquad\text{$\meas$-a.e.~in $X$ for all $f\in D(\Ch_\Algebra)$.}
\end{equation}

As we already mentioned in the previous section, in connection with $|\rmD f|$,
several properties of $|\rmD f|_\Algebra$ stem from the variational definition, in particular, 
\begin{itemize}
\item[(a)] there exist $f_n\in\Algebra_\bs$ with $f_n\to f$ in $L^2(X,\meas)$ and $\Lipa (f_n)\to |\rmD f|_\Algebra$ strongly in
$L^2(X,\meas)$;
\item[(b)] locality on Borel sets, pointwise minimality, degeneracy and chain rule;
\item[(c)] for all $g\in D(\Ch_\Algebra)$ where $\Ch_\Algebra$ has nonempty subdifferential one has
$$
-\int_X f\Delta_\Algebra g\di\meas\leq \int_X |\rmD f|_\Algebra|\rm D g|_\Algebra\di\meas,
$$
where $-\Delta_\Algebra g$ is the element with smallest norm in the subdifferential of $\Ch_\Algebra(g)$, with equality if
$f=\phi(g)$ with $\phi$ Lipschitz, nondecreasing, $\phi(0)=0$.
\end{itemize}

Notice that the proof of properties (b), which rests on the decomposability property of $RS_\Algebra$
$$
\chi g_1+(1-\chi)g_2\in RS_\Algebra(f)\qquad\text{for all $g_1,\,g_2\in RS_\Algebra(f)$, $\chi\in\Algebra$, $0\leq\chi\leq 1$,}
$$
depends on the fact that the class of functions $\chi\in\Algebra$ with values in $[0,1]$ is dense in $L^\infty(X,[0,1])$, w.r.t.~convergence in $\meas$-measure. Here the assumption that $\Algebra$ is a lattice plays a role. 

Because of property (a), the proof of the theorem will be achieved if we show that, for all $f\in D(\Ch)$, one has $f\in D(\Ch_\Algebra)$ and
$|\rmD f|_\Algebra=|\rmD f|$ $\meas$-a.e.~in $X$.  Notice that, because of \eqref{eq:zuri1} and \eqref{eq:zuri11}, one has
$D(\Ch_\Algebra)\subset D(\Ch)$ and $|\rmD f|\leq |\rmD f|_\Algebra$ $\meas$-a.e.~in $X$ for all $f\in D(\Ch_\Algebra)$,
so our main concern will be to prove the converse inclusion and inequality.

For the proof of this statement we can easily reduce ourselves to the case when the support of $\meas$ has finite diameter and $\meas(X)<\infty$. Indeed,
notice that $\Algebra$ contains cut-off functions $h_n:X\to [0,1]$, $n\geq 1$, with $\Lip(h_n)\leq 2$, $h_n\equiv 1$ in $B_{n-1}(\bar x)$ and
$h_n\equiv 0$ in $X\setminus B_n(\bar x)$. 
Now, fix $f\in D(\Ch)$. Denoting by $\meas_n$ the measures $\chi_{B_n(\bar x)}\meas$, and by $\Ch_n$, $|\rmD f|_n$, $\Ch_{\Algebra,n}$,
$|\rmD f|_{\Algebra,n}$ the corresponding relaxed energies and slopes, we obviously have $f\in D(\Ch_n)$ and
$|\rmD f|_n\leq |\rm D f|$ $\meas_n$-a.e.~in $X$. If we are able to show that $f\in D(\Ch_{\Algebra,n})$ and
$|\rmD f|_{\Algebra,n}\leq |\rmD f|_n$ $\meas_n$-a.e.~in $X$ for all $n$, we are then able to find functions 
$f_{k,n}\in \Algebra_\bs$ with $f_{k,n}\to f$ in $L^2(X,\meas_n)$ as $k\to\infty$ 
$$
\limsup_{k\to\infty}\int_X\Lipa ^2(f_{k,n})\di\meas_n\leq\int_X |\rmD f|^2\di\meas_n.
$$
Using the inequality $\Lipa (h g)\leq \Lipa (g)+\Lip(h)g\chi_{\supp (1-h)}$, with $h:X\to [0,1]$ and $\Lip(h)\leq 1$, it is now immediate
to build, by a diagonal argument, $f_{k_n,n}h_n\to f$ in $L^2(X,\meas)$ with
$$
\limsup_{n\to\infty}\int_X\Lipa ^2(f_{k_n,n}h_n)\di\meas=\limsup_{n\to\infty}\int_X\Lipa ^2(f_{k_n,n}h_n)\di\meas_n\leq
\int_X|\rmD f|^2\di\meas.
$$
Since $f_{k_n,n}h_n\in\Algebra_\bs$, this proves that $f\in D(\Ch_\Algebra)$; in addition, by lower semicontinuity, 
we obtain $\int_X|\rmD f|_\Algebra^2\di\meas\leq\int_X|\rmD f|^2\di\meas$,
whence the equality of the relaxed slopes follows.

So, from now on, we assume $\meas(X)<\infty$ and (possibly replacing $X$ by the support of $\meas$) that $(X,\dist)$ has finite
diameter.

\noindent
{\bf Step 2.} (Gradient flow of $\Ch_\Algebra$ and computation of the energy dissipation rate). Thanks to the convexity and lower semicontinuity
properties of $\Ch_\Algebra$ we can define a (possibly nonlinear) continuous semigroup ${\cal P}_t$ in $L^2(X,\meas)$ by considering the
$L^2(X,\meas)$ gradient flow of $\Ch_\Algebra$, as we did in the previous section for $\Ch$. Notice that it is the density of $D(\Ch_\Algebra)$
that enables to the define ${\cal P}_t$ on the whole of $L^2(X,\meas)$.
Specifically, $t\mapsto {\cal P}_tf$ is locally absolutely continuous in $(0,\infty)$ and satisfies
$$
\didi t {\cal P}_tf=\Delta_\Algebra( {\cal P}_tf)\qquad\text{$\leb^1$-a.e.~in $(0,\infty)$.}
$$
Then, using the chain rule, an approximation argument (since $z\mapsto z\log z$ is not Lipschitz) and property (c)
one can prove the entropy dissipation formula
\begin{equation*}
-\didi {t}\int_X f_t\log f_t\di\meas=\int_{\{f_t>0\}}\frac{|\rmD f_t|_\Algebra^2}{f_t}\di\meas
\end{equation*}
with $f_t={\cal P}_tf$, for all $f\in L^2(X,\meas)$ nonnegative with $\int_X f\log f\di\meas<\infty$. 

Notice that the sign condition
is preserved by ${\cal P}_t$, as well as the integral of $f$, namely $\int_X {\cal P}_t f\di\meas=\int_X f\di\meas$. This last
property crucially depends on the finiteness assumption of $\meas$ (the finiteness assumption can be relaxed to \eqref{eq:Grygorian}, still
sufficient for the mass-preserving property). 
Because of this, if $f$ is a probability
density also ${\cal P}_t f$ is a probability density and we shall consider the path of measures
\begin{equation}\label{eq:path}
\mu_t=f_t\meas={\cal P}_tf\meas\qquad t\geq 0.
\end{equation}

\noindent
{\bf Step 3.} (Minimal $2$-weak upper gradient $|\rmD f|_w$ and energy dissipation estimate with $|\rmD f|_w$).
We say that $g\in L^2(X,\meas)$ is a $2$-weak upper gradient if the inequality
\begin{equation}\label{eq:Zurich5}
|f(\gamma(1))-f(\gamma(0))|\leq \int_0^1g(\gamma(s))|\dot\gamma|(s)\di s
\end{equation}
holds $\ppi$-a.e.~for all 2-test plans $\ppi$. Recall that a $2$-test plan is a probability measure $\ppi$ in $C([0,1];X)$
concentrated on $AC^2([0,1];X)$ and satisfying, for some $C=C(\ppi)\geq 0$,
\begin{equation}\label{eq:Zurich2}
(\ev_t)_\#\ppi\leq C\meas\qquad\forall t\in [0,1].
\end{equation}
Because of the non-concentration condition \eqref{eq:Zurich2}, the property of being a $2$-weak upper gradient is
independent of the choice of the representative of $g$ in $L^2(X,\meas)$; furthermore, the class of $2$-weak upper gradients is
a convex closed set and we can identify, as we did for $|\rmD f|_\Algebra$, the element with minimal $L^2(X,\meas)$ 
norm. This distinguished element, the so-called minimal $2$-weak upper gradient, will be denoted by $|\rmD f|_w$. The stability property
of $2$-weak upper gradients (a variant of the so-called Fuglede's lemma), the fact that the asymptotic Lipschitz
constant is a ($2$-weak) upper gradient and \autoref{tapprox} then give 
\begin{equation}\label{eq:Zurich3}
|\rmD f|_w\leq |\rmD f|\qquad\text{$\meas$-a.e.~in $X$ for all $f\in D(\Ch)$.}
\end{equation}
For all probability density $f_0\in L^2(X,\meas)$ having a $2$-weak upper gradient,
by integrating \eqref{eq:Zurich5} with $f=f_0$ and $g=|\rmD f_0|_w$ with respect to a suitable test plan provided by Lisini's superposition
theorem \cite{Li07}, we obtain an estimate on the entropy dissipation rate involving $|\rmD f_0|$ (see \cite[Lemma~5.17]{AmbrosioGigliSavare13}):
\begin{equation}\label{eq:Zurich6}
\int_X f_0\log f_0\di\meas-\int_X f_t\log f_t\di\meas\leq
\frac12\int_0^t\int_{\{f_0>0\}}\frac{|\rmD f_0|_w^2}{f_0^2}f_s\di\meas\di s
+\frac12\int_0^t|\dot\mu|^2(s)\di s
\end{equation}
for all $t>0$. Here $\mu_t=f_t\meas$ is any curve
in $AC^2_\loc([0,\infty);(\Prob(X),W_2))$ and $|\dot\mu|(t)$ is its metric derivative.

Using this sharper energy dissipation estimate we will prove that $f\in D(\Ch_\Algebra)$ for all $f$ having a $2$-weak upper gradient
as well as the inequality $|\rmD f|_w= |\rmD f|_\Algebra$ in the next, final, step. In combination with \eqref{eq:Zurich3} this
provides the converse inequality to \eqref{eq:zuri11} and then the result.

\noindent
{\bf Step 4.} (The equality $|\rmD f|_\Algebra= |\rmD f|_w$). Given $h\in L^2(X,\meas)$ having a 2-weak upper
gradient, we want to prove that $h\in D(\Ch_\Algebra)$ and that $|\rmD h|_\Algebra= |\rmD h|_w$ $\meas$-a.e.~in $X$.
By the local property of this statement, we can consider with no loss of generality only 
$h\in L^\infty(X,\meas)$ with $\int_X h^2\di\meas=1$, $h\geq c>0$ and we set $f=h^2$. Then we consider the
path of measures $\mu_t=f_t\meas$ in \eqref{eq:path}, noticing that still $f_t\geq c^2>0$.

The refined subsolution property \eqref{eq:refined_subs} of \autoref{lemma:subsol} can be used together with the integration by parts formula
(d) to obtain the so-called  Kuwada's lemma (see for instance \cite[Lemma~34]{AmbrosioColomboDiMarino}), which yields
$\mu_t\in AC^2([0,\infty);(\Prob(X),W_2))$ and 
\begin{equation*}
|\dot\mu|(s)\leq \int_X \frac{|\rmD f_s|_\Algebra^2}{f_s}\di\meas\qquad\text{for $\leb^1$-a.e.~$s\in (0,\infty)$.}
\end{equation*}
This, in combination with \eqref{eq:Zurich6} with $f_0=f$, gives
\begin{equation*}
\int_X f\log f\di\meas-\int_X f_t\log f_t\di\meas\leq
\frac12\int_0^t\int_X\frac{|\rmD f|_w^2}{f^2}f_s\di\meas\di s
+\frac12\int_0^t\int_X\frac{|\rmD f_s|_\Algebra^2}{f_s}\di\meas\di s.
\end{equation*}
Hence we deduce
\[
\int_0^t 4\Ch_\Algebra(\sqrt{f_s})\di s=\frac12\int_0^t\int_X\frac{|\rmD f_s|_\Algebra^2}{f_s}\di\meas\di s
\leq\frac12\int_0^t\int_X\frac{|\rmD f|_w^2}{f^2}f_s\di\meas\di s.
\]
Letting $t\downarrow 0$, taking into account the $L^2_\loc$-lower
semicontinuity of $\Ch_\Algebra$ and the fact that $f_t\to f$ in 
$L^2(X,\meas)$, we get $\Ch_\Algebra(h)=\Ch_\Algebra(\sqrt f)\leq\liminf_{t\downarrow
0}\frac1t\int_0^t\Ch_\Algebra(\sqrt{f_s})\di s$. On the other hand, since 
$|\rmD f|_w^2/f^2$ belongs to $L^1(X,\meas)$, the $L^2(X,\meas)$ convergence
of $f_s$ to $f$ (which entails weak$^*$ convergence in $L^\infty(X,\meas)$) yields
$$\int_X\frac{|\rmD f|^2}f\di\meas=\lim_{t\downarrow 0}
\frac 1t \int_0^t\int_X\frac{|\rmD f|_w^2}{f^2}f_s\di\meas\di s.
$$
In summary, we proved that $h\in D(\Ch_\Algebra)$ and that
\[
2\int_X|\rmD h|_\Algebra^2\di\meas\leq
\frac12\int_X\frac{|\rmD f|^2_w}{f}\di\meas=2\int_X|\rmD h|_w^2\di\meas,
\]
where we used the chain rule once once. Taking the inequality $|\rmD h|\leq |\rmD h|_\Algebra$ into account, this
integral inequality proves the coincidence $\meas$-a.e.~in $X$.
\end{proof}

\begin{lemma}[Subsolution property]\label{lemma:subsol}
If $\meas(X)<\infty$, $X$ has finite diameter and $f\in\Cb(X)$, the function
\begin{equation}\label{eq:Hopflax}
Q_t(f):=\inf_{y\in X}f(y)+\frac{1}{2t}\dist^2(x,y)
\end{equation}
is locally Lipschitz in $(0,\infty)\times X$ and
\begin{equation}\label{eq:refined_subs}
\didi{t}Q_tf+\frac{1}{2}|\rmD Q_tf|_\Algebra^2\leq 0 
\qquad\text{$\leb^1\times\meas$-a.e.~in $(0,\infty)\times X$.}
\end{equation}
\end{lemma}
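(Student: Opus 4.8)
The plan is to reduce the statement to the classical analysis of the Hopf--Lax semigroup $Q_t$ and then to upgrade the resulting Hamilton--Jacobi subsolution inequality from the asymptotic Lipschitz constant to the relaxed $\Algebra$-slope $|\rmD\cdot|_\Algebra$. Since $f\in\Cb(X)$, $X$ has finite diameter and $\meas(X)<\infty$, every $Q_t f$ is bounded and, for fixed $t>0$, Lipschitz with $\Lip(Q_t f)\le\operatorname{diam}(X)/t$ (by comparing the defining infima at two points and using $\dist\le\operatorname{diam}(X)$); moreover $t\mapsto Q_t f(x)$ is non-increasing and locally Lipschitz on $(0,\infty)$. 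Combining the locally uniform (in $t$) spatial Lipschitz bound with the Lipschitz dependence in $t$ yields the joint local Lipschitz regularity of $(t,x)\mapsto Q_t f(x)$ on $(0,\infty)\times X$ claimed in the statement.

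Next I would recall, following \cite{AmbrosioGigliSavare13, AmbrosioColomboDiMarino}, the two pointwise ingredients of the Hopf--Lax theory. Writing $D^{+}(x,t)$ and $D^{-}(x,t)$ for the supremum and infimum of $\dist(x,y)$ over the points $y$ that almost attain the infimum defining $Q_t f(x)$, the monotonicity $D^{+}(x,t)\le D^{-}(x,s)$ for $t<s$ shows that $D^{+}(x,\cdot)=D^{-}(x,\cdot)$ outside a countable set, and gives the time-derivative formula
\[
\didi{t}Q_t f(x)=-\frac{D^{\pm}(x,t)^2}{2t^2}\qquad\text{for $\leb^1$-a.e.\ $t\in(0,\infty)$.}
\]
The second ingredient is the spatial estimate $\Lipa(Q_t f)(x)\le D^{+}(x,t)/t$, obtained by the usual near-minimizer comparison and exploiting the upper semicontinuity of $x\mapsto D^{+}(x,t)$ to pass from a one-point slope bound to the asymptotic Lipschitz constant on small balls. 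Since $D^{+}=D^{-}$ for all but countably many $t$, by Fubini for $\leb^1\times\meas$-a.e.\ $(t,x)$ one gets
\[
\didi{t}Q_t f(x)+\frac12\Lipa^2(Q_t f)(x)\le -\frac{D^{+}(x,t)^2}{2t^2}+\frac{D^{+}(x,t)^2}{2t^2}=0,
\]
which is the desired subsolution inequality with $\Lipa$ in place of $|\rmD\cdot|_\Algebra$.

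Finally I would replace $\Lipa$ by the relaxed $\Algebra$-slope. For each fixed $t>0$ the function $Q_t f$ is bounded and Lipschitz on the bounded space $X$, hence it belongs to $D(\Ch_\Algebra)$ and admits $\Lipa(Q_t f)$ as a relaxing function; by the pointwise minimality property recorded in Step~1(b) this yields $|\rmD Q_t f|_\Algebra\le\Lipa(Q_t f)$ $\meas$-a.e. As the left-hand side of the claim is monotone in the slope term, combining this with the previous display gives
\[
\didi{t}Q_t f+\frac12|\rmD Q_t f|_\Algebra^2\le \didi{t}Q_t f+\frac12\Lipa^2(Q_t f)\le 0
\]
$\leb^1\times\meas$-a.e.\ in $(0,\infty)\times X$, which is \eqref{eq:refined_subs}.

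The main obstacle is twofold. On the classical side, the careful verification of the $\leb^1$-a.e.\ time derivative via the monotonicity of $D^{\pm}$, and especially the asymptotic Lipschitz estimate $\Lipa(Q_t f)\le D^{+}/t$, are delicate. On the side specific to this paper, one must check, without circularity, that $Q_t f\in D(\Ch_\Algebra)$ and that $\Lipa(Q_t f)$ is an admissible $\Algebra$-relaxing function; I would do this by approximating $Q_t f$ directly with elements of $\Algebra_\bs$ built from the truncated distance functions entering its definition, together with $\setQ$-approximation of the parameters. This is exactly the structural reason, alluded to before the statement, why the Hopf--Lax regularization is compatible with the algebra $\Algebra$.
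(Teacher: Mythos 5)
Your reduction to the classical Hopf--Lax analysis is fine as far as the inequality
\[
\didi{t}Q_tf+\tfrac12\Lipa^2(Q_tf)\le 0
\]
is concerned, but the last step --- upgrading $\Lipa(Q_tf)$ to $|\rmD Q_tf|_\Algebra$ via the claim $|\rmD Q_tf|_\Algebra\le\Lipa(Q_tf)$ $\meas$-a.e.\ --- is a genuine gap, and it is exactly the point where the lemma is nontrivial. Pointwise minimality only gives $|\rmD Q_tf|_\Algebra\le h$ for $h\in RS_\Algebra(Q_tf)$, i.e.\ for $h$ dominating a weak $L^2$ limit of $\Lipa(g_n)$ with $g_n\in\Algebra_\bs$ and $g_n\to Q_tf$ in $L^2(X,\meas)$. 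For the unrestricted relaxed slope one may take $g_n=Q_tf$ itself, but $Q_tf\notin\Algebra_\bs$ in general, so you must exhibit approximations \emph{inside} $\Algebra_\bs$ whose asymptotic Lipschitz constants converge weakly in $L^2$ to something dominated by $\Lipa(Q_tf)$. Your proposed candidates, the discretizations $Q^n_tf=\min_{1\le i\le n}\bigl(f(x_i)+\dist^2(\cdot,x_i)/(2t)\bigr)$, give no such control: one only gets the crude bound $\Lipa(Q^n_tf)\le\max_{i\le n}\dist(\cdot,x_i)/t$, and there is no argument showing that the weak limit points of $\Lipa(Q^n_tf)$ are bounded by $\Lipa(Q_tf)$ (the asymptotic Lipschitz constant is not upper semicontinuous under monotone or uniform convergence of the functions). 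Note also that if an inequality of the form $|\rmD g|_\Algebra\le\Lipa(g)$ were available for bounded Lipschitz $g$, then $\Ch_\Algebra\le\Ch$ would follow at once from the definitions, and that is precisely the hard half of \autoref{tapprox_refined}, which this lemma is designed to prove; so asserting it here for $Q_tf$ is essentially circular.

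The paper's proof sidesteps the comparison entirely: it establishes the subsolution inequality for each discretization $Q^n_tf\in\Algebra$, with $\Lipa(Q^n_tf)$ on the left-hand side, and then passes to the limit in the \emph{integrated} form of the inequality against test functions $\zeta(t,x)=\chi(t)\psi(x)$, using the monotone convergence $Q^n_tf\downarrow Q_tf$ for the time-derivative term and the lower semicontinuity
\[
\int_X\psi\,|\rmD g|_\Algebra^2\di\meas\le\liminf_{n\to\infty}\int_X\psi\,\Lipa^2(g_n)\di\meas
\qquad\text{($g_n\in\Algebra$, $g_n\to g$ in $L^2(X,\meas)$)}
\]
for the slope term. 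In this way $|\rmD Q_tf|_\Algebra$ appears directly in the limit inequality without ever being compared pointwise to $\Lipa(Q_tf)$. To repair your argument you should adopt this limiting scheme (and the elementary subsolution property of the discrete Hopf--Lax formula) rather than the pointwise domination $|\rmD Q_tf|_\Algebra\le\Lipa(Q_tf)$.
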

\begin{proof}
Let $D=\{x_k\}$ be as in \eqref{eq:setD}.
Let $n\geq 1$ and define 
$$
Q^n_tf(x):=\min_{1\leq i\leq n}f(x_i)+\frac{1}{2t}\dist^2(x,x_i)
\qquad x\in X,\,\,t>0.
$$
Then $x\mapsto Q^n_t f(x)\in\Algebra$ for all $t>0$, $Q^n_tf$ is locally Lipschitz in $(0,\infty)\times X$ and
$$
\didi{t}Q^n_tf+\frac{1}{2}\Lipa ^2(Q^n_tf)\leq 0 
\qquad\text{$\leb^1\times\meas$-a.e.~in $(0,\infty)\times X$.}
$$
The proof of this inequality is elementary, see for instance \cite[Theorem~14]{AmbrosioColomboDiMarino}. 

In order to obtain \eqref{eq:refined_subs}, we notice that the density of $D$ in $X$ yields that
the family $Q^n_t f$ monotonically converges to $Q_t f$ from above. 
Given $\zeta(t,x)=\chi(t)\psi(x)$, with $\chi\in C^1_c(0,T)$ nonnegative and $\psi\in \Cb(X)$, $\inf\psi>0$, we can pass to the limit in the inequality
$$
\int_0^T\int_X \left(- Q^n_tf\didi{t}\zeta+\frac \zeta 2 \Lipa ^2(Q^n_t f)\right)\di\meas\di t\leq 0
$$
to get
$$
\int_0^T\int_X \left(- Q_tf\didi{t}\zeta +\frac \zeta 2 |\rmD Q_t f)|_\Algebra^2\right)\di\meas\di t\leq 0
$$
which provides, by the arbitrariness of $\zeta$, \eqref{eq:refined_subs}.
In this limiting argument we used the lower semicontinuity property 
$$
\int_X \psi |\rmD g|_\Algebra^2\di\meas\leq\liminf_{n\to\infty}\int_X \psi\Lipa ^2(g_n)\di\meas_n
$$
whenever $g_n\in\Algebra$ and $g_n\to g$ in $L^2(X,\meas)$, which is a simple consequence of the $\meas$-a.e.\ minimality
property of the minimal relaxed slope.
\end{proof}

\end{document}